% !TeX spellcheck = en_US
% !TEX program = pdflatex
\documentclass[11pt,b5paper,notitlepage]{article}
\usepackage[b5paper, margin={0.5in,0.65in}]{geometry}

\usepackage{amsmath,amscd,amssymb,amsthm,mathrsfs,amsfonts,layout,indentfirst,graphicx,caption,mathabx, stmaryrd,appendix,calc,imakeidx,upgreek} % mathabx for \widecheck
\usepackage[dvipsnames]{xcolor}
\usepackage{palatino}  %template
\usepackage{slashed} % Dirac operator
\usepackage{mathrsfs} % Enable using \mathscr
\usepackage{extarrows} % long equal sign, \xlongequal{blablabla}
\usepackage{enumitem} % enumerate label change e.g. [label=(\alph*)]  shows (a) (b) 

\usepackage{fancyhdr} % date in footer

\usepackage{ulem}  % \uline  underline break line   also    \uwave

\usepackage{tikz-cd}
\usepackage[nottoc]{tocbibind}   % Add  reference to ToC

\makeindex

% The following set up the line spaces between items in \thebibliography
\usepackage{lipsum}
\let\OLDthebibliography\thebibliography
\renewcommand\thebibliography[1]{
	\OLDthebibliography{#1}
	\setlength{\parskip}{0pt}
	\setlength{\itemsep}{2pt} 
}

\allowdisplaybreaks  %allow aligns to break between pages
\usepackage{latexsym}
\usepackage{chngcntr}
\usepackage[colorlinks,linkcolor=blue,anchorcolor=blue, linktocpage,
%pagebackref
]{hyperref}
\hypersetup{ urlcolor=cyan,
	citecolor=[rgb]{0,0.5,0}}

\counterwithin{figure}{subsection}

\pagestyle{plain}

\captionsetup[figure]
{
	labelsep=none	
}

\theoremstyle{definition}
\newtheorem{df}{Definition}[section]

\newtheorem{rem}[df]{Remark}

\theoremstyle{plain}
\newtheorem{thm}[df]{Theorem}

\newtheorem{pp}[df]{Proposition}

\newtheorem{lm}[df]{Lemma}

\newtheorem{Mthm}{Main Theorem}
 % "letter-numbered" theorems
 % "letter-numbered" theorems

%\substack   multiple lines under sum
%\underset{b}{a}   b is under a

% Remind: \overline{L_0}

\newcommand{\fk}{\mathfrak}
\newcommand{\mc}{\mathcal}
\newcommand{\wtd}{\widetilde}
\newcommand{\wht}{\widehat}

\newcommand{\ovl}{\overline}

 %transpose

\newcommand{\End}{\mathrm{End}} %endomorphism
\newcommand{\id}{\mathbf{1}}
\newcommand{\Hom}{\mathrm{Hom}}

\newcommand{\ev}{\mathrm{ev}}
\newcommand{\coev}{\mathrm{coev}}
\newcommand{\opp}{\mathrm{opp}}
\newcommand{\Rep}{\mathrm{Rep}}

\newcommand{\Diffp}{\mathrm{Diff}^+}
\newcommand{\Diff}{\mathrm{Diff}}
\newcommand{\PSU}{\mathrm{PSU}(1,1)}
\newcommand{\Vir}{\mathrm{Vir}}

\newcommand{\Span}{\mathrm{Span}}

\newcommand{\bk}[1]{\langle {#1}\rangle}
\newcommand{\GA}{\mathscr G_{\mathcal A}}

\newcommand{\scr}{\mathscr}

\newcommand{\Jtd}{\widetilde{\mathcal J}}

\newcommand{\im}{\mathbf{i}}

\newcommand{\RepA}{\mathrm{Rep}(\mathcal A)}

\newcommand{\RepfA}{\mathrm{Rep}^{\mathrm f}(\mathcal A)}

\newcommand{\RepL}{\mathrm{Rep}^{\mathrm{L}}}
\newcommand{\HomL}{\mathrm{Hom}^{\mathrm{L}}}
\newcommand{\EndL}{\mathrm{End}^{\mathrm{L}}}

\newcommand{\mbb}{\mathbb}

\newcommand{\blt}{\bullet}

\newcommand{\Cbb}{\mathbb C}

\newcommand{\Zbb}{\mathbb Z}

\newcommand{\Rbb}{\mathbb R}

\newcommand{\UPSU}{\widetilde{\mathrm{PSU}}(1,1)}
\newcommand{\Sbb}{{\mathbb S}}
\newcommand{\Gc}{\mathscr G_c}
\newcommand{\Obj}{\mathrm{Obj}}

\usepackage{tipa} % wierd symboles e.g. \textturnh

\usepackage{tipx}

\numberwithin{equation}{section}

\title{On a Connes Fusion Approach to Finite Index Extensions of Conformal Nets}
\author{{\sc Bin Gui}
	%\\
	%{\small Department of Mathematics, Rutgers university}\\
	%{\small bin.gui@rutgers.edu}
}
\date{}
\begin{document}\sloppy % avoid stretch into margins
	\pagenumbering{arabic}
	%\pagenumbering{gobble}
	%\newpage
	%\setcounter{page}{1}
	\setcounter{section}{-1}

	\maketitle

%%%%%%%%%%%%%%%%%%%%%%%%%%%%%%%%%%%%%%%%%%%%%%%5
\newcommand\blfootnote[1]{%
	\begingroup
	\renewcommand\thefootnote{}\footnote{#1}%
	\addtocounter{footnote}{-1}%
	\endgroup
}
% Footnote without marker

%%%%%%%%%%%%%%%%%%%%%%%%%%%%%%%%%%%%%%%%%%%%%

\begin{abstract}
In the framework of Connes fusion, we discuss the relationship between the  (non-necessarily local or irreducible) finite index extensions $\mc B$ of an irreducible local M\"obius covariant net $\mc A$ on $\Sbb^1$ and the $C^*$-Frobenius algebras $Q$ in $\RepA$ (the $C^*$-tensor category  of  $\mc A$-modules). We explain how to prove the 1-1 correspondence between $Q$ and $\mc B$ in this framework, and show that the $C^*$-category $\RepL(Q)$ of left $Q$-modules is isomorphic to the one $\Rep(\mc B,\mc A)$ of ``$(\mc B,\mc A)$-modules". When $\mc B$ is an irreducible local M\"obius extension, this reduces to a braided $C^*$-tensor isomorphism of the categories of dyslectic $Q$-modules and $\mc B$-modules. We also establish a 1-1 correspondence between the (non-necessarily unitary)  isomorphisms of finite index extensions of $\mc A$ and the isomorphisms of $C^*$-Frobenius algebras as defined in \cite{NY18}.
\end{abstract}

\tableofcontents

%\vspace{-0.5cm}
%\blfootnote{Last major revision:  2021.6}

%%%%%%%%%%%%%%%%%%%%%%%%%%%%%%%%5
%\makeatletter
%\newcommand*{\toccontents}{\@starttoc{toc}}
%\makeatother
%\toccontents

% title and table of contents same page

%%%%%%%%%%%%%%%%%%%%%%%%%%%%%

\section{Introduction}

Q-systems ($\approx$ $C^*$-Frobenius algebras) were introduced by Longo \cite{Lon94} and are powerful tools for the study of local and non-local finite index extensions of conformal nets, full and boundary conformal field theory, and defects \cite{LR95,KL04,KLPR07,LR04,LR09,CKL13,BKL15,BKLR15,BKLR16,BDH19}. Finite index extensions of conformal nets were investigated mainly  in the framework of DHR (Doplicher-Haag-Roberts \cite{DHR71,DHR74}) superselection theory \cite{FRS89,FRS92},  but not often in the Connes fusion setting \cite{Con80,Sau83,Was98,BDH17}. Though the Connes fusion approach  to extensions of factors is well established (cf. for instance \cite{Mas97,Yuan19,GY20,CHPJP21}), its generalization to extensions of \textit{nets of factors} (for instance, extensions of conformal nets) is not straightforward.   

It is a main goal of this article to give detailed proofs  of some basic results on the relationship between $C^*$-Frobenius algebras and finite index extensions of conformal nets in the Connes fusion framework.   Moreover, the results we shall prove are slightly more general than those already exist in the literature. Specifically:
\begin{itemize}
\item We do not assume the conformal nets are completely rational.
\item Our study is not limited to dualizable representations.
\item The extensions are not assumed to be irreducible.
\item M\"obius covariance is assumed only for the smaller net $\mc A$, but not initially for the (finite index) extension $\mc B$; rather, it will be shown to hold as a consequence (see Thm. \ref{lb5} and \ref{lb11}).
\end{itemize}
Indeed, the only finiteness condition we assume is that of the extension $\mc A\subset\mc B$.

Let $\mc A$ be any irreducible local M\"obius covariant net (irreducible M\"obius net for short), and let $\RepA$ be the braided $C^*$-tensor category of (normal) $\mc A$-modules defined by Connes fusion. For each $C^*$-Frobenius algebra $Q$ in $\RepA$, we have constructed a finite index extension $\mc B_Q$ in \cite{Gui21b} (see also Thm. \ref{lb5}). Here, we show that any finite index extension arises in this way	(Thm. \ref{lb11}).  In the DHR superselection setting, these results are due to  \cite[Thm. 4.9]{LR95} (see also \cite[Sec. 5.2]{BKLR15}).

We show that the $C^*$-tensor category $\RepL(Q)$ of left $Q$-modules is isomorphic to the one $\Rep(\mc B_Q,\mc A)$ of (naturally defined) $(\mc B_Q,\mc A)$-modules (Main Thm. \ref{lb12}). When $\mc B_Q$ is a local M\"obius extension (equivalently, when $Q$ is a M\"obius covariant $\mc A$-module and is commutative with trivial twist operator, cf. Thm. \ref{lb25}), this isomorphism reduces to an isomorphism of braided $C^*$-tensor categories $\Rep^0(Q)\simeq\Rep(\mc B_Q)$ where $\Rep^0(Q)$ is the category of dyslectic $Q$-modules, and $\Rep(\mc B_Q)$ is the category of usual $\mc B_Q$-modules. This is Main Thm. \ref{lb24}. In the DHR setting, this result was proved when $\mc A$ is completely rational and the categories are assumed to contain only dualizable $\mc A$-modules (cf. \cite{Mug10} and \cite[Prop. 6.4]{BKL15}). Our result does not assume these conditions.

Our third main result is not as familiar as the previous ones. We introduce a natural definition of isomorphisms of extensions of $\mc A$. These isomorphisms are in general not implemented by unitary operators (unless when the extensions are irreducible).  We construct a 1-1 correspondence between (1) the isomorphisms $\varphi:\mc B^a\rightarrow\mc B^b$ (with respect to $\mc A$)  of finite index extensions of $\mc A$ and (2) the isomorphisms $V:Q^a\rightarrow Q^b$ of $C^*$-Frobenius algebras in $\RepA$ in the sense of \cite{NY18} (cf. Thm. \ref{lb13} or Main Thm. \ref{lb26}). Such  $V$ is not only an isomorphism of algebra objects in the usual sense; it also satisfies that $V^*V$ is a homomorphism of left $Q^a$-modules. We call such $V$ a \textit{left isomorphism} in this article. This result is closely related to choosing two different faithful normal conditional expectations for a finite index subfactor $\mc N\subset\mc M$. See the long Remark \ref{lb15}.

Many important topics are not discussed in this article. For instance, when $\mc B_Q$ is local, we do not show that the $C^*$-category equivalence $\RepL(Q)\simeq\Rep(\mc B_Q,\mc A)$ is an equivalence of $C^*$-tensor categories (though we have proved this for $\Rep^0(Q)\simeq \Rep(\mc B_Q)$), since the theory of categorical extensions for $(\mc B_Q,\mc A)$-modules has not been established yet. In particular, we are not yet ready to translate the tensor-categorical results on orbifold CFT in \cite{Mug05} to the Connes fusion framework. (Such translation would be useful for the orbifold VOA-conformal net correspondence.) Also, to keep the article relatively short, we do not discuss either full or boundary CFT, or  fermionic conformal nets here. But many crucial  techniques for exploring these topics through Connes fusion are already given in the article.

\subsection*{Acknowledgment}

I would like to thank Andr\'e Henriques and David Penneys for helpful discussions, and especially Andr\'e Henriques for encouraging me to write this note.

\section{Backgrounds}\label{lb18}

All Hilbert spaces are assumed to be separable.	For a Hilbert space $\mc H$, $\bk{\cdot|\cdot}$ denotes its inner product whose first variable is linear and the second one antilinear. $\End(\mc H_i,\mc H_2)$ denotes the set of bounded linear operators from a Hilbert space $\mc H_i$ to another one $\mc H_j$. We follow the usual convention that if $M$ is a von Neumann algebra on a Hilbert space then $M'$ denotes its commutant.

If $\mc P,\mc Q,\mc R,\mc S$ are Hilbert spaces, $A,B,C,D$ are bounded linear operators whose domains and codomains are indicated by the following diagram,
\begin{equation}
	\begin{tikzcd}
		\mc P \arrow[r, "C"] \arrow[d, "A"'] & \mc Q \arrow[d, "B"]\\
		\mc R\arrow[r, "D"] &\mc S
	\end{tikzcd}
\end{equation}
we say that this diagram \textbf{commutes adjointly} if $DA=BC$ and $CA^*=B^*D$. If either $A,B$ or $C,D$ are unitary, then commuting implies adjoint commuting.

\subsection*{M\"obius nets and conformal nets}

Let $\mc J$ be the set of all non-empty non-dense open intervals in the unit circle $\mbb S^1$. If $I\in\mc J$, then $I'\in\mc J$ is by definition the interior of the complement of $I$. The group $\Diffp(\mbb S^1)$ of orientation-preserving diffeomorphisms of $\mbb S^1$ contains the subgroup $\PSU$ of M\"obius transforms of $\mbb S^1$. We let $\UPSU$ and $\scr G$ be respectively the universal coverings of $\PSU$ and $\Diffp(\mbb S^1)$. 

If $I\in\mc J$, we let $\Diff(I)$ be the subgroup of all $g\in\Diffp(\mbb S^1)$ acting as identity on $I'$. We let $\scr G(I)$ be the connected component of the inverse image of $\Diff(I)$ under $\scr G\rightarrow\Diffp(\Sbb^1)$ that contains the identity.

By a  \textbf{M\"obius net} $\mc A$, or more precisely, a local M\"obius covariant net, we mean that for each $I\in\mathcal J$ there is a von Neumann algebra $\mathcal A(I)$ acting on a fixed separable Hilbert space $\mathcal H_0$, such that the following conditions hold:
\begin{enumerate}[label=(\alph*)]
\item (Isotony) If $I_1\subset I_2\in\mathcal J$, then $\mathcal A(I_1)$ is a von Neumann subalgebra of $\mathcal A(I_2)$.
\item (Locality) If $I_1,I_2\in\mathcal J$ are disjoint, then $\mathcal A(I_1)$ and $\mathcal A(I_2)$ commute.
\item (M\"obius covariance) We have a strongly continuous  unitary representation $U$ of $\PSU$ on $\mc H_0$ such that for any $g\in\PSU, I\in\mc J,$, 
\begin{align*}
	U(g)\mc A(I)U(g)^*=\mc A(gI).
\end{align*}
\item (Positivity of energy) The generator $L_0$ of the rotation subgroup $\varrho$  is positive.
\item There exists a $\PSU$-invariant unit vector $\Omega\in\mc H_0$, called the \textbf{vacuum vector} which is  cyclic under the action of $\bigvee_{I\in\mathcal J}\mathcal A(I)$ (the von Neumann algebra generated by all $\mathcal A(I)$).
\end{enumerate}

$\mc A$ is \textbf{additive}, namely, if $\fk I$ is a set of intervals with union $J\in\mc J$, then $\mc A(J)$ is the von Neumann algebra generated by $\bigcup_{I\in\fk I}\mc A(I)$. $\mc A$ satisfies \textbf{Haag duality}, i.e., that $\mc A(I')=\mc A(I)'$ for each $I\in \mc J$. Moreover, we have the \textbf{Reeh-Schlieder property}, which says that $\Omega$ is  cyclic separating vector for each $\mc A(I)$. We say that $\mc A$ is \textbf{irreducible} if $\Cbb \id$ are the only operators on $\mc H_0$ commuting with the action of $\mc A(I)$ for all $I\in\mc J$. This is equivalent to that $\Cbb\Omega$ are the only $\PSU$-invariant vectors in $\mc H_0$, and also equivalent to that each $\mc A(I)$ is a type III$_1$-factor.  See \cite[Sec. 1]{GL96} for details.

A M\"obius net $\mc A$ is called a \textbf{conformal (covariant) net} if the representation $U$ of $\PSU$ on $\mc H_0$ can be extended to a strongly continuous projective unitary representation $U$ of $\Diffp(\mbb S^1)$ on $\mathcal H_0$, such that for any $g\in \Diffp(\mbb S^1),I\in\mathcal J$, and any representing element $V\in\mathcal U(\mathcal H_0)$ of $U(g)$,
\begin{align*}
	V\mathcal A(I)V^*=\mathcal A(gI).
\end{align*}
Moreover, if $g\in\Diff(I)$ then $V\in\mc A(I)$.

If $\mc A$ is a M\"obius net, a (normal untwisted) \textbf{$\mc A$-module} is a pair $(\pi_i,\mc H_i)$, or simply $\mc H_i$, such that $\mc H_i$ is a Hilbert space, and that for any $I\in\mc J$ there is an (automatically normal)  representation $\pi_{i,I}$ of $\mc A(I)$ on $\mc H_i$ such that $I\subset J\Rightarrow \pi_{i,J}|_{\mc A(I)}=\pi_{i,I}$. We use symbols $\mc H_i,\mc H_j,\mc H_k,\dots$ for $\mc A$-modules. For $x\in\mc A(I)$ and $\xi\in\mc H_i$, $\pi_{i,I}(x)\xi$ will be written as $x\xi$ when the context is clear. An $\mc A$-module is called \textbf{M\"obius covariant}, if there is a strongly continuous unitary representation $U_i$ of $\UPSU$ on $\mc H_i$ such that $U(g)\pi_{i,I}(x)U(g)^*=\pi_{i,gI}(U(g)xU(g)^*)$ for each $g\in\UPSU$, $I\in\mc J$, and $x\in\mc A(I)$. Clearly $\mc H_0$ is an $\mc A$-module, called the \textbf{vacuum module} of $\mc A$.

If $\mc A$ is a conformal net, any $\mc A$-module $\mc H_i$ is automatically M\"obius covariant. Indeed, a stronger property holds. Let $\mc U(\mc H_0)$ be the topological group of unitary operators on $\mc H$, equipped with the strong operator topology. Let $\GA$ be the subgroup of all $(g,V)\in\scr G\times\mc U(\mc H_0)$ such that $V$ represents $U(g)$. Then we have a central extension
\begin{align*}
	1\rightarrow \Sbb^1\rightarrow \GA\rightarrow\scr G\rightarrow 1.
\end{align*}
$\GA$ naturally acts on $\mc H_0$. As explained in \cite[Thm. 2.2]{Gui21a}, the results in \cite{Hen19} imply that   each $\mc A$-module $\mc H_i$ is \textbf{conformal covariant}, which means that there is a (necessarily unique) strongly continuous unitary representation $U_i$ of $\GA$ on $\mc H_i$ such that
\begin{align}
U_i(g)=\pi_{i,I}(U(g))	\label{eq1}
\end{align}
for each $I\in\mc J$ and each $g$ belonging to 
\begin{align*}
\GA(I):=\text{the inverse image of $\scr G(I)$ under $\GA\rightarrow\scr G$}.	
\end{align*}
It follows easily (cf. the proof of \cite[Cor. 2.6]{Gui21a}) that   $U_i(g)\pi_{i,I}(x)U_i(g)^*=\pi_{i,gI}(gx)$ for each $I\in\mc J,x\in\mc A(I),g\in\GA$.

Thus, if $\mc A$ is conformal and $\mc H_i$ is an $\mc A$-module, we fix the continuous representation of $\UPSU$ to be the one inherited from that of $\GA$.

If $\mc A$ is an irreducible conformal net, then $\mc A$ contains the subnet  $I\in\mc J\mapsto \{U(g):g\in\Diff(I)\}''$ acting on the subspace generated by $\Omega$. This (M\"obius) subnet is clearly irreducible.  In that case, it is the Virasoro net $\Vir_c$ of some central charge $c$ (cf. \cite[Thm. A.1]{Car04}). Let $\Gc:=\scr G_{\Vir_c}$ The map $(g,V)\in\GA\mapsto (g,V|_{\ovl{\Span(\Vir_c\Omega)}})\in\Gc$ is clearly a bijective contiuous group homomorphism. That its inverse is continuous follows from the fact that $\mc H_0$ is a conformal covariant $\Vir_c$-module. Thus, we shall make the identification of topological groups
\begin{align*}
	\Gc=\GA,\qquad \Gc(I)=\GA(I)	
\end{align*}
so that the central extension of $\scr G$ is only depending on the central charge $c$ of $\mc A$.

For any element $g$ of $\Diffp(\Sbb^1)$ or $\scr G$ or $\scr G_c$, we shall abbreviate $U_i(g)$ or $U(g)$ to $g$ when the context is clear.

For a M\"obius net $\mc A$, we let $\Rep(\mc A)$ be the $C^*$-category of $\mc A$-modules. The space of morphisms between two modules $\mc H_i,\mc H_j$ is
\begin{align*}
\Hom_{\mc A}(\mc H_i,\mc H_j)=\{T\in\Hom(\mc H_i,\mc H_j):T\pi_{i,I}(x)=\pi_{j,I}(x)T\text{ for each $I\in\mc J,x\in\mc A(I)$}\}.	
\end{align*}
Hence $\mc A$ is irreducible if and only if $\mc H_0$ is an irreducible $\mc A$-module.

\section*{Categorical extensions of $\mc A$}

In the remaining part of this article, \uwave{we always assume that $\mc A$ is an irreducible M\"obius net}.

There are two equivalent ways to make $\Rep(\mc A)$ a braided $C^*$-tensor category: the Doplicher-Haag-Roberts (DHR) superselection theory \cite{DHR71,DHR74,FRS89,FRS92}, and the Connes fusion \cite{Con80,Sau83,Was98,BDH17}. In this article, we focus on the latter. 

The braided $C^*$-tensor category $(\Rep(\mc A),\boxtimes,\ss)$ is uniquely determined by the existence of a categorical extension of $\mc A$ on $\Rep(\mc A)$. To recall the definition, we first introduce some terminology.

If $I\in\mc J$, an arg function $\arg_I:I\rightarrow\Rbb$ is a continuous function whose value at each $e^{\im\theta}\in I$ is in $\theta+2\pi\Zbb$. $(I,\arg_I)$ is called an \textbf{arg-valued interval}. Let
\begin{align*}
\Jtd=\text{the set of all arg-valued intervals}.	
\end{align*}
One may regard each $\wtd I\in\Jtd$ as an interval in the universal cover $\Rbb$ of $\Sbb^1$. Then the actions of $\PSU$ and $\Diffp(\Sbb^1)$ lift to actions of $\UPSU$ and $\scr G$ on $\Rbb$. Thus $\UPSU$ and $\scr G$ (and hence $\Gc$) act on $\Jtd$.  

We say that $\wtd J=(J,\arg_J)$ is \textbf{clockwise} to $\wtd I=(I,\arg_I)$ if   $\arg_I(z)-2\pi<\arg_J(\zeta)<\arg_I(z)$ for each $z\in I,\zeta\in J$. In particular, $I\cap J=\emptyset$. We mean
\begin{align*}
\wtd I	\subset\wtd J\quad\Leftrightarrow\quad \text{$I\subset J$ and $\arg_J|_I=\arg_I$}.
\end{align*}
In this case we say $\wtd J$ contains $\wtd I$. The \textbf{clockwise complement} of $\wtd I$ is defined to be
\begin{align*}
	\wtd I'=\text{the largest arg-valued interval clockwise to $\wtd I$}.
\end{align*} 
$\wtd I$ is called the \textbf{anticlockwise complement} of $\wtd I'$.

For each $\mc A$-module $\mc H_i$ and each $I\in\mc J$, we let
\begin{align*}
\mc H_i(I)=\Hom_{\mc A(I')}(\mc H_0,\mc H_i)\cdot\Omega
\end{align*}
where $\Hom_{\mc A(I')}(\mc H_0,\mc H_i)$ is the set of bounded operators from $\mc H_0$ to $\mc H_i$ intertwining the actions of $\mc A(I')$. Note that by Haag duality, $\mc H_0(I)=\mc A(I)\Omega$. In general,  $\mc H_i(I)$ is a dense subspace of $\mc H_i$. This is because every (normal) representation of $\mc A(I)$ (and in particular $\mc H_i$) is unitarily equivalent to a unitary subrepresentation of a direct sum of $\mc H_0$. (In fact, since $\mc A(I)$ is type III, every nonzero separable representation of $\mc A(I)$ is unitarily equivalent to $\mc H_i$.)

\begin{df}\label{lb23}
By a (closed and vector-labeled) \textbf{categorical extension} $\scr E=(\mc A,\Rep(\mc A),\boxtimes,\mc H)$ (where $\mc H$ denotes the association $I\mapsto \mc H_i(I)$ for each module $\mc H_i$), we mean the following: $\boxtimes$ is a $*$-bifunctor such that $(\RepA,\boxtimes)$ is a  $C^*$-tensor category.\footnote{This means in particular that the associators and the unitors are unitary isomorphisms, and for the morphisms we have $(F\boxtimes G)^*=F^*\boxtimes G^*$. If $\Rep(\mc A)$ is equipped with a braid structure such that the braiding isomorphisms are unitary, we say $\Rep(\mc A)$ is a braided $C^*$-tensor category.} (We suppress the associative isomorphisms and the unitors.)  Moreover,  $\scr E$ associates to any  $\mc H_i,\mc H_k\in\Obj(\Rep(\mc A))$ and any $\wtd I\in\Jtd,\fk \xi\in\mc H_i(I)$ bounded linear operators
\begin{gather*}
	L(\xi,\wtd I)\in\Hom_{\mc A(I')}(\mc H_k,\mc H_i\boxtimes\mc H_k),\\
	R(\xi,\wtd I)\in\Hom_{\mc A(I')}(\mc H_k,\mc H_k\boxtimes\mc H_i),
\end{gather*}
such that the following conditions are satisfied:
\begin{enumerate}[label=(\alph*)]
\item (Isotony) If $\wtd I_1\subset\wtd I_2\in\Jtd$, and $\xi\in\mc H_i(I_1)$, then $L(\xi,\wtd I_1)=L(\xi,\wtd I_2)$, $R(\xi,\wtd I_1)=R(\xi,\wtd I_2)$ when acting on any  $\mc H_k\in\Obj(\Rep(\mc A))$.
\item (Functoriality) If $\mc H_i,\mc H_j,\mc H_{j'}\in\Obj(\RepA)$, $\wtd I\in\Jtd$, $G\in\Hom_{\mc A}(\mc H_j,\mc H_{j'})$,  $\xi\in\mc H_i(I)$, and $\eta\in\mc H_j$, then
\begin{align}
	(\id_i\boxtimes G)L(\xi,\wtd I)\eta=L(\xi,\wtd I)G\eta,\qquad (G\boxtimes \id_i)R(\xi,\wtd I)\eta=R(\xi,\wtd I)G\eta.
\end{align}
\item (State-field correspondence) For any $\mc H_i\in\Obj(\RepA)$, under the identifications $\mc H_i=\mc H_i\boxtimes\mc H_0=\mc H_0\boxtimes\mc H_i$ defined by the unitors, the relation
\begin{align}
	L(\xi,\wtd I)\Omega=R(\xi,\wtd I)\Omega=\xi\label{eq14}
\end{align}
holds for any $\wtd I\in\Jtd,\xi\in\mc H_i(I)$. It follows immediately that when acting on $\mc H_0$, $L(\xi,\wtd I)$ equals $R(\xi,\wtd I)$ and is independent of $\arg_I$.
\item (Density of fusion products) If $\mc H_i,\mc H_k\in\Obj(\RepA),\wtd I\in\Jtd$, then the set $L(\mc H_i(I),\wtd I)\mc H_k$ spans a dense subspace of $\mc H_i\boxtimes\mc H_k$, and $R(\mc H_i(I),\wtd I)\mc H_k$ spans a dense subspace of $\mc H_k\boxtimes\mc H_i$. (Indeed, they span the full space $\mc H_i\boxtimes\mc H_k$ and $\mc H_k\boxtimes\mc H_i$ respectively.)
\item (Locality) For any $\mc H_k\in\Obj(\RepA)$, disjoint $\wtd I,\wtd J\in\Jtd$ with $\wtd I$ anticlockwise to $\wtd J$, and any $\xi\in\mc H_i(I),\eta\in\mc H_j(J)$, the following diagram commutes adjointly.
\begin{equation}
\begin{tikzcd}
\quad \mc H_k\quad \arrow[rr,"{R(\eta,\wtd J)}"] \arrow[d, "{L(\xi,\wtd I)}"'] &&\quad \mc H_k\boxtimes\mc H_j\quad \arrow[d, "{L(\xi,\wtd I)}"]\\
\mc H_i\boxtimes\mc H_k\arrow[rr,"{R(\eta,\wtd J)}"] &&\mc H_i\boxtimes\mc H_k\boxtimes\mc H_j
\end{tikzcd}
\end{equation}
\item (Braiding) There is a unitary linear map $\ss_{i,j}:\mc H_i\boxtimes\mc H_j\rightarrow\mc H_j\boxtimes \mc H_i$  for any $\mc H_i,\mc H_j\in\Obj(\RepA)$, such that  
\begin{align}
	\ss_{i,j} L(\xi,\wtd I)\eta=R(\xi,\wtd I)\eta\label{eq24}
\end{align}
whenever $\wtd I\in\Jtd,\xi\in\mc H_i(I)$, $\eta\in\mc H_j$.
\end{enumerate}
\end{df}

The above $\ss_{ij}$ is necessarily an $\mc A$-module isomorphism making $(\Rep(\mc A),\boxtimes,\ss)$ a braided $C^*$-tensor category. Moreover, such categorical extensions over the $C^*$-category $\RepA$ exist and are unique.  In particular, if we have another categorical extension $(\mc A,\RepA,\boxdot,\mc H)$ which determines a braiding $\sigma$, then $(\RepA,\boxtimes,\ss)\simeq(\RepA,\boxdot,\sigma)$. See Sec. 3.1-3.4, especially Thm. 3.4 and 3.10 of \cite{Gui21a}.\footnote{Although \cite{Gui21a} only discusses irreducible conformal nets, any result irrelavent to conformal covariance also holds for irreducible M\"obius nets. 
%Note that we have assumed all $\mc A$-modules to be M\"obius covariant. Then for any M\"ovius covariant modules $\mc H_i,\mc H_j$, the M\"obius covariance of $\mc H_i\boxtimes\mc H_j$ follows from the argument in \cite[Sec. B]{Gui21b}. In particular, the action of $g\in\UPSU$ on $\mc H_i\boxtimes\mc H_j$ is defined to satisfy $gL(\xi,\wtd I)\eta=L(g\xi,\wtd I)g\eta$ for all $\wtd I\in\Jtd,\xi\in\mc H_i(I),\eta\in\mc H_j$.
} In other words, the existence of the $L$ and $R$ operations satisfying the above axioms characterizes the braided $C^*$-tensor structure.

$\scr E$ was constructed in \cite{Gui21a} using Connes fusion. For a brief explanation of the construction, see \cite[Sec. A]{Gui21b}.

We give some useful facts that will be used later in this article. First, if the irreducible M\"obius net $\mc A$ is conformal, then $\scr E$ is \textbf{conformal covariant}, which means that for each $\mc H_i\in\Obj(\RepA)$, $\wtd I\in\Jtd$, $\xi\in\mc H_i(I)$, and $g\in\Gc$, there is a vector $g\xi g^{-1}\in\mc H_i(gI)$ such that
\begin{align}
L(g\xi g^{-1},g\wtd I)=gL(\xi,\wtd I)g^{-1},\qquad R(g\xi g^{-1},g\wtd I)=gR(\xi,\wtd I)g^{-1}\label{eq17}
\end{align}
hold when acting on any $\mc A$-module. (Cf. \cite[Thm. 3.13]{Gui21a}.)

Now we do not assume $\mc A$ to be conformal.  

By \cite[Rem. 2.2]{Gui21b}, for any $\wtd I=(I,\arg_I)\in\Jtd$, $x\in\mc A(I)$, and any $\mc A$-module $\mc H_i$,
\begin{align}
L(x\Omega,\wtd I)|_{\mc H_i}=R(x\Omega,\wtd I)|_{\mc H_i}=\pi_{i,I}(x).	\label{eq5}
\end{align}
Moreover, if $\mc H_j$ is an $\mc A$-module, and if $\xi\in\mc H_i(I),\eta\in\mc H_j(J)$, then (by locality and state-field correspondence)
\begin{align}
	L(\xi,\wtd I)\eta=R(\eta,\wtd J)\xi.\label{eq25}
\end{align}

The \textbf{functoriality} of $\scr E$ can be generalized to that for every $F\in\Hom_{\mc A}(\mc H_i,\mc H_{i'})$, $G\in\Hom_{\mc A}(\mc H_j,\mc H_{j'})$,  $\xi\in\mc H_i(I)$, and $\eta\in\mc H_j$, then
\begin{align}
	(F\boxtimes G)L(\xi,\wtd I)\eta=L(F\xi,\wtd I)G\eta,\qquad (G\boxtimes F)R(\xi,\wtd I)\eta=R(F\xi,\wtd I)G\eta.\label{eq2}
\end{align}
See \cite[Sec. 2]{Gui21b}. The following (adjoint) fusion relations were  proved in \cite[Prop. 2.3]{Gui21b}.

\begin{pp}\label{lb1}
Let $\mc H_i,\mc H_j,\mc H_k\in\Obj(\RepA)$, $\wtd I\in\Jtd$, and $\xi\in\mc H_i(I)$.\\
(a) If $\eta\in\mc H_j(I)$, then $L(\xi,\wtd I)\eta\in(\mc H_i\boxtimes\mc H_j)(I)$, $R(\xi,\wtd I)\eta\in(\mc H_j\boxtimes\mc H_i)(I)$, and
\begin{gather}
L(\xi,\wtd I)L(\eta,\wtd I)|_{\mc H_k}=L(L(\xi,\wtd I)\eta,\wtd I)|_{\mc H_k},\\
R(\xi,\wtd I)R(\eta,\wtd I)|_{\mc H_k}=R(R(\xi,\wtd I)\eta,\wtd I)|_{\mc H_k}.
\end{gather}
(b) If $\psi\in(\mc H_i\boxtimes H_j)(I)$ and $\phi\in (\mc H_j\boxtimes H_i)(I)$, then $L(\xi,\wtd I)^*\psi\in\mc H_j(I)$, $R(\xi,\wtd I)^*\phi\in\mc H_j(I)$, and
\begin{gather}
L(\xi,\wtd I)^*L(\psi,\wtd I)|_{\mc H_k}=L(L(\xi,\wtd I)^*\psi,\wtd I)|_{\mc H_k},\\
R(\xi,\wtd I)^*R(\phi,\wtd I)|_{\mc H_k}=R(R(\xi,\wtd I)^*\phi,\wtd I)|_{\mc H_k}.
\end{gather}

As a special case, when $x\in\mc A(I)$ and $\xi\in\mc H_i(I)$, then when acting on any $\mc H_j$,
\begin{align}
L(x\xi,\wtd I)=xL(\xi,\wtd I),\qquad R(x\xi,\wtd I)=xR(\xi,\wtd I).\label{eq11}	
\end{align}
\end{pp}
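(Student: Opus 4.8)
\textbf{Proof proposal for Proposition \ref{lb1}.}

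The plan is to bootstrap everything from the already-established adjoint-commutation facts in the categorical extension axioms together with the density of fusion products. For part (a), I would first verify the containment $L(\xi,\wtd I)\eta\in(\mc H_i\boxtimes\mc H_j)(I)$ by exhibiting $L(\xi,\wtd I)\eta$ as the image of $\Omega$ under a suitable $\mc A(I')$-intertwiner: using locality, for $\wtd J$ clockwise to $\wtd I$ the operator $R(\zeta,\wtd J)$ (for $\zeta\in\mc H_0(J)=\mc A(J)\Omega$) commutes adjointly with $L(\xi,\wtd I)$, so $L(\xi,\wtd I)$ restricted to $\mc H_j$ intertwines $\mc A(I')$-actions (since $\mc A(I')$ is generated by such $\mc A(J)$ by additivity and the definition of the clockwise complement); hence $L(\xi,\wtd I)\colon\mc H_j\to\mc H_i\boxtimes\mc H_j$ is an $\mc A(I')$-morphism, and composing with the $\mc A(I')$-morphism $\mc H_0\to\mc H_j$ defining $\eta$ and applying it to $\Omega$ gives an element of $(\mc H_i\boxtimes\mc H_j)(I)$ by \eqref{eq14}. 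Then the fusion identity $L(\xi,\wtd I)L(\eta,\wtd I)=L(L(\xi,\wtd I)\eta,\wtd I)$ on $\mc H_k$: both sides lie in $\Hom_{\mc A(I')}(\mc H_k,\mc H_i\boxtimes\mc H_j\boxtimes\mc H_k)$, so by density of fusion products (axiom (d)) it suffices to test them on vectors $R(\theta,\wtd J)\Omega$ with $\wtd J$ clockwise to $\wtd I$ and $\theta\in\mc H_k(J)$ — or more simply to test against $\Omega\in\mc H_k$ after using functoriality. Using state-field correspondence \eqref{eq14} and \eqref{eq25} (i.e. $L(\xi,\wtd I)\eta=R(\eta,\wtd J)\xi$ and its variants), applying both sides to $\Omega$ collapses them to the same vector $L(\xi,\wtd I)L(\eta,\wtd I)\Omega$. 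The $R$-version is symmetric, using clockwise complements on the other side.

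For part (b), the statement $L(\xi,\wtd I)^*\psi\in\mc H_j(I)$ when $\psi\in(\mc H_i\boxtimes\mc H_j)(I)$ is the crucial point and, I expect, the main obstacle. The natural approach: write $\psi=G\Omega$ for some $G\in\Hom_{\mc A(I')}(\mc H_0,\mc H_i\boxtimes\mc H_j)$, and observe that $L(\xi,\wtd I)^*\colon\mc H_i\boxtimes\mc H_j\to\mc H_j$ is again an $\mc A(I')$-intertwiner (adjoint of an $\mc A(I')$-morphism), so $L(\xi,\wtd I)^*\psi=(L(\xi,\wtd I)^*G)\Omega\in\mc H_j(I)$. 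The fusion relations $L(\xi,\wtd I)^*L(\psi,\wtd I)=L(L(\xi,\wtd I)^*\psi,\wtd I)$ on $\mc H_k$ then follow from part (a): since both sides are $\mc A(I')$-morphisms $\mc H_k\to\mc H_j\boxtimes\mc H_k$, one checks equality on $\Omega\in\mc H_k$ using state-field correspondence, reducing to the identity $L(\xi,\wtd I)^*L(\psi,\wtd I)\Omega=L(\xi,\wtd I)^*\psi=L(L(\xi,\wtd I)^*\psi,\wtd I)\Omega$; alternatively, take adjoints of the part-(a) identity in the form $L(\xi,\wtd I)L(\chi,\wtd I)=L(L(\xi,\wtd I)\chi,\wtd I)$ and substitute $\chi=L(\xi,\wtd I)^*\psi$, then compare with the part-(a) identity applied to $L(\xi,\wtd I)\chi$. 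Care is needed because adjoints are taken of operators between different Hilbert spaces and one must track on which module each operator acts; the adjoint-commutation bookkeeping from the ``commutes adjointly'' convention in Section \ref{lb18} handles this.

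Finally, the special case \eqref{eq11}: take $\xi\in\mc H_i(I)$ and $x\in\mc A(I)$. By \eqref{eq5}, $\pi_{i,I}(x)=L(x\Omega,\wtd I)|_{\mc H_i}$ and $x\xi=\pi_{i,I}(x)\xi=L(x\Omega,\wtd I)\xi$. Applying part (a) with $\eta=\xi$ and the role of $\mc H_i$ played by the vacuum module (noting $x\Omega\in\mc H_0(I)$), we get $L(x\Omega,\wtd I)L(\xi,\wtd I)=L(L(x\Omega,\wtd I)\xi,\wtd I)=L(x\xi,\wtd I)$ on any $\mc H_j$; the left-hand side is $xL(\xi,\wtd I)$ by \eqref{eq5} again (after identifying $\mc H_0\boxtimes\mc H_i\cong\mc H_i$ via the unitor and checking the identification is compatible with the $\mc A(I)$-action, which it is by functoriality). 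The $R$-version is identical with clockwise complements swapped. I anticipate the only genuine subtlety throughout is the containment claims in (a) and (b) — everything else is a density-plus-state-field-correspondence reduction to evaluating at $\Omega$ — and these containments are exactly where Haag duality, additivity, and the precise definition of $\mc H_i(I)$ as $\Hom_{\mc A(I')}(\mc H_0,\mc H_i)\cdot\Omega$ get used.
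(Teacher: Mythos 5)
First, a point of comparison: the paper itself does not prove Proposition \ref{lb1} at all --- it simply cites \cite[Prop.\ 2.3]{Gui21b} --- so your proposal has to be judged against what the axioms of Definition \ref{lb23} actually support. Your containment arguments are correct and are the easy part: $L(\xi,\wtd I)|_{\mc H_j}$ lies in $\Hom_{\mc A(I')}(\mc H_j,\mc H_i\boxtimes\mc H_j)$ by definition, so composing with the intertwiner defining $\eta$ (resp.\ taking the adjoint and composing with the intertwiner defining $\psi$) and evaluating at $\Omega$ gives the containments in (a) and (b). For the operator identity in (a), your first suggestion is the right mechanism, provided you state it precisely: test both sides on the dense subspace $\mc H_k(J)$ (these are exactly your vectors $R(\theta,\wtd J)\Omega=\theta$; the density used here is that of $\mc H_k(J)$ in $\mc H_k$, not axiom (d)), and compute $L(\xi,\wtd I)L(\eta,\wtd I)\theta=L(\xi,\wtd I)R(\theta,\wtd J)\eta=R(\theta,\wtd J)L(\xi,\wtd I)\eta=L(L(\xi,\wtd I)\eta,\wtd I)\theta$, where the outer equalities are \eqref{eq25} and the middle one is the locality axiom (e). Your fallback phrasing ``check equality on $\Omega\in\mc H_k$'' is not meaningful: $\Omega$ lives only in $\mc H_0$, and agreement of the two sides on the vacuum module (where both trivially give $L(\xi,\wtd I)\eta$) does not imply the identity on a general $\mc H_k$, which is the whole content of the proposition.

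The genuine gap is in part (b), where neither of your two proposed justifications closes the argument. ``Equality on $\Omega\in\mc H_k$'' fails for the reason above, and the alternative of taking adjoints of (a) and substituting $\chi=L(\xi,\wtd I)^*\psi$ only yields $L(L(\xi,\wtd I)^*\psi,\wtd I)^*L(\xi,\wtd I)^*=L(L(\xi,\wtd I)L(\xi,\wtd I)^*\psi,\wtd I)^*$, from which you cannot extract $L(\xi,\wtd I)^*L(\psi,\wtd I)=L(L(\xi,\wtd I)^*\psi,\wtd I)$: there is no way to cancel $L(\xi,\wtd I)^*$, and $L(\xi,\wtd I)L(\xi,\wtd I)^*$ is not the identity unless $\xi$ is unitary. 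Part (b) is not a formal consequence of part (a); the indispensable extra input is the adjoint half of the locality axiom (the ``$CA^*=B^*D$'' half of adjoint commutativity in axiom (e)), which is exactly why these are called \emph{adjoint} fusion relations. The correct argument runs parallel to (a): for $\theta\in\mc H_k(J)$, $L(\xi,\wtd I)^*L(\psi,\wtd I)\theta=L(\xi,\wtd I)^*R(\theta,\wtd J)\psi=R(\theta,\wtd J)L(\xi,\wtd I)^*\psi=L(L(\xi,\wtd I)^*\psi,\wtd I)\theta$, using \eqref{eq25}, the adjoint commutation of $L(\xi,\wtd I)$ with $R(\theta,\wtd J)$, and the containment $L(\xi,\wtd I)^*\psi\in\mc H_j(I)$; then conclude by density of $\mc H_k(J)$. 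Your closing remark about ``adjoint-commutation bookkeeping'' gestures in this direction, but the convention in Section \ref{lb18} is only a definition --- the step must explicitly invoke locality (e) of Definition \ref{lb23}. The treatment of the special case \eqref{eq11} via \eqref{eq5} is fine.
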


\begin{lm}\label{lb4}
For each $\mc H_i,\mc H_j\in\Obj(\RepA)$, if $\xi_1,\xi_2\in\mc H_i(I)$, then $L(\xi_1,\wtd I)^*L(\xi_2,\wtd I)|_{\mc H_0}\in\mc A(I)$, and
\begin{align}
L(\xi_1,\wtd I)^*L(\xi_2,\wtd I)|_{\mc H_j}=\pi_{j,I}\big(L(\xi_1,\wtd I)^*L(\xi_2,\wtd I)|_{\mc H_0}\big).	\label{eq8}
\end{align}
\end{lm}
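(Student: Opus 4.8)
The plan is to prove both assertions simultaneously by producing an explicit operator $a \in \mc A(I)$ such that $a = L(\xi_1,\wtd I)^* L(\xi_2,\wtd I)|_{\mc H_0}$ and such that $\pi_{j,I}(a)$ agrees with $L(\xi_1,\wtd I)^* L(\xi_2,\wtd I)|_{\mc H_j}$ for every $\mc A$-module $\mc H_j$. The key observation is that the composite $T := L(\xi_1,\wtd I)^* L(\xi_2,\wtd I)$, when acting on an arbitrary $\mc A$-module $\mc H_k$, lies in $\Hom_{\mc A(I')}(\mc H_k,\mc H_k)$: indeed $L(\xi_1,\wtd I), L(\xi_2,\wtd I) \in \Hom_{\mc A(I')}(\mc H_k, \mc H_i\boxtimes\mc H_k)$ by the definition of a categorical extension, so $T$ commutes with $\pi_{k,I'}(\mc A(I'))$. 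Specializing to $\mc H_k = \mc H_0$ and invoking Haag duality $\mc A(I')' = \mc A(I)$, we get $T|_{\mc H_0} \in \mc A(I)$; call this operator $a$.

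First I would establish \eqref{eq8} on the vacuum sector of $\mc H_j$, that is, on the dense subspace $\mc H_j(I) = \Hom_{\mc A(I')}(\mc H_0,\mc H_j)\cdot\Omega$, or more conveniently on vectors of the form $L(\eta,\wtd I)\Omega = \eta$ with $\eta \in \mc H_j(I)$ together with the state-field correspondence \eqref{eq14}. The computation goes: for $\eta \in \mc H_j(I)$,
\begin{align*}
L(\xi_1,\wtd I)^* L(\xi_2,\wtd I)\eta = L(\xi_1,\wtd I)^* L(\xi_2,\wtd I) L(\eta,\wtd I)\Omega.
\end{align*}
Now I want to move $L(\eta,\wtd I)$ to the left past $L(\xi_1,\wtd I)^* L(\xi_2,\wtd I)$. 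Since $\eta \in \mc H_j(I)$ and $\xi_1,\xi_2 \in \mc H_i(I)$ all sit in the \emph{same} interval, I cannot use locality directly; instead I use the fusion relations of Proposition \ref{lb1}. Concretely, apply \eqref{eq11}-type reasoning and the functoriality \eqref{eq2}: $L(\eta,\wtd I)$ commutes with $L(\xi,\wtd I)$ in the sense that $L(\xi_2,\wtd I)L(\eta,\wtd I) = L(L(\xi_2,\wtd I)\eta,\wtd I)$ and similarly $L(\xi_1,\wtd I)^* L(L(\xi_2,\wtd I)\eta,\wtd I) = L(L(\xi_1,\wtd I)^*L(\xi_2,\wtd I)\eta,\wtd I)$ by Prop. \ref{lb1}(a),(b) — here $L(\xi_2,\wtd I)\eta \in (\mc H_i\boxtimes\mc H_j)(I)$ and $L(\xi_1,\wtd I)^*L(\xi_2,\wtd I)\eta \in \mc H_j(I)$ by the same proposition. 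Hence
\begin{align*}
L(\xi_1,\wtd I)^* L(\xi_2,\wtd I)\eta = L\big(L(\xi_1,\wtd I)^*L(\xi_2,\wtd I)\eta,\ \wtd I\big)\Omega = L(\xi_1,\wtd I)^*L(\xi_2,\wtd I)\eta|_{\mc H_0},
\end{align*}
which exhibits the action of $T$ on $\eta \in \mc H_j(I)$ as the action of $T|_{\mc H_0} = a$ followed by the identification $\mc H_j(I) \subset \mc H_j$; more carefully, one checks that for $\eta \in \mc H_j(I)$, $T\eta = \pi_{j,I}(a)\eta$ by using \eqref{eq5} (which says $L(a\Omega,\wtd I)|_{\mc H_j} = \pi_{j,I}(a)$) together with the fact that $T|_{\mc H_0}\eta$-type expressions propagate across modules via $L(\cdot,\wtd I)$.

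Once \eqref{eq8} is verified on the dense subspace $\mc H_j(I)$, I extend to all of $\mc H_j$ by continuity: both sides of \eqref{eq8} are bounded operators on $\mc H_j$ (the left side because $L(\xi_1,\wtd I), L(\xi_2,\wtd I)$ are bounded, the right side because $\pi_{j,I}$ is a representation and $a$ is bounded), and $\mc H_j(I)$ is dense in $\mc H_j$. I expect the main obstacle to be the bookkeeping in the middle step — justifying that $L(\xi_2,\wtd I)\eta$ and $L(\xi_1,\wtd I)^* L(\xi_2,\wtd I)\eta$ genuinely lie in the relevant spaces $(\mc H_i\boxtimes\mc H_j)(I)$ and $\mc H_j(I)$, and that the repeated use of the fusion relations in Prop. \ref{lb1} is legitimate when all vectors share the same $\wtd I$; this is precisely the content of Prop. \ref{lb1}(a),(b), so the argument is really an exercise in assembling those relations correctly rather than proving anything genuinely new. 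An alternative, perhaps cleaner route avoiding the fusion gymnastics: observe $T = L(\xi_1,\wtd I)^* L(\xi_2,\wtd I)$ lies in $\Hom_{\mc A(I')}$ on every module, and the collection $\{T|_{\mc H_j}\}_j$ is automatically "intertwining-compatible" in the sense that it commutes with all of $\Hom_{\mc A}(\mc H_0,\mc H_j)$; by a standard transport-of-operator argument (using that $\mc H_j(I) = \Hom_{\mc A(I')}(\mc H_0,\mc H_j)\Omega$ and Haag duality), any such family is necessarily of the form $\pi_{j,I}(a)$ for a single $a \in \mc A(I)$, namely $a = T|_{\mc H_0}$. I would present the first route as the main argument since it uses only tools already set up in the excerpt.
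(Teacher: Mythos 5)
Your opening step (that $a:=L(\xi_1,\wtd I)^*L(\xi_2,\wtd I)|_{\mc H_0}$ commutes with $\mc A(I')$ and hence lies in $\mc A(I)$ by Haag duality) is correct and is exactly the paper's first sentence. But the heart of \eqref{eq8} --- identifying the operator $T|_{\mc H_j}:=L(\xi_1,\wtd I)^*L(\xi_2,\wtd I)|_{\mc H_j}$ with $\pi_{j,I}(a)$ --- is not actually proved. Your displayed computation on $\eta\in\mc H_j(I)$ is circular: applying Prop.~\ref{lb1}(a) and then (b) on the module $\mc H_0$ yields $L(\xi_1,\wtd I)^*L(\xi_2,\wtd I)L(\eta,\wtd I)\Omega=L\big(L(\xi_1,\wtd I)^*L(\xi_2,\wtd I)\eta,\wtd I\big)\Omega$, and by state--field correspondence \eqref{eq14} the right-hand side is again $L(\xi_1,\wtd I)^*L(\xi_2,\wtd I)\eta$; you have shown $T\eta=T\eta$. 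The terminal expression ``$L(\xi_1,\wtd I)^*L(\xi_2,\wtd I)\eta|_{\mc H_0}$'' does not typecheck ($\eta\notin\mc H_0$), and the phrase about $T|_{\mc H_0}$-type expressions ``propagating across modules'' is precisely the content of the lemma, so the crucial step is asserted rather than derived. The ``cleaner alternative route'' is also flawed: the commutation $T|_{\mc H_j}S=Sa$ for $S\in\Hom_{\mc A}(\mc H_0,\mc H_j)$ does follow from functoriality \eqref{eq2}, but it is vacuous whenever $\mc H_j$ contains no vacuum subrepresentation, so it cannot force $T|_{\mc H_j}=\pi_{j,I}(a)$; and replacing $\Hom_{\mc A}(\mc H_0,\mc H_j)$ by $\Hom_{\mc A(I')}(\mc H_0,\mc H_j)$ does not rescue it, since for $Z$ in that space the identity $T|_{\mc H_j}Z=Za$ is generally false ($Z$ intertwines only $\mc A(I')$, not $a\in\mc A(I)$), which is exactly why transporting through a unitary $Z\in\Hom_{\mc A(I')}(\mc H_0,\mc H_j)$ produces $ZaZ^*$, an operator that need not equal $\pi_{j,I}(a)$.

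The repair is short and is the paper's proof: apply Prop.~\ref{lb1}(b) directly to the operator $T|_{\mc H_j}$ rather than to its composition with $L(\eta,\wtd I)$. Take in (b) the module playing the role of ``$\mc H_j$'' to be $\mc H_0$, so that $\psi=\xi_2\in(\mc H_i\boxtimes\mc H_0)(I)=\mc H_i(I)$, and let $\mc H_k=\mc H_j$ be arbitrary; then $L(\xi_1,\wtd I)^*L(\xi_2,\wtd I)|_{\mc H_j}=L\big(L(\xi_1,\wtd I)^*\xi_2,\wtd I\big)|_{\mc H_j}$, and since $L(\xi_1,\wtd I)^*\xi_2=L(\xi_1,\wtd I)^*L(\xi_2,\wtd I)\Omega=a\Omega$, relation \eqref{eq5} gives $L(a\Omega,\wtd I)|_{\mc H_j}=\pi_{j,I}(a)$, proving \eqref{eq8} on all of $\mc H_j$ with no density or continuity argument. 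If you do want a dense-subspace computation, the workable one takes $\eta\in\mc H_j(I')$ (not $\mc H_j(I)$): then $L(\xi_2,\wtd I)\eta=R(\eta,\wtd I')\xi_2$ by \eqref{eq25}, the locality axiom's adjoint commutativity gives $L(\xi_1,\wtd I)^*R(\eta,\wtd I')=R(\eta,\wtd I')L(\xi_1,\wtd I)^*$, and then $T\eta=R(\eta,\wtd I')a\Omega=L(a\Omega,\wtd I)\eta=\pi_{j,I}(a)\eta$; your insistence on $\eta\in\mc H_j(I)$, where locality is unavailable, is what pushed the argument into the circular use of the fusion relations.
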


\begin{proof}
Since $L(\xi_1,\wtd I)^*L(\xi_2,\wtd I)|_{\mc H_0}$ commutes with $\mc A(I')$, it  is in $\mc A(I)$. By the Prop. \ref{lb1} and \eqref{eq5}, 
\begin{align*}
&L(\xi_1,\wtd I)^*L(\xi_2,\wtd I)|_{\mc H_j}=L(L(\xi_1,\wtd I)^*\xi_2,\wtd I)|_{\mc H_j}\\	=&L(L(\xi_1,\wtd I)^*L(\xi_2,\wtd I)\Omega,\wtd I)|_{\mc H_j}=\pi_{j,I}\big(L(\xi_1,\wtd I)^*L(\xi_2,\wtd I)|_{\mc H_0}\big).	
\end{align*}
\end{proof}

The following lemma is \cite[Lemma 6.1]{Gui21a}.
\begin{lm}\label{lb2}
Suppose $\xi\in\mc H_i(I)$ and $L(\xi,\wtd I)|_{\mc H_0}=R(\xi,\wtd I)|_{\mc H_0}$ is a unitary map from $\mc H_0$ to $\mc H_i$, then for each $\mc A$-module $\mc H_j$, $L(\xi,\wtd I)|_{\mc H_j}$ are unitary maps from $\mc H_j$ to $\mc H_i\boxtimes\mc H_j$ and $\mc H_j\boxtimes\mc H_i$ respectively. In this case, we say $\xi$ is \textbf{unitary}.
\end{lm}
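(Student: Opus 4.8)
I want to show that if $L(\xi,\wtd I)|_{\mc H_0}$ is unitary from $\mc H_0$ to $\mc H_i$, then $L(\xi,\wtd I)|_{\mc H_j}$ is unitary from $\mc H_j$ to $\mc H_i\boxtimes\mc H_j$ (and similarly $R(\xi,\wtd I)|_{\mc H_j}$ onto $\mc H_j\boxtimes\mc H_i$). The core idea is to exhibit an explicit two-sided inverse. If $L(\xi,\wtd I)|_{\mc H_0}$ is unitary, then its inverse is $L(\xi,\wtd I)^*|_{\mc H_i}$, and the natural candidate for the inverse of $L(\xi,\wtd I)|_{\mc H_j}$ should be built from $L(\xi,\wtd I)^*$ acting on $\mc H_i\boxtimes\mc H_j$. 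So first I would set $u = L(\xi,\wtd I)|_{\mc H_0}$ and observe that $u^*u = \id_{\mc H_0}$ and $uu^* = \id_{\mc H_i}$, where $u^*u = L(\xi,\wtd I)^*L(\xi,\wtd I)|_{\mc H_0} \in \mc A(I)$ by Lemma~\ref{lb4}, so in fact $L(\xi,\wtd I)^*L(\xi,\wtd I)|_{\mc H_0} = \id$ as an element of $\mc A(I)$.

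\textbf{Key steps.} (1) Using Lemma~\ref{lb4} (equation \eqref{eq8}), transport the relation $L(\xi,\wtd I)^*L(\xi,\wtd I)|_{\mc H_0} = \id$ to any $\mc H_j$: we get $L(\xi,\wtd I)^*L(\xi,\wtd I)|_{\mc H_j} = \pi_{j,I}(\id) = \id_{\mc H_j}$. Hence $L(\xi,\wtd I)|_{\mc H_j}$ is an isometry for every $j$, and in particular $R(\xi,\wtd I)|_{\mc H_j}$ is an isometry too (by \eqref{eq24}, since $\ss_{i,j}$ is unitary, or by the analogous statement for $R$). (2) It remains to prove surjectivity, i.e.\ $L(\xi,\wtd I)|_{\mc H_j}$ has dense range — equivalently $L(\xi,\wtd I)L(\xi,\wtd I)^*|_{\mc H_i\boxtimes\mc H_j} = \id_{\mc H_i\boxtimes\mc H_j}$. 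For this I use the density of fusion products: by axiom (d), $L(\mc H_i(I),\wtd I)\mc H_j$ spans a dense subspace of $\mc H_i\boxtimes\mc H_j$. But using \eqref{eq11}, every $\eta \in \mc H_i(I)$ can be written as $\eta = x\xi'$... no — rather, I want to write an arbitrary $\eta\in\mc H_i(I)$ in the form $L(\xi,\wtd I)(L(\xi,\wtd I)^*\eta)$, which holds because $uu^* = \id_{\mc H_i}$ on $\mc H_i$ and $\mc H_i(I)$ is preserved appropriately (Prop.~\ref{lb1}(b) gives $L(\xi,\wtd I)^*\eta \in \mc H_0(I) = \mc A(I)\Omega$). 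Then for such $\eta = L(\xi,\wtd I)\zeta$ with $\zeta\in\mc H_0(I)$, Prop.~\ref{lb1}(a) gives $L(\eta,\wtd I)\psi = L(L(\xi,\wtd I)\zeta,\wtd I)\psi = L(\xi,\wtd I)L(\zeta,\wtd I)\psi$ for $\psi\in\mc H_j$, so $L(\eta,\wtd I)\psi$ lies in the range of $L(\xi,\wtd I)|_{\mc H_j}$. Since such vectors span a dense subspace of $\mc H_i\boxtimes\mc H_j$, the isometry $L(\xi,\wtd I)|_{\mc H_j}$ has dense range, hence is unitary. (3) The argument for $R(\xi,\wtd I)|_{\mc H_j}$ is symmetric, using the $R$-versions of Lemma~\ref{lb4}, Prop.~\ref{lb1}, and axiom (d) — or one may simply compose with the unitary braiding $\ss$ via \eqref{eq24}.

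\textbf{Main obstacle.} The isometry half is immediate from Lemma~\ref{lb4}; the real content is surjectivity. The delicate point is making sure that when I express a general $\eta\in\mc H_i(I)$ as $L(\xi,\wtd I)\zeta$, the vector $\zeta = L(\xi,\wtd I)^*\eta$ genuinely lies in $\mc H_0(I)$ so that the fusion relation Prop.~\ref{lb1}(a) applies — this is exactly Prop.~\ref{lb1}(b) with $k=0$, so it goes through, but it must be invoked carefully. A cleaner alternative worth checking: show directly that $P := L(\xi,\wtd I)L(\xi,\wtd I)^*|_{\mc H_i\boxtimes\mc H_j}$ is a projection commuting with everything in sight (it lies in $\Hom_{\mc A(I')}$, and using functoriality plus the fusion relations one shows it also commutes with all $L(\eta,\wtd I)$, hence with $\mc A(I)$ by \eqref{eq5}), so $P\in\Hom_{\mc A}(\mc H_i\boxtimes\mc H_j,\mc H_i\boxtimes\mc H_j)$; then density of fusion products forces $P = \id$. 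I expect the write-up to follow the first route since it is shorter and more concrete.
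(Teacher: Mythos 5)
Your proof is correct. Note that the paper itself gives no proof of this lemma --- it is quoted verbatim from \cite[Lemma 6.1]{Gui21a} --- so the relevant comparison is whether your argument is a valid self-contained derivation from the tools available here, and it is. The isometry half follows exactly as you say from Lemma \ref{lb4}: $L(\xi,\wtd I)^*L(\xi,\wtd I)|_{\mc H_0}=\id$ transports to $\pi_{j,I}(\id)=\id_{\mc H_j}$ on every module, and the $R$ statement then comes for free from the unitarity of $\ss_{i,j}$ via \eqref{eq24}. The surjectivity half is the real content and your handling of the delicate point is right: Prop.~\ref{lb1}(b) with $\mc H_j=\mc H_0$ puts $\zeta=L(\xi,\wtd I)^*\eta$ in $\mc H_0(I)=\mc A(I)\Omega$, the co-isometry identity $L(\xi,\wtd I)L(\xi,\wtd I)^*|_{\mc H_i}=\id_{\mc H_i}$ (from unitarity on the vacuum) recovers $\eta=L(\xi,\wtd I)\zeta$, and then Prop.~\ref{lb1}(a) shows each spanning vector $L(\eta,\wtd I)\psi$ lies in the range of the isometry $L(\xi,\wtd I)|_{\mc H_j}$; since an isometry has closed range, density of fusion products finishes the job. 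One small caveat: the ``cleaner alternative'' you sketch at the end is not quite self-supporting as phrased --- $P=L(\xi,\wtd I)L(\xi,\wtd I)^*$ being an $\mc A$-module endomorphism does not by itself force $P=\id$; what forces it is that $P$ fixes the dense spanning set, which is essentially your first route again. Since the first route is complete, this does not affect the proof.
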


Note that unitary vectors in $\mc H_i(I)$ always exist since $\mc H_i$ and $\mc H_0$ are equivalent representations of the  type III factor $\mc A(I')$.

For each $t\in\Rbb$, let $\varrho(t)\in\UPSU$ be the anticlockwise rotation by $t$. The following property will not be used until Sec. \ref{lb19}.

\begin{lm}\label{lb20}
For each $\mc A$-module $\mc H_i,\mc H_j$, each $\wtd I\in\Jtd$  and each $\xi\in\mc H_i(I),\eta\in\mc H_j$,
\begin{align}
L(\xi,\varrho(2\pi)\wtd I)\eta=\ss_{j,i}\ss_{i,j}L(\xi,\wtd I)\eta.	
\end{align}
\end{lm}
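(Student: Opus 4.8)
The plan is to reduce to the case of a \emph{localized} $\eta$ and then apply the braiding axiom \eqref{eq24} once together with the exchange relation \eqref{eq25} twice. Since $L(\xi,\varrho(2\pi)\wtd I)$, $L(\xi,\wtd I)$, $\ss_{i,j}$ and $\ss_{j,i}$ are all bounded, and since $\mc H_j(J)$ is dense in $\mc H_j$ for any single $J\in\mc J$, it suffices to verify the asserted identity after applying both sides to a vector $\eta\in\mc H_j(J)$ for one conveniently chosen $\wtd J\in\Jtd$.

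Write $\wtd I=(I,\arg_I)$, so $\varrho(2\pi)\wtd I=(I,\arg_I+2\pi)$. The geometric fact that makes the argument work is that the two conditions ``$\wtd J$ is anticlockwise to $\wtd I$'' and ``$\wtd J$ is clockwise to $\varrho(2\pi)\wtd I$'' are \emph{literally the same}, both amounting to $\arg_I(z)<\arg_J(\zeta)<\arg_I(z)+2\pi$ for all $z\in I,\zeta\in J$; and such $\wtd J$ exist precisely because $I$ is non-dense. Concretely one may take $\wtd J=\varrho(2\pi)\wtd I'$, whose underlying interval is $I'$ (so $\wtd J$ is disjoint from both $\wtd I$ and $\varrho(2\pi)\wtd I$, and $\mc H_j(J)=\mc H_j(I')$ is dense in $\mc H_j$). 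Fix such a $\wtd J$ and a vector $\eta\in\mc H_j(J)$.

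Now compute on $\eta$. On one hand, since $\varrho(2\pi)\wtd I$ is anticlockwise to $\wtd J$, relation \eqref{eq25} gives $L(\xi,\varrho(2\pi)\wtd I)\eta=R(\eta,\wtd J)\xi$. On the other hand, \eqref{eq24} gives $\ss_{i,j}L(\xi,\wtd I)\eta=R(\xi,\wtd I)\eta$; next, since $\wtd J$ is anticlockwise to $\wtd I$, relation \eqref{eq25} with the roles of $(i,\wtd I)$ and $(j,\wtd J)$ interchanged gives $R(\xi,\wtd I)\eta=L(\eta,\wtd J)\xi$; finally \eqref{eq24}, again with $i$ and $j$ interchanged, gives $\ss_{j,i}L(\eta,\wtd J)\xi=R(\eta,\wtd J)\xi$. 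Composing these three steps yields $\ss_{j,i}\ss_{i,j}L(\xi,\wtd I)\eta=R(\eta,\wtd J)\xi=L(\xi,\varrho(2\pi)\wtd I)\eta$. Hence both sides of the claimed identity agree on the dense subspace $\mc H_j(J)$, and therefore on all of $\mc H_j$ by boundedness.

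The only point demanding care is the bookkeeping of the clockwise/anticlockwise relations: one must check that a single $\wtd J$ can simultaneously sit ``below'' $\varrho(2\pi)\wtd I$ and ``above'' $\wtd I$, which is exactly the window of width $2\pi-(\text{length of }I)$ that is freed up because $I$ is non-dense, and this is where the $2\pi$-shift is used. Apart from this, the proof is a direct substitution using Definition \ref{lb23} and the relations \eqref{eq24}, \eqref{eq25}, with no analytic input.
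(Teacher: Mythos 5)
Your proof is correct and is essentially the paper's own argument: the paper also reduces (via density and boundedness) to $\eta\in\mc H_j(I')$ with the arg-valued interval $\wtd I_1'=(\varrho(2\pi)\wtd I)'$ — which coincides with your $\wtd J=\varrho(2\pi)\wtd I'$ — and then runs the identical chain $L(\xi,\varrho(2\pi)\wtd I)\eta=R(\eta,\wtd J)\xi=\ss_{j,i}L(\eta,\wtd J)\xi=\ss_{j,i}R(\xi,\wtd I)\eta=\ss_{j,i}\ss_{i,j}L(\xi,\wtd I)\eta$ using \eqref{eq24} and \eqref{eq25}. Your write-up just makes the density reduction and the clockwise/anticlockwise bookkeeping more explicit.
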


Note that $(\varrho(2\pi)\wtd I)''=\wtd I$.

\begin{proof}
Let $\wtd I_1=\varrho(2\pi)\wtd I$. Assume without loss of generality that $\eta\in\mc H_i(I')=\mc H_i(I_1')$. Then by \eqref{eq25} and the braiding axiom \eqref{eq24},
\begin{align*}
&L(\xi,\wtd I_1)\eta=R(\eta,\wtd I_1')\xi=\ss_{j,i}L(\eta,\wtd I_1')\xi=\ss_{j,i}R(\xi,\wtd I_1'')\eta\\
=&\ss_{j,i}R(\xi,\wtd I)\eta=\ss_{j,i}\ss_{i,j}L(\xi,\wtd I)\eta.
\end{align*}
\end{proof}

Recall that an $\mc A$-module $\mc H_i$ is \textbf{dualizable} if there is also a module $\mc H_{\ovl i}$ and homomorphisms $\ev_{\ovl i,i}:\mc H_{\ovl i}\boxtimes\mc H_i\rightarrow\mc H_0,\ev_{i,\ovl i}:\mc H_i\boxtimes\mc H_{\ovl i}\rightarrow\mc H_0$ satisfying the conjugate equations
\begin{align}\label{eq22}
(\id_i\boxtimes\ev_{\ovl i,i})((\ev_{i,\ovl i})^*\boxtimes\id_i)=\id_i,\qquad(\id_{\ovl i}\boxtimes\ev_{i,\ovl i})((\ev_{\ovl i,i})^*\boxtimes\id_{\ovl i})=\id_{\ovl i}	
\end{align}
We refer the readers to \cite{LR97,Yam04,BDH14} for general results on dualizable objects in $C^*$-tensor categories. Recall  that the spaces of morphisms between dualizable objects are finite dimensional. We let
\begin{align*}
\RepfA=\text{the category of dualizable M\"obius covariant $\mc A$-modules}.	
\end{align*}
By \cite[Prop. 2.2]{GL96}, for each $\mc H_i\in\Obj(\RepfA)$ there is a unique strongly continuous unitary representation of $\UPSU$ on $\mc H_i$ making the $\mc A$-module $\mc H_i$ M\"obius covariant. From this uniqueness, it is easy to see that the morphisms of M\"obius covariant $\mc A$-modules intertwine the actions of $\UPSU$ (cf. \cite[Lemma B.1]{Gui21b}). By \cite[Sec. B]{Gui21b}, $\RepfA$ is closed under $\boxtimes$ and hence is a full braided $C^*$-tensor subcategory of $\RepA$, and the restriction of $\scr E$ to $\RepfA$ is \textbf{M\"obius covariant}, which means that  for each $\mc H_i\in\Obj(\RepfA)$, $\wtd I\in\Jtd$, $\xi\in\mc H_i(I)$, and $g\in\UPSU$,
\begin{align}
	L(g\xi,g\wtd I)=gL(\xi,\wtd I)g^{-1},\qquad R(g\xi,g\wtd I)=gR(\xi,\wtd I)g^{-1}\label{eq31}	
\end{align}
when acting on any object of $\RepfA$.

%Note that for each fixed $I\in\mc J$, each $\mc A$-module can be regarded as an $\mc A(I)$-$\mc A(I')^\opp$-bimodule. It follows from \cite[Thm. 2.3]{GL96} that a M\"obius covariant $\mc A$-module $\mc H_i$ is dualizable (in $\RepA$) if and only if it is dualizable in the category of  $\mc A(I)$-$\mc A(I')^\opp$-bimodule  (with respect to the cyclic separating vector $\Omega$). The latter is equivalent to the subfactors $\pi_{i,I}(\mc A(I))\subset \pi_{i,I}(\mc A(I'))'$ have finite index.

%We refer the readers to \cite[Sec. 2]{LR95} and the references therein for the basic facts about type III subfactors and their indexes. Details can be found in \cite{Lon89,Lon90,Kos98}.

\section{$C^*$-Frobenius algebras and finite index extensions}

Recall that  $\mc A$ is always assumed to be an irreducible M\"obius net.

\begin{df}
	A \textbf{$C^*$-Frobenius algebra} in $\RepA$ is a triple $Q=(\mc H_a,\mu,\iota)$ where $\mc H_a$ is an $\mc A$-module, $\mu\in\Hom_{\mc A}(\mc H_a\boxtimes\mc H_a,\mc H_a)$, and $\iota\in\Hom_{\mc A}(\mc H_0,\mc H_a)$  satisfying the following conditions
	\begin{itemize}
		\item (Unit) $\mu(\iota\boxtimes\id_a)=\id_a=\mu(\id_a\boxtimes\iota)$.
		\item (Associativity+Frobenius relation) The following diagram commutes adjointly.
		\begin{equation}
			\begin{tikzcd}
				\mc H_a\boxtimes\mc H_a\boxtimes \mc H_a\arrow[rr,"\id_a\boxtimes\mu"]\arrow[d,"\mu\boxtimes\id_a"'] && \mc H_a\boxtimes\mc H_a\arrow[d,"\mu"]\\
				\quad\mc H_a\boxtimes\mc H_a\quad\arrow[rr,"\mu"] && \quad\mc H_a\quad
			\end{tikzcd}	
		\end{equation} 
	\end{itemize}
	If, moreover, $\iota$ is an isometry (i.e. $\iota^*\iota=\id_0$), we say $Q$ is \textbf{normalized}.
\end{df}

$\mc H_a$ is necessarily dualizable, since we can choose dual object $\mc H_{\ovl a}=\mc H_a$ and evaluations $\ev_{a,\ovl a}=\ev_{\ovl a,a}=\iota^*\mu$.

\begin{df}
	A (unitary) \textbf{left $Q$-module} means $(\mc H_i,\mu_i)$ where $\mc H_i$ is an $\mc A$-module, $\mu_i\in\Hom_{\mc A}(\mc H_a\boxtimes\mc H_i,\mc H_i)$, and the following are satisfied:	
	\begin{itemize}
		\item (Unit) $\mu_i(\iota\boxtimes\id_i)=\id_i$.
		\item (Associativity+Frobenius relation) The following diagram commutes adjointly.
		\begin{equation}\label{eq3}
			\begin{tikzcd}
				\mc H_a\boxtimes\mc H_a\boxtimes \mc H_i\arrow[rr,"\id_a\boxtimes\mu_i"]\arrow[d,"\mu\boxtimes\id_i"'] && \mc H_a\boxtimes\mc H_i\arrow[d,"\mu_i"]\\
				\quad\mc H_a\boxtimes\mc H_i\quad\arrow[rr,"\mu_i"] && \quad\mc H_i\quad
			\end{tikzcd}	
		\end{equation} 
	\end{itemize}	
\end{df}	

Assuming the unit property and that \eqref{eq3} commutes, then it is not hard to check that \eqref{eq3} commutes adjointly if and only if $\mu_i^*=(\id_a\boxtimes\mu_i)(\coev_{a,a}\boxtimes\id_i)$ where $\coev_{a,a}=\mu^*\iota$. Thus, the definition here agrees with the usual one.

\begin{df}\label{lb16}
A \textbf{(non-local) extension} of $\mc A$ denotes $(\mc H_a,\mc B,\iota)$, or simply $\mc B$, where $\mc H_a$ is an $\mc A$-module, $\iota\in\Hom_{\mc A}(\mc H_0,\mc H_a)$, and $\mc B$ associates to each $\wtd I\in\Jtd$ a von Neumann algebra $\mc B(\wtd I)$ on $\mc H_a$ such that the following conditions hold.
\begin{itemize}
\item (Extension property) For each $\wtd I=(I,\arg_I)\in\Jtd$, we have $\pi_{a,I}(\mc A(I))\subset\mc B(\wtd I)$.
\item (Isotony) If $\wtd I\subset\wtd J$ then $\mc B(\wtd I)\subset\mc B(\wtd J)$.
\item (Reeh-Schlieder property) For each $\wtd I$, $\iota\Omega$ is a cyclic separating vector for $\mc B(\wtd I)$.
\item (Relative locality) For each $\wtd I\in\Jtd$, $[\mc B(\wtd I),\pi_{a,I'}(\mc A(I'))]=0$.
\end{itemize}
\end{df}

%When there is no confusion, we identify $\mc H_0$ with its image under $\iota$, and consider each $\mc A(I)$ as a von Neumann subalgebra of $\mc B(\wtd I)$.
	
\begin{df}\label{lb7}
If $\mc B$ is a non-local extension as above, a \textbf{$(\mc B,\mc A)$-module} is $(\pi_i,\mc H_i)$ where $\mc H_i$ is a $\mc A$-module, and $\pi_i$ associates to each $\wtd I\in\mc J$ a (normal) representation $\pi_{i,\wtd I}$ of $\mc B(\wtd I)$ on $\mc H_i$ satisfying the following conditions.
\begin{itemize}
\item (Extension property) For each $\wtd I\in\Jtd$, when acting on $\mc A(I)$,
\begin{align}
\pi_{i,I}=\pi_{i,\wtd I}\circ\pi_{a,I}\label{eq6}	
\end{align}
\item (Compatibility) If $\wtd I\subset\wtd J$ then $\pi_{i,\wtd J}|_{\mc B(\wtd I)}=\pi_{i,\wtd I}$.
\end{itemize}
\end{df}

Clearly $\mc H_a$ itself is a $(\mc B,\mc A)$-module, called the \textbf{vacuum $(\mc B,\mc A)$-module}.

\begin{rem}\label{lb17}
Any $(\mc B,\mc A)$-module is automatically \textbf{relatively local}, which means that for each $J\in\mc J$ (without arg value) disjoint from $\wtd I$, $[\pi_{i,\wtd I}(\mc B(\wtd I)),\pi_{i,J}(\mc A(J))]=0$. To see this, assume without loss of generality that $\wtd I$ and $J$ are contained in $\wtd K\in\Jtd$ (since $\mc A$ is additive). Then $[\pi_{i,\wtd I}(\mc B(\wtd I)),\pi_{i,J}(\mc A(J))]=[\pi_{i,\wtd K}(\mc B(\wtd I)),\pi_{i,K}(\mc A(J))]=\pi_{i,\wtd K}([\mc B(\wtd I),\pi_{a,K}(\mc A(J))])$ equals $0$ by the relative locality of $\mc B$.
\end{rem}

\begin{thm}\label{lb5}
If $Q=(\mc H_a,\mu,\iota)$ is a $C^*$-Frobenius algebra, then $(\mc H_a,\iota)$ can be equipped with a (necessarily unique)  extension $(\mc H_a,\mc B_Q,\iota)$ of $\mc A$ such that for each $\wtd I\in\Jtd$,
\begin{align}
\mc B_Q(\wtd I)=\{\mu L(\xi,\wtd I)|_{\mc H_a}:\xi\in\mc H_a(I)\}.	\label{eq28}
\end{align}
Moreover, $\mc B_Q$ satisfies the following properties:
\begin{itemize}
\item The commutant of $\mc B_Q(\wtd I)$ is
\begin{align}
\mc B_Q(\wtd I)'=\{\mu R(\eta,\wtd I')|_{\mc H_a}:\eta\in\mc H_a(I')\}\label{eq20}	
\end{align}
where $\wtd I'$ is the clockwise complement of $\wtd I$.
\item If $\mc H_a$ is M\"obius covariant, then $\mc B_Q$ is \textbf{M\"obius covariant}, which means that for each $g\in\UPSU$ and $\wtd I\in\Jtd$, we have
\begin{align}
g\mc B_Q(\wtd I)g^{-1}=\mc B_Q(g\wtd I).\label{eq16}	
\end{align} 
\item If $\mc A$ is a conformal net, then $\mc B_Q$ is \textbf{conformal covariant}, which means that \eqref{eq16} holds for each $g\in\Gc$ and $\wtd I\in\Jtd$.
\end{itemize}
\end{thm}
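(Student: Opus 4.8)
The plan is as follows. Since \eqref{eq28} \emph{defines} $\mc B_Q(\wtd I)$, the uniqueness is automatic, and the real content of the first two assertions is that, writing
\[a(\xi)=\mu L(\xi,\wtd I)|_{\mc H_a}\ \ (\xi\in\mc H_a(I)),\qquad b(\eta)=\mu R(\eta,\wtd I')|_{\mc H_a}\ \ (\eta\in\mc H_a(I')),\]
the set $\mc B_Q(\wtd I)=\{a(\xi):\xi\in\mc H_a(I)\}$ is a von Neumann algebra on $\mc H_a$ obeying the four conditions of Def.~\ref{lb16}, with commutant \eqref{eq20}. The key step I would aim for is: \emph{the sets $\{a(\xi):\xi\in\mc H_a(I)\}$ and $\{b(\eta):\eta\in\mc H_a(I')\}$ are mutual commutants.} Granting this, $\mc B_Q(\wtd I)$ is automatically a von Neumann algebra and \eqref{eq20} holds, and the remaining conditions come from a handful of explicit computations with $\scr E$.

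First I would record some elementary identities. Using the unitors, the state-field correspondence \eqref{eq14}, and the functoriality \eqref{eq2} applied to $\iota\in\Hom_{\mc A}(\mc H_0,\mc H_a)$, one gets $L(\iota\Omega,\wtd I)|_{\mc H_a}=\iota\boxtimes\id_a$ and $R(\iota\Omega,\wtd I')|_{\mc H_a}=\id_a\boxtimes\iota$ (up to unitors), so the unit axiom gives $a(\iota\Omega)=b(\iota\Omega)=\id_{\mc H_a}$; combining this with \eqref{eq11} and the fact that $\iota$ intertwines $\mc A$ (so $x\iota\Omega=\iota x\Omega\in\mc H_a(I)$ for $x\in\mc A(I)$), one obtains $a(x\iota\Omega)=\pi_{a,I}(x)$ for $x\in\mc A(I)$ and $b(y\iota\Omega)=\pi_{a,I'}(y)$ for $y\in\mc A(I')$, which gives the extension property and shows $\pi_{a,I'}(\mc A(I'))\subset\{b(\eta)\}$. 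Applying $a(\xi)$ to $\iota\Omega$ and again using \eqref{eq2}, \eqref{eq14} and the unit axiom yields $a(\xi)\iota\Omega=\xi$ and $b(\eta)\iota\Omega=\eta$, so $\{a(\xi)\iota\Omega:\xi\in\mc H_a(I)\}=\mc H_a(I)$ and $\{b(\eta)\iota\Omega:\eta\in\mc H_a(I')\}=\mc H_a(I')$ are dense in $\mc H_a$. Isotony of $\mc B_Q$ is immediate from isotony of $\scr E$ together with $\mc H_a(I_1)\subset\mc H_a(I_2)$ for $I_1\subset I_2$.

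The heart of the argument is the mutual-commutant claim, in two halves. \textbf{(a)} For $\zeta\in\mc H_a$ I would compute $a(\xi)b(\eta)\zeta=\mu L(\xi,\wtd I)\,\mu R(\eta,\wtd I')\zeta$ by: pushing the inner $\mu$ through $L(\xi,\wtd I)$ using \eqref{eq2}; then using that $L(\xi,\wtd I)$ commutes with $R(\eta,\wtd I')$ (the locality axiom of $\scr E$, valid since $\wtd I$ is anticlockwise to $\wtd I'$); then the associativity identity $\mu(\id_a\boxtimes\mu)=\mu(\mu\boxtimes\id_a)$; then pushing a $\mu$ back out through $R(\eta,\wtd I')$ by \eqref{eq2}. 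This lands on $\mu R(\eta,\wtd I')\,\mu L(\xi,\wtd I)\zeta=b(\eta)a(\xi)\zeta$, so $[a(\xi),b(\eta)]=0$; with the identities above this also yields relative locality. \textbf{(b)} Conversely, let $T\in\End(\mc H_a)$ commute with every $b(\eta)$. Then $T$ commutes with $\pi_{a,I'}(\mc A(I'))$, so $T\iota\in\Hom_{\mc A(I')}(\mc H_0,\mc H_a)$ and hence $\xi:=T\iota\Omega\in\mc H_a(I)$; and for every $\eta\in\mc H_a(I')$,
\[T\eta=T\,b(\eta)\iota\Omega=b(\eta)\,T\iota\Omega=\mu R(\eta,\wtd I')\xi=\mu L(\xi,\wtd I)\eta=a(\xi)\eta,\]
using \eqref{eq25} at the fourth step. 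Since $\mc H_a(I')$ is dense, $T=a(\xi)\in\mc B_Q(\wtd I)$. This proves $\mc B_Q(\wtd I)=\{b(\eta):\eta\in\mc H_a(I')\}'$, so $\mc B_Q(\wtd I)$ is a von Neumann algebra; the mirror-image argument (swapping $L\leftrightarrow R$ and $\wtd I\leftrightarrow\wtd I'$, and using $a(\xi)\iota\Omega=\xi$) gives $\{b(\eta):\eta\in\mc H_a(I')\}=\mc B_Q(\wtd I)'$, which is \eqref{eq20}. The Reeh-Schlieder property then follows, since $\iota\Omega$ generates the dense subspace $\mc H_a(I)$ under $\mc B_Q(\wtd I)$ and $\mc H_a(I')$ under $\mc B_Q(\wtd I)'$. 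I expect half (b) — together with carefully tracking the left/right conventions for $L,R,\boxtimes$ and for clockwise complements — to be the main obstacle; everything else is bookkeeping with the axioms of $\scr E$ and of $Q$.

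Finally, for the covariance statements: when $\mc H_a$ is M\"obius covariant it is dualizable (as noted right after the definition of $C^*$-Frobenius algebra), so $\mc H_a\in\Obj(\RepfA)$; then $\mu$ and $\iota$ intertwine the $\UPSU$-actions, $g\mc H_a(I)=\mc H_a(gI)$, and the M\"obius covariance \eqref{eq31} of $\scr E|_{\RepfA}$ gives $g\,a(\xi)\,g^{-1}=\mu L(g\xi,g\wtd I)|_{\mc H_a}$, whence $g\mc B_Q(\wtd I)g^{-1}=\mc B_Q(g\wtd I)$ as $\xi$ ranges over $\mc H_a(I)$. If $\mc A$ is conformal, every $\mc A$-module is conformal covariant and $\scr E$ is conformal covariant, so the same computation with \eqref{eq17} in place of \eqref{eq31} gives \eqref{eq16} for all $g\in\Gc$.
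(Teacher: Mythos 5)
Your overall route (show that $\{\mu L(\xi,\wtd I)|_{\mc H_a}\}$ and $\{\mu R(\eta,\wtd I')|_{\mc H_a}\}$ determine each other as commutants, deduce the extension axioms, and get covariance from the covariance of $\scr E$ and the fact that $\mu,\iota$ intertwine $\UPSU$ resp. $\Gc$) is exactly the paper's, and your half (b) and the covariance argument are fine. But there is a genuine gap in half (a) and in the inference drawn from it: you only prove the \emph{plain} commutation $a(\xi)b(\eta)=b(\eta)a(\xi)$, using functoriality, the commuting half of the locality axiom, and the associativity $\mu(\id_a\boxtimes\mu)=\mu(\mu\boxtimes\id_a)$ --- the Frobenius relation is never used. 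From ``$\mc B_Q(\wtd I)=\{b(\eta)\}'$'' with $'$ meaning the set of operators commuting with every $b(\eta)$, you conclude that $\mc B_Q(\wtd I)$ is ``automatically a von Neumann algebra''. That inference is false: the plain commutant of a set that is not known to be $*$-closed is a weakly closed unital algebra but need not be self-adjoint, and mutual plain-commutancy does not force self-adjointness either (on $\Cbb^2$ the algebra $\Cbb\id+\Cbb E_{12}$ equals its own plain commutant and is not a von Neumann algebra). Since self-adjointness of $\{a(\xi)\}$ is equivalent to self-adjointness of $\{b(\eta)\}$ by taking adjoints in your identity, your two halves chase each other in a circle and neither gives $*$-closedness; indeed, for a mere algebra object (no Frobenius relation) the conclusion should not be expected to hold, so a proof that never invokes that relation cannot be complete.

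The missing ingredient is precisely the paper's key point (1): \emph{adjoint} commutation. Running your diagram chase with the full strength of the axioms --- the locality axiom of $\scr E$ gives that $L(\xi,\wtd I)$ and $R(\eta,\wtd I')$ commute \emph{adjointly}, and the associativity-plus-Frobenius axiom says the square for $\mu$ commutes \emph{adjointly} --- one gets, as in diagram \eqref{eq27} with $\mc H_i=\mc H_a$, $\mu_i=\mu$, both $b(\eta)a(\xi)=a(\xi)b(\eta)$ and $b(\eta)a(\xi)^*=a(\xi)^*b(\eta)$. Now apply your half (b) to $T=a(\xi)^*$: it commutes with every $b(\eta)$, hence $a(\xi)^*=\mu L(a(\xi)^*\iota\Omega,\wtd I)|_{\mc H_a}$ with $a(\xi)^*\iota\Omega\in\mc H_a(I)$, so the set \eqref{eq28} is closed under adjoints. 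Being also a plain commutant (your (b)), it is then a weakly closed unital $*$-algebra, i.e.\ a von Neumann algebra, and the mirror argument makes \eqref{eq20} an identity of von Neumann algebras; the rest of your bookkeeping (extension property, isotony, relative locality, Reeh--Schlieder, covariance) then goes through as you wrote it.
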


\begin{proof}
That $\mc B_Q$ is an extension is due to Thm. 4.7-(a,b,c,d) of \cite{Gui21b}. Formula \eqref{eq20} is due to \cite[Prop. 4.5]{Gui21b}. Note that although in \cite{Gui21b} we only considered M\"obius covariant modules, the proof of these results do not rely on M\"obius covariance. The key idea is to show for each fixed $\wtd I$ that  (1) $\mu L(\xi,\wtd I)|_{\mc H_a}$ commutes adjointly with $\mu R(\eta,\wtd I')|_{\mc H_a}$ for all $\xi\in\mc H_a(I)$ and $\eta\in\mc H_a(I')$ by diagram \eqref{eq27} (setting $\mc H_i=\mc H_a$ and $\mu_i=\mu$), and (2) any operator $X$ (resp. $Y$) on $\mc H_a$ commuting with all $\mu R(\eta,\wtd I')|_{\mc H_a}$ (resp. all $\mu L(\xi,\wtd I)|_{\mc H_a}$) satisfies $X\iota\Omega\in\mc H_a(I)$ and $X=\mu L(X\iota\Omega,\wtd I)|_{\mc H_a}$ (resp. $Y\iota\Omega\in\mc H_a(I')$ and $Y=\mu R(Y\iota\Omega,\wtd I')$). See \cite[Prop. 4.3, 4.5]{Gui21b}.

The proof of M\"obius or conformal covariance follows from that of the categorical extension $\scr E$ (see \eqref{eq16} and \eqref{eq17}) and the fact that the morphism $\mu$ intertwines the actions of $\UPSU$ or $\Gc$. 
\end{proof}

\begin{rem}\label{lb27}
Note that the von Neumann algebra $\mc B_Q(\wtd I)$ is not just generated by all $\mu L(\xi,\wtd I)|_{\mc H_a}$. It is exactly the set of all such operators. Also, for each $X\in\mc B_Q(\wtd I)$ and $\xi\in\mc H_a(I)$, 
\begin{align}
X=\mu L(\xi,\wtd I)|_{\mc H_a}\quad\Leftrightarrow\quad\xi= X\iota\Omega.\label{eq32}	
\end{align}
Indeed, assume the left, then $X\iota\Omega=\mu L(\xi,\wtd I)\iota\Omega=\mu(\id_a\boxtimes\iota)L(\xi,\wtd I)\Omega=\xi$. Assume the right, then $X$ and $\mu L(\xi,\wtd I)|_{\mc H_a}$ both send $\iota\Omega$ to $\xi$. So they are equal since $\iota\Omega$ is separating for $\mc B_Q(\wtd I)$.

From this observation, we see that $Q$ is uniquely determined by $\mc B_Q$ since $\mu$ must send each $L(X\iota\Omega,\wtd I)\eta$ (where $X\in\mc B_Q(\wtd I),\eta\in\mc H_a$) to $X\eta$.
\end{rem}

\begin{rem}
Since $\pi_{a,I}(\mc A(I))\subset\mc B_Q(\wtd I)$, for each $x\in\mc A(I)$, $\pi_{a,I}(x)$ can be written in the form $\mu L(\xi,\wtd I)|_{\mc H_a}$. In fact, as $\pi_{a,I}(x)\iota\Omega=\iota x\Omega\in\mc H_a(I)$, by \eqref{eq32} we have
\begin{align}
	\pi_{a,I}(x)=\mu L(\iota x\Omega,\wtd I)|_{\mc H_a}.\label{eq7}	
\end{align}
\end{rem}

We recall the following well known facts about von Neumann algebra representations.
\begin{lm}\label{lb3}
Let $\fk S$ be a set. Let $(x_s)_{s\in\fk S}$ and $(y_s)_{s\in\fk S}$ be collections (labeled by $\fk S$) of bounded linear operators on Hilbert spaces $\mc H$ and $\mc K$ respectively. Let $M$ be the von Neumann algebra on $\mc H$ generated by $\{x_s,x_s^*:s\in \fk S\}$. Suppose that there is a set $\fk T$ of bounded  linear maps from $\mc H$ to $\mc K$ such that $\Span_{T\in\fk T}(T\mc H)$ is dense in $\mc K$, and that for each $s\in \fk S$ and $T\in\fk T$ we have
	\begin{align*}
		Tx_s=y_sT,\quad Tx_s^*=y_s^*T.	
	\end{align*}
	Then there is a unique  (normal) representation $\pi$ of $M$ on $\mc K$ satisfying $\pi(x_s)=y_s$ for each $s\in\fk S$.
\end{lm}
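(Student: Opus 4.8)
The plan is to prove Lemma \ref{lb3} by a standard bootstrap argument: first construct $\pi$ on the $*$-algebra generated by the $x_s$, check it is well-defined and bounded, then extend to the weak closure by continuity. The key input allowing everything to work is the density of $\Span_{T\in\fk T}(T\mc H)$ in $\mc K$, which will let us transport relations from $\mc H$ to $\mc K$ without loss.

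First I would observe that the hypotheses $Tx_s=y_sT$ and $Tx_s^*=y_s^*T$ together imply, for any $*$-polynomial $p$ in non-commuting variables, that $Tp(x) = p(y)T$ on $\mc H$, where $p(x)$ and $p(y)$ denote the obvious substitutions into $M$ and into $\End(\mc K)$ respectively. (One also gets $T^*$-versions, but those are not needed.) In particular, if $p(x)=0$ then $p(y)T=0$ for every $T\in\fk T$, hence $p(y)$ vanishes on the dense subspace $\Span_{T\in\fk T}(T\mc H)$, hence $p(y)=0$. This shows the map $p(x)\mapsto p(y)$ is a well-defined $*$-homomorphism $\pi_0$ from the (dense, unital) $*$-subalgebra $M_0\subset M$ generated by $\{x_s\}$ into $\End(\mc K)$, and $\pi_0(x_s)=y_s$. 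Uniqueness of any normal extension is immediate since $M_0$ is $\sigma$-weakly dense in $M$ and normal representations are $\sigma$-weakly continuous.

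Next I would upgrade $\pi_0$ to a normal representation of $M$. The cleanest route is to pass to the bicommutant. Since $\pi_0$ is a unital $*$-homomorphism of the $C^*$-algebra $\overline{M_0}^{\|\cdot\|}$, it is norm-contractive (in fact norm-decreasing on any $C^*$-algebra), so it extends to a $*$-homomorphism on the norm closure, which is a unital sub-$C^*$-algebra of $M$ whose $\sigma$-weak closure is still $M$. Then I would invoke the standard fact (e.g. via Kaplansky density together with the existence of a normal extension for a representation generated by a single cyclic vector, or more directly the structure theory of von Neumann algebra representations) that a $*$-representation $\rho$ of a $C^*$-algebra $C$ acting on a Hilbert space such that $\rho$ is spatially implemented on a $\sigma$-weakly dense subalgebra by maps $T$ with dense range extends to a normal representation of the von Neumann algebra $C'' = M$. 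In practice the reference \cite{Gui21a} or a standard text (e.g. Takesaki) is cited here; the argument is: decompose $\mc K$ into a direct sum of cyclic subspaces for $\pi_0(M_0)$, use the intertwiners $T$ to dominate each cyclic vector state by a vector state on $\mc H$ restricted to $M_0$ (hence it is $\sigma$-weakly continuous on $M_0$, hence normal), and normality of each cyclic summand gives normality of $\pi$.

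**The main obstacle** is precisely this last step — verifying that the spatially-defined representation is \emph{normal}, i.e. $\sigma$-weakly continuous, rather than merely a $C^*$-algebraic representation of $M$ viewed abstractly. Everything before that is bookkeeping. The resolution I expect to use is to exploit the intertwiners directly: for $\zeta = T\xi$ with $T\in\fk T$, $\xi\in\mc H$, the functional $m\mapsto \bk{\pi(m)\zeta|\zeta}$ on $M_0$ equals $m\mapsto\bk{m\xi|T^*T\xi}$, which is a (positive, bounded) normal functional on $M$; since vectors of the form $T\xi$ have dense span and normality of the relevant GNS-type summands is stable under the direct-sum decomposition, $\pi$ is normal on all of $M$. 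This is the content being abbreviated by calling Lemma \ref{lb3} "well known," and for the write-up it suffices to record the $*$-homomorphism construction and cite the standard normality argument.
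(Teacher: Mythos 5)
Your overall route (define $\pi_0$ on the $*$-algebra $M_0$ of $*$-polynomials in the $x_s$ by $p(x)\mapsto p(y)$, extend by continuity, then prove normality) is genuinely different from the paper's, which runs a maximality argument: a maximal family $\fk E$ of mutually orthogonal projections $e=U_eU_e^*$ in the commutant of $\{y_s,y_s^*\}$, with partial isometries $U_e:\mc H\rightarrow\mc K$ intertwining $x_s,x_s^*$ with $y_s,y_s^*$, where maximality is enforced by polar-decomposing $(\id-E)T$; one then sets $\pi(x)=\sum_e U_e x U_e^*$, which is bounded and normal by construction. Your route can be made to work, but as written it has a gap at the continuity step.

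The gap: you justify contractivity of $\pi_0$ by calling it ``a unital $*$-homomorphism of the $C^*$-algebra $\overline{M_0}^{\lVert\cdot\rVert}$''. But $\pi_0$ is only defined on the (generally non-complete) $*$-algebra $M_0$, and a $*$-homomorphism defined on a dense $*$-subalgebra of a $C^*$-algebra need not be bounded (evaluation of polynomials at a point outside $[0,1]$, viewed on the polynomial $*$-subalgebra of $C[0,1]$, is an unbounded $*$-homomorphism into $\Cbb$); so invoking automatic contractivity is circular --- you need the bound $\lVert p(y)\rVert\le\lVert p(x)\rVert$ on $M_0$ before you can even speak of a homomorphism of the norm closure. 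Note that the naive estimate on a single vector only gives $\lVert\pi_0(a)T\xi\rVert=\lVert Ta\xi\rVert\le\lVert T\rVert\,\lVert a\rVert\,\lVert\xi\rVert$, which is not a bound against $\lVert T\xi\rVert$. The repair is exactly the mechanism driving the paper's proof: polar-decompose the intertwiners. Since $T^*T$, and more generally each $T_j^*T_i$, commutes with all $x_s,x_s^*$ and hence lies in $M'$, writing $T=U\lvert T\rvert$ with $\lvert T\rvert\in M'$ gives $\lVert Ta\xi\rVert=\lVert Ua\lvert T\rvert\xi\rVert\le\lVert a\rVert\,\lVert T\xi\rVert$; for a general $\zeta=\sum_{i=1}^nT_i\xi_i$ one amplifies to $\mc H^{\oplus n}$ and polar-decomposes the row operator $(T_1,\dots,T_n)$ --- equivalently, one carries out your own last-paragraph observation in matrix form: $m\mapsto\sum_{i,j}\bk{m\xi_j|T_j^*T_i\xi_i}$ is a positive normal functional on $M$ of norm $\lVert\zeta\rVert^2$ (since $[T_j^*T_i]_{i,j}$ is a positive element of $M'\otimes\End(\Cbb^n)$), which yields both the contraction bound on $M_0$ and, afterwards, the $\sigma$-weak continuity needed for normality. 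With that inserted, the rest of your sketch (norm limits of normal functionals for arbitrary cyclic vectors, direct-sum decomposition) goes through.
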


Consequently, if $x_{s_1}=x_{s_2}$, then we must have $y_{s_1}=y_{s_2}$.

\begin{proof}
	Uniqueness is obvious. As for the existence, let $N$ be the  commutant of $\{y_s,y_s^*:s\in\fk S\}$. By Zorn's lemma, there is a maximal set $\fk E$ of mutually orthogonal projections in $N$ such that for each $e\in\fk E$ there is a partial isometry $U_e:\mc H\rightarrow\mc K$ such that $U_eU_e^*=e$, and that $U_ex_s=y_sU_e,U_ex_s^*=y_s^*U_e$ for each $s\in\fk S$.  Let $E=\sum_{e\in\fk E}e$. We claim that $E=\id_{\mc K}$. Then $\pi(x)=\sum_{e\in\fk E}U_exU_e^*$ ($x\in M$) is the desired representation. 
	
	If $E\neq \id_{\mc K}$, then there is $T\in\fk T$ such that $T_1:=(\id-E)T\neq 0$. Note that the following diagram commutes adjointly
	\begin{equation*}
		\begin{tikzcd}
			\mc H\arrow[r,"x_s"]\arrow[d,"T_1"']&\mc H\arrow[d,"T_1"]\\
			\mc K\arrow[r,"y_s"]&\mc K
		\end{tikzcd}	
	\end{equation*}
	Thus, if we polar-decompose $T_1$ as $T_1=U_1H_1$ where $U_1$ is the  partial isometry, then the above diagram also commutes adjointly if $T_1$ is replaced by $U_1$. Then the set $\fk S\cup\{U_1U_1^*\}$ is larger than $\fk S$ but satisfies the condition described in the first paragraph. This is a contradiction.
\end{proof}

The following theorem is the Connes-fusion version of \cite[Lem. 3.1]{EP03} and \cite[Prop. 3.24]{BKLR15}.

\begin{thm}\label{lb6}
If $(\mc H_i,\mu_i)$ is a left $Q$-module, then the $\mc A$-module $\mc H_i$ can be equipped with a  (necessarily uniquely) $(\mc B_Q,\mc A)$-module structure $(\mc H_i,\pi_i)$ such that for all $\wtd I\in\Jtd,\xi\in\mc H_a(I)$,
\begin{align}\label{eq4}
\pi_{i,\wtd I}\big(\mu L(\xi,\wtd I)|_{\mc H_a}\big)=\mu_i L(\xi,\wtd I)|_{\mc H_i}.
\end{align}

Conversely, if $(\mc H_i,\pi_i)$ is a $(\mc B_Q,\mc A)$-module, then the associated $\mc A$-module $\mc H_i$ is equipped with a (necessarily unique) left $Q$-module structure $(\mc H_i,\mu_i)$ which gives rise to $(\mc H_i,\pi_i)$ via the relation \eqref{eq4}.
\end{thm}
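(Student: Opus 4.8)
The plan is to build the correspondence in both directions using Lemma \ref{lb3} as the engine for transferring von Neumann algebra representations from $\mc H_a$ to $\mc H_i$.

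\emph{From left $Q$-modules to $(\mc B_Q,\mc A)$-modules.} Fix $\wtd I\in\Jtd$. By Thm. \ref{lb5}, $\mc B_Q(\wtd I)$ consists exactly of the operators $\mu L(\xi,\wtd I)|_{\mc H_a}$ with $\xi\in\mc H_a(I)$, and by Rem. \ref{lb27} the vector $\xi=X\iota\Omega$ is determined by $X=\mu L(\xi,\wtd I)|_{\mc H_a}$. I would apply Lemma \ref{lb3} with $\mc H=\mc H_a$, $\mc K=\mc H_i$, the family $(x_\xi)_{\xi\in\mc H_a(I)}$ given by $x_\xi=\mu L(\xi,\wtd I)|_{\mc H_a}$, the candidate operators $y_\xi=\mu_i L(\xi,\wtd I)|_{\mc H_i}$, and the intertwiners $\fk T=\{L(\eta,\wtd I')|_{\mc H_a\to\mc H_a\boxtimes\mc H_a},\dots\}$ — more precisely the maps $L(\eta,\wtd I')$ for $\eta\in\mc H_a(I')$ composed with $\mu$, or simply $R(\zeta,\wtd I)$-type operators landing in $\mc H_i$ from $\mc H_a$; one must check that their ranges span a dense subspace of $\mc H_i$ (this follows from Lemma \ref{lb2}: a unitary vector in $\mc H_a(I)$ gives a unitary $\mc H_i\to\mc H_a\boxtimes\mc H_i$, composed with $\mu_i$, together with the unit axiom $\mu_i(\iota\boxtimes\id_i)=\id_i$). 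The key algebraic identities to verify are $Tx_\xi=y_\xi T$ and $Tx_\xi^*=y_\xi^* T$ on these intertwiners; the first should reduce to the associativity/Frobenius diagram \eqref{eq3} together with the fusion relations in Prop. \ref{lb1} and the locality axiom, and the adjoint relation uses the adjoint-commutativity built into \eqref{eq3}. Lemma \ref{lb3} then produces a normal representation $\pi_{i,\wtd I}$ of $\mc B_Q(\wtd I)$ on $\mc H_i$ satisfying \eqref{eq4}; its well-definedness (that $\mu L(\xi_1,\wtd I)|_{\mc H_a}=\mu L(\xi_2,\wtd I)|_{\mc H_a}\Rightarrow\mu_i L(\xi_1,\wtd I)|_{\mc H_i}=\mu_i L(\xi_2,\wtd I)|_{\mc H_i}$) is exactly the "consequently" clause of Lemma \ref{lb3}. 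The extension property \eqref{eq6} follows by feeding $\xi=\iota x\Omega$ into \eqref{eq4} and using \eqref{eq7} on the $\mc H_a$ side and \eqref{eq5} together with the unit property on the $\mc H_i$ side: $\pi_{i,\wtd I}(\pi_{a,I}(x))=\mu_i L(\iota x\Omega,\wtd I)|_{\mc H_i}=\mu_i(\id_a\boxtimes\id_i)(\iota\boxtimes\id_i)\pi_{i,I}(x)=\pi_{i,I}(x)$ after unwinding \eqref{eq11}. Compatibility $\pi_{i,\wtd J}|_{\mc B_Q(\wtd I)}=\pi_{i,\wtd I}$ for $\wtd I\subset\wtd J$ is immediate from isotony of $\scr E$ and the fact that $\mc B_Q(\wtd I)\subset\mc B_Q(\wtd J)$ is realized by the same vectors.

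\emph{From $(\mc B_Q,\mc A)$-modules to left $Q$-modules.} Given $(\mc H_i,\pi_i)$, I would define $\mu_i\in\Hom(\mc H_a\boxtimes\mc H_i,\mc H_i)$ by prescribing, for $\xi\in\mc H_a(I)$ and $\eta\in\mc H_i$,
\begin{align*}
\mu_i L(\xi,\wtd I)\eta=\pi_{i,\wtd I}\big(\mu L(\xi,\wtd I)|_{\mc H_a}\big)\eta,
\end{align*}
and check this is well-defined and bounded. Well-definedness on the dense span $L(\mc H_a(I),\wtd I)\mc H_i$ (density axiom (d)) requires that a linear relation $\sum_k L(\xi_k,\wtd I)\eta_k=0$ forces $\sum_k\pi_{i,\wtd I}(\mu L(\xi_k,\wtd I)|_{\mc H_a})\eta_k=0$; I would deduce this by computing inner products $\bk{L(\xi,\wtd I)\eta|L(\xi_k,\wtd I)\eta_k}$ on both $\mc H_a\boxtimes\mc H_i$ and $\mc H_i$ using Lemma \ref{lb4}-type identities (expressing $L(\xi,\wtd I)^*L(\xi_k,\wtd I)|_{\mc H_0}\in\mc A(I)$, then applying $\pi_{i,\wtd I}\circ\pi_{a,I}=\pi_{i,I}$), together with the Frobenius relation to move $\mu^*$ past $L$. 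Boundedness with the same norm then follows from the $C^*$ estimates and the normality of $\pi_{i,\wtd I}$. That this $\mu_i$ is $\mc A$-intertwining uses functoriality \eqref{eq2} and \eqref{eq11}; the unit axiom $\mu_i(\iota\boxtimes\id_i)=\id_i$ follows from \eqref{eq7} and the extension property of $\pi_i$; the associativity/Frobenius diagram \eqref{eq3} follows from the multiplicativity of the representation $\pi_{i,\wtd I}$ combined with the fusion relations in Prop. \ref{lb1}(a) and the associativity of $\mu$.

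\emph{Uniqueness and inverse-ness.} Uniqueness in each direction: a $(\mc B_Q,\mc A)$-structure satisfying \eqref{eq4} is forced on all of $\mc B_Q(\wtd I)$ since every element has the form $\mu L(\xi,\wtd I)|_{\mc H_a}$; a left $Q$-structure giving rise to $\pi_i$ via \eqref{eq4} is forced on the dense subspace $L(\mc H_a(I),\wtd I)\mc H_i$. That the two constructions are mutually inverse is then formal: both round-trips fix \eqref{eq4}, and both $\mu_i$ and $\pi_i$ are determined by \eqref{eq4}.

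\emph{Main obstacle.} I expect the hard part to be the converse direction — specifically, proving that the prescription for $\mu_i$ is \emph{well-defined} (respects all linear relations among the $L(\xi,\wtd I)\eta$) and \emph{bounded}. This is where one genuinely needs the inner-product computations via Lemma \ref{lb4} and the Frobenius relation, rather than formal nonsense; the forward direction, by contrast, is essentially a clean application of Lemma \ref{lb3}. A secondary subtlety is checking $\arg_I$-independence and consistency across different $\wtd I$ (using isotony and additivity of $\mc A$ as in Rem. \ref{lb17}), but this should be routine given the isotony axiom of $\scr E$.
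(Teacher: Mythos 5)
Your overall architecture matches the paper's: the forward direction is Lemma \ref{lb3} applied with intertwiners of the form (module action)$\circ$($R$-operator localized in the clockwise complement) — in the paper these are $\mu_i R(\eta,\wtd J)|_{\mc H_a}$ with $\eta\in\mc H_i(J)$, $\wtd J$ clockwise to $\wtd I$, whose adjoint intertwining with $\mu L(\xi,\wtd I)|_{\mc H_a}$ and $\mu_i L(\xi,\wtd I)|_{\mc H_i}$ is exactly the big diagram \eqref{eq27}, and surjectivity comes from taking $\eta$ unitary (so $R(\eta,\wtd J):\mc H_a\rightarrow\mc H_a\boxtimes\mc H_i$ is unitary) plus surjectivity of $\mu_i$. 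Your description of $\fk T$ is garbled on this point: $\mu L(\eta,\wtd I')$ has codomain $\mc H_a$, not $\mc H_i$, and the unitary vector must be taken in $\mc H_i(J)$ (clockwise side), not in $\mc H_a(I)$; these are fixable slips, and the rest of the forward direction (extension property via \eqref{eq7}, compatibility, uniqueness via Rem. \ref{lb27}) is as in the paper.

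The genuine gap is at the step you yourself call the crux: well-definedness and boundedness of $\mu_i$ in the converse direction. Exact comparison of Gram matrices cannot work: expanding $\big\|\sum_k\pi_{i,\wtd I}\big(\mu L(\xi_k,\wtd I)|_{\mc H_a}\big)\eta_k\big\|^2$ produces $\pi_{i,\wtd I}$ applied to $L(\xi_k,\wtd I)^*\mu^*\mu L(\xi_l,\wtd I)|_{\mc H_a}$, which lies in $\mc B_Q(\wtd I)$ but \emph{not} in $\pi_{a,I}(\mc A(I))$, so Lemma \ref{lb4}, the extension property, and the Frobenius relation give you no way to evaluate it in terms of the $\mc A$-module data; the two Gram matrices are in fact not equal. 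What is needed is a domination rather than an identity: the Pimsner--Popa-type matrix inequality $\big[L(\xi_k,\wtd I)^*\mu^*\mu L(\xi_l,\wtd I)|_{\mc H_a}\big]_{k,l}\le\|\mu\|^2\big[\pi_{a,I}\big(L(\xi_k,\wtd I)^*L(\xi_l,\wtd I)|_{\mc H_0}\big)\big]_{k,l}$ in $\mc B_Q(\wtd I)\otimes\End(\Cbb^N)$, proved by a direct computation on $\mc H_a$ using Lemma \ref{lb4}, then transported through the positive normal map $\pi_{i,\wtd I}\otimes\id_{\Cbb^N}$ and identified on the right with $\big[L(\xi_k,\wtd I)^*L(\xi_l,\wtd I)|_{\mc H_i}\big]_{k,l}$ via \eqref{eq6} and \eqref{eq8}. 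This positivity/matrix-inequality mechanism, with the constant $\lambda=\|\mu\|^2$, is the missing idea; ``$C^*$ estimates and normality'' does not supply it as stated. A secondary gap: you attribute the whole of \eqref{eq3} for $\mu_i$ to multiplicativity of $\pi_{i,\wtd I}$, Prop. \ref{lb1}(a), and associativity of $\mu$; those yield only the associativity half. The Frobenius (adjoint) half requires an extra argument — in the paper, one observes that diagram \eqref{eq27} now commutes, that $X^*\in\mc B_Q(\wtd I)$ is again of the form $\mu L(\xi',\wtd I)|_{\mc H_a}$ by \eqref{eq28} so the big diagram commutes adjointly, and then by density of fusion products the lower-right cell must commute adjointly — and nothing in your sketch plays this role.
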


We divide the proof into two parts.

\begin{proof}[Part 1 of the proof]
We choose a left $Q$-module $(\mc H_i,\mu_i)$ and construct the $(\mc B_Q,\mc A)$-module $(\mc H_i,\pi_i)$. Choose $\wtd J\in\Jtd$ clockwise to $\wtd I$ and $\eta\in\mc H_i(J)$.
\begin{equation}\label{eq27}
\begin{tikzcd}
\mc H_a\arrow[rr,"{R(\eta,\wtd J)}"]\arrow[d,"{L(\xi,\wtd I)}"'] &&\mc H_a\boxtimes\mc H_i\arrow[rr,"\mu_i"]\arrow[d,"{L(\xi,\wtd I)}"']&&\mc H_i\arrow[d,"{L(\xi,\wtd I)}"']\\
\mc H_a\boxtimes\mc H_a\arrow[rr,"{R(\eta,\wtd J)}"]\arrow[d,"\mu"']&&\mc H_a\boxtimes\mc H_a\boxtimes\mc H_i\arrow[rr,"\id_a\boxtimes\mu_i"]\arrow[d,"\mu\boxtimes\id_i"']&&\mc H_a\boxtimes\mc H_i\arrow[d,"\mu_i"']\\
\mc H_a\arrow[rr,"{R(\eta,\wtd J)}"]&&\mc H_a\boxtimes\mc H_i\arrow[rr,"\mu_i"]&&\mc H_i
\end{tikzcd}
\end{equation}
By the locality and the functorality of $\scr E$ (as well as $(F\boxtimes G)^*=F^*\boxtimes G^*$), together with the associativity and Frobenius relation for $\mu_i$, each of the four small diagrams commutes adjointly. Thus, the largest diagram commutes adjointly. Note that $\mu_i$ is surjective since $\mu_i(\iota\boxtimes\id_i)=\id_i$. Thus, if we choose $\eta$ to be unitary, then $\mu_i R(\eta,\wtd J):\mc H_a\rightarrow\mc H_i$ is surjective. Therefore, by Lemma \ref{lb3}, there is a unique representation $\pi_{i,\wtd I}$ of $\mc B_Q(\wtd I)$ on $\mc H_a$ such that \eqref{eq4} holds. The compatibility condition is easy to check.

It remains to check the extension property \eqref{eq6} on $\mc A(I)$. Choose any $x\in\mc A(I)$ and recall \eqref{eq7}. Then
\begin{align}
&\pi_{i,\wtd I}\circ\pi_{a,I}(x)=\pi_{i,\wtd I}(\mu L(\iota x\Omega,\wtd I)|_{\mc H_a})=\mu_i L(\iota x\Omega,\wtd I)|_{\mc H_i}\nonumber\\
=&\mu_i(\iota\boxtimes \id_i) L(x\Omega,\wtd I)|_{\mc H_i}=L(x\Omega,\wtd I)|_{\mc H_i}=\pi_{i,I}(x).\label{eq12}
\end{align}
\end{proof}

\begin{proof}[Part 2 of the proof]
We choose a $(\mc B_Q,\mc A)$-module $(\mc H_i,\pi_i)$ and define the left $Q$-module $(\mc H_i,\mu_i)$.

Step 1. Let $\lambda\geq 0$ be $\lVert\mu\lVert^2$, the square operator norm of $\mu$. Choose any $\wtd I\in\Jtd$. Then for any $\xi_1,\dots,\xi_N\in\mc H_i(I)$, we have the following Pimsner-Popa inequality
\begin{align}
\Big[L(\xi_k,\wtd I)^*\mu^*\mu L(\xi_l,\wtd I)\big|_{\mc H_a}\Big]_{k,l}\leq \lambda \Big[\pi_{a,I}\big(L(\xi_k,\wtd I)^*L(\xi_l,\wtd I)\big|_{\mc H_0}\big)\Big]_{k,l}	\label{eq9}
\end{align}
for the elements of $\mc B_Q(\wtd I)\otimes \End(\Cbb^N)$. Indeed, choose any $\psi_\blt=(\psi_1,\dots,\psi_N)\in\mc H_a\otimes\Cbb^N$, then by Lemma \ref{lb4},
\begin{align*}
&\big\langle \big[L(\xi_k,\wtd I)^*\mu^*\mu L(\xi_l,\wtd I)|_{\mc H_a}\big]_{k,l}\cdot \psi_\blt\big|\psi_\blt \big\rangle	=\big\lVert \sum_{l=1}^N \mu L(\xi_l,\wtd I)\psi_l \big\lVert^2\\
\leq&\lambda\big\lVert \sum_{l=1}^N L(\xi_l,\wtd I)\psi_l \big\lVert^2=\lambda\big\langle \big[L(\xi_k,\wtd I)^*L(\xi_l,\wtd I)|_{\mc H_a}\big]_{k,l}\cdot \psi_\blt\big|\psi_\blt \big\rangle\\
\xlongequal{\eqref{eq8}}&\lambda\big\langle \big[\pi_{a,I}(L(\xi_k,\wtd I)^*L(\xi_l,\wtd I)|_{\mc H_0})\big]_{k,l}\cdot \psi_\blt\big|\psi_\blt \big\rangle.
\end{align*}

Apply $\pi_{i,\wtd I}\otimes\id_{\Cbb^N}$ to both sides of \eqref{eq9} and notice \eqref{eq6}, we get
\begin{align*}
\Big[\pi_{i,\wtd I}\big(L(\xi_k,\wtd I)^*\mu^*\mu L(\xi_l,\wtd I)\big|_{\mc H_a}\big)\Big]_{k,l}\leq \lambda \Big[\pi_{i,I}\big(L(\xi_k,\wtd I)^*L(\xi_l,\wtd I)\big|_{\mc H_0}\big)\Big]_{k,l}.
\end{align*}
By \eqref{eq8} again, this is equivalent to
\begin{align}
	\Big[\pi_{i,\wtd I}\big(\mu L(\xi_k,\wtd I)\big|_{\mc H_a}\big)^*\pi_{i,\wtd I}\big(\mu L(\xi_l,\wtd I)\big|_{\mc H_a}\big)\Big]_{k,l}\leq \lambda \Big[L(\xi_k,\wtd I)^*L(\xi_l,\wtd I)\big|_{\mc H_i}\Big]_{k,l}.
\end{align}
Therefore, we have a bounded linear map
\begin{gather}\label{eq10}
\begin{gathered}
\mu_{i,\wtd I}:\mc H_a\boxtimes\mc H_i\rightarrow\mc H_i\\
L(\xi,\wtd I)\eta\mapsto \pi_{i,\wtd I}\big(\mu L(\xi,\wtd I)\big|_{\mc H_a}\big)\eta
\end{gathered}	
\end{gather}
($\forall\xi\in\mc H_a(I),\eta\in\mc H_j$) with norm $\leq \sqrt{\lambda}=\lVert\mu\lVert$.

Step 2. Clearly $\mu_{i,\wtd I}=\mu_{i,\wtd J}$ when $\wtd I\subset \wtd J$, hence when $\wtd K\subset\wtd I,\wtd J$ for some $\wtd K$, and hence for all $\wtd I,\wtd J$. Therefore the map \eqref{eq10} is independent of  $\wtd I$ and hence can be written as $\mu_{i}$.

To see that $\mu_i$ is an $\mc A$-module homomorphism, we choose any $x\in\mc A(I)$, and note that by \eqref{eq11} and the fact that $\mu$ is an $\mc A$-module homomorphism, $\mu_i$ sends $xL(\xi,\wtd I)\eta=L(x\xi,\wtd I)\eta$ to
\begin{align*}
&\pi_{i,\wtd I}(\mu L(x\xi,\wtd I)|_{\mc H_a})\eta=\pi_{i,\wtd I}(\mu xL(\xi,\wtd I)|_{\mc H_a})\eta=\pi_{i,\wtd I}(\pi_{a,I}(x)\mu L(\xi,\wtd I)|_{\mc H_a})\eta\\
\xlongequal{\eqref{eq6}}&\pi_{i,I}(x)\pi_{i,\wtd I}(\mu L(\xi,\wtd I)|_{\mc H_a})\eta=\pi_{i,I}(x)\mu_i(L(\xi,\wtd I)\eta).
\end{align*}

To check that $\mu_i$ satisfies the unit property, note that by the argument in \eqref{eq12}, we have
\begin{align*}
\pi_{i,\wtd I}\circ\pi_{a,I}(x)=\mu_i(\iota\boxtimes \id_i) L(x\Omega,\wtd I)|_{\mc H_i},\qquad \pi_{i,I}(x)=L(x\Omega,\wtd I)|_{\mc H_i}.
\end{align*}
Since $\pi_{i,\wtd I}\circ\pi_{a,I}(\id)=\pi_{i,I}(\id)=\id_i$, we obtain $\mu_i(\iota\boxtimes \id_i)=\id_i$.

Finally, we check the associativity and the Frobenius relation. Choose any $\xi_1,\xi_2\in\mc H_a(I)$ and $\eta\in\mc H_i$. Then
\begin{align*}
&\pi_{i,\wtd I}(\mu L(\xi_1,\wtd I)|_{\mc H_a})\pi_{i,\wtd I}(\mu L(\xi_2,\wtd I)|_{\mc H_a})\eta=\mu_i L(\xi_1,\wtd I)\mu_i L(\xi_2,\wtd I)\eta\\
=&\mu_i(\id_a\boxtimes\mu_i)L(\xi_1,\wtd I)L(\xi_2,\wtd I)\eta	
\end{align*}
equals
\begin{align*}
&\pi_{i,\wtd I}(\mu L(\xi_1,\wtd I)\mu L(\xi_2,\wtd I)|_{\mc H_0})\eta=\pi_{i,\wtd I}(\mu(\id_a\boxtimes\mu) L(\xi_1,\wtd I) L(\xi_2,\wtd I)|_{\mc H_0})\eta\\
=& \pi_{i,\wtd I}(\mu(\mu\boxtimes\id_a) L(L(\xi_1,\wtd I)\xi_2,\wtd I)|_{\mc H_0})	=\pi_{i,\wtd I}(\mu L(\mu L(\xi_1,\wtd I)\xi_2,\wtd I)|_{\mc H_0})\eta\\
=&\mu_i L(\mu L(\xi_1,\wtd I)\xi_2,\wtd I)\eta=\mu_i(\mu\boxtimes \id_i) L(L(\xi_1,\wtd I)\xi_2,\wtd I)\eta\\
=&\mu_i(\mu\boxtimes \id_i) L(\xi_1,\wtd I)L(\xi_2,\wtd I)\eta
\end{align*}
where the naturality \eqref{eq2} is used many times. Thus, by the density of fusion product, we must have $\mu_i(\id_a\boxtimes\mu_i)=\mu_i(\mu\boxtimes\id_i)$.

We now know that in \eqref{eq27}, the lower right small diagram commutes, and the other three commute adjointly. Therefore, the largest diagram commutes. Namely, for $X=\mu L(\xi,\wtd I)|_{\mc H_a}$, we have $\mu_iR(\eta)\cdot  X=\pi_{i,\wtd I}(X)\cdot \mu_iR(\eta)$. By \eqref{eq28}, $X^*=\mu L(\xi',\wtd I)|_{\mc H_a}$ for some $\xi'\in\mc H_a(I)$. Therefore  $\mu_iR(\eta)\cdot X^*=\pi_{i,\wtd I}(X)^*\cdot \mu_iR(\eta)$, which is equivalent to that the largest diagram of \eqref{eq27} commutes adjointly. Therefore, in \eqref{eq27},  the two paths $\rightarrow \uparrow \rightarrow\uparrow$ and $\rightarrow \rightarrow \uparrow \uparrow$ from the lower left corner $\mc H_a$  to the upper right corner $\mc H_i$ are equal. Thus, by the density of fusion product, the lower right cell must commute adjointly. This proves the Frobenius relation.
\end{proof}

Let us formulate the above theorem in a more categorical way. Let $\RepL(Q)$ be the $C^*$-category of left $Q$-modules. If $(\mc H_i,\mu_i)$ and $(\mc H_j,\mu_j)$ are left $Q$-modules, then 
\begin{align*}
\HomL_Q(\mc H_i,\mc H_j)=\{T\in\Hom_{\mc A}(\mc H_i,\mc H_j):T\mu_i=\mu_j(\id_a\boxtimes T)\}.
\end{align*}
(It is easy to check that $T\in\HomL_Q(\mc H_i,\mc H_j)\Leftrightarrow T^*\in\HomL_Q(\mc H_j,\mc H_i)$.) If $\mc B$ is an extension of $\mc A$, we let $\Rep(\mc B,\mc A)$ be the $C^*$-category of $(\mc B,\mc A)$-modules such that for $(\mc B,\mc A)$-modules $(\mc H_i,\pi_i)$ and $(\mc H_j,\pi_j)$, the space of morphims is 
\begin{align*}
\Hom_{\mc B}(\mc H_i,\mc H_j)=\{T\in\Hom(\mc H_i,\mc H_j):T\pi_{i,\wtd I}(X)=\pi_{j,\wtd I}(X)T\text{ for all }\wtd I\in\Jtd,X\in\mc B(\wtd I)\}.	
\end{align*} 
It is clear that
\begin{align*}
\Hom_{\mc B}(\mc H_i,\mc H_j)\subset \Hom_{\mc A}(\mc H_i,\mc H_j).	
\end{align*}

\begin{Mthm}\label{lb12}
Let $Q=(\mc H_a,\mu,\iota)$ be a $C^*$-Frobenius algebra in $\RepA$. For each left $Q$-module $(\mc H_i,\mu_i)$, define the corresponding $(\mc B_Q,\mc A)$-module $(\mc H_i,\pi_i)$ as in Thm. \ref{lb6}. Then for any objects $\mc H_i,\mc H_j$ we have $\HomL_Q(\mc H_i,\mc H_j)=\Hom_{\mc B_Q}(\mc H_i,\mc H_j)$. Therefore, the $*$-functor 
\begin{gather*}
	\fk F:\RepL(Q)\rightarrow\Rep(\mc B_Q,\mc A)\\
	\left\{
\begin{array}{l}
(\mc H_i,\mu_i)\in\Obj(\RepL(Q))\mapsto (\mc H_i,\pi_i)\in\Obj(\Rep(\mc B_Q,\mc A)),\\[1ex]
T\in\HomL_Q(\mc H_i,\mc H_j)\mapsto  T\in\Hom_{\mc B_Q}(\mc H_i,\mc H_j)		
\end{array}	
	\right.
\end{gather*}
is an isomorphism of $C^*$-tensor categories.
\end{Mthm}

\begin{proof}
Choose any $T\in\Hom_{\mc A}(\mc H_i,\mc H_j)$. For each $X\in\mc B(\wtd I)$ written as $X=\mu L(\xi,\wtd I)|_{\mc H_a}$ where $\xi\in\mc H_a(I)$, we have (for any $\eta\in\mc H_i$)
\begin{align*}
T\pi_{i,\wtd I}(X)\eta=T\mu_i L(\xi,\wtd I)\eta	
\end{align*}
and
\begin{align*}
\pi_{j,\wtd I}(X)T\eta=\mu_i L(\xi,\wtd I)T\eta=\mu_i(\id_a\boxtimes T)L(\xi,\wtd I)\eta.
\end{align*}
So $T\in\HomL_Q(\mc H_i,\mc H_j)$ iff $T\in\Hom_{\mc B_Q}(\mc H_i,\mc H_j)$.
\end{proof}

\begin{rem}
The above proof actually shows that if $T$ intertwines $\mc A$ and $\mc B_Q(\wtd I)$ for some $\wtd I\in\Jtd$, then $T\in\HomL_Q(\mc H_i,\mc H_j)$. We conclude
\begin{align}
\HomL_Q(\mc H_i,\mc H_j)=\Hom_{\mc B_Q}(\mc H_i,\mc H_j)=\Hom_{\mc A}(\mc H_i,\mc H_j)\cap \Hom_{\mc B_Q(\wtd I)}(\mc H_i,\mc H_j).
\end{align}
Thus, if $\mc H_i,\mc H_j$ are both M\"obius covariant and dualizable as $\mc A$-modules, then for each $\wtd I$, as $\Hom_{\mc A}(\mc H_i,\mc H_j)=\Hom_{\mc A(I),\mc A(I')}(\mc H_i,\mc H_j)$ by \cite[Thm. 2.3]{GL96}, we see that $\Hom_{\mc B_Q}(\mc H_i,\mc H_j)$ is precisely the set of bounded operators intertwining the actions of $\mc B_Q(\wtd I)$ and $\mc A(I')$.
\end{rem}

We now study the question of when an extension arises from a $C^*$-Frobenius algebra. We first review some basic facts about finite index extensions of type III factors \cite{Lon89,Lon90,Kos98,BDH14}. Let $\mc N\subset\mc M$ be a pair of von Neumann algebras where $\mc N$ is a type III factor, and suppose that there is a (normal) faithful conditional expectation $\mc E:\mc M\rightarrow\mc N$. Then  $\mc E$ has finite index if and only if  there is $\lambda>0$ such that  for each $N>0$ and each $X_1,\dots,X_N\in\mc M$, we have the Pimsner-Popa inequality
\begin{align}
[X_k X_l^*]_{k,l}\leq \lambda [\mc E(X_k X_l^*)]_{k,l}	\label{eq13}
\end{align}
for the two elements of $\mc M\otimes\End(\Cbb^N)$. Note that if the finite index holds for one $\mc E$, then it holds for every faithful normal conditional expectation $\mc M\rightarrow\mc N$, cf. \cite[Prop. 5.4]{Lon89} and the paragraph thereafter. (See also Rem. \ref{lb15} for a related discussion.) In this case, we say $\mc N\subset\mc M$ has finite index. %To see this, note that $\mc M$ has finite dimensional center by  \cite[Prop. 7.20]{BDH14}. So by restricting to each minimal projection of the center of $\mc M$, we may assume that $\mc M$ is a factor. Then by \cite{Lon89} especially Prop. 5.4, $\mc N'\cap \mc M$ is finite dimensional, and the index of each $\mc E$ can be calculated by that of the restriction of $\mc E$ to each $p\mc M p\rightarrow\mc N p$ where $p$ is a minimal projection of $\mc N'\cap \mc M$. Thus the claim follows since the normal faithful conditional expectations from a factor to an irreducible subfactor are unique up to scalar multiplications \cite{Kos98}.

In the following theorem (which is the Connes-fusion version of (a variant of) \cite[Thm. 4.9]{LR95}), $\mc H_a$ is not assumed to be M\"obius covariant.

\begin{thm}\label{lb11}
Let $(\mc H_a,\mc B,\iota)$ be an extension of $\mc A$. Then the following are equivalent.
\begin{enumerate}[label=(\arabic*)]
\item $\mc H_a$ is a dualizable object in $\RepA$.
\item For each $I\in\mc J$, consider $\mc H_a$ as an $\mc A(I)-\mc A(I')^\opp$ bimodule. Then  $\mc H_a$ is dualizable in the $C^*$-tensor category of $\mc A(I)-\mc A(I')^\opp$ bimodules. Equivalently\footnote{See \cite{Lon90}, \cite[Sec. 2.7]{LR95}, or \cite[Sec. 7]{BDH14}}, the extension $\pi_{a,I}(\mc A(I))\subset \pi_{a,I}(\mc A(I'))'$ has finite index.
\item For each $\wtd I \in\Jtd$, $\pi_{a,I}(\mc A(I))\subset\mc B(\wtd I)$ is a finite index extension.
\item $\mc B=\mc B_Q$ for some $C^*$-Frobenius algebra $Q=(\mc H_a,\mu,\iota)$ in $\RepA$.
\end{enumerate} 	
Moreover, the $Q$ in (4) is unique.
\end{thm}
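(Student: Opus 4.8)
The plan is to prove the cycle $(4)\Rightarrow(1)\Leftrightarrow(2)\Rightarrow(3)\Rightarrow(4)$, so that all four conditions are equivalent, and then read off uniqueness of $Q$ from Remark \ref{lb27}. The implication $(4)\Rightarrow(1)$ is immediate: as recorded right after the definition of a $C^*$-Frobenius algebra, $\mc H_a$ is automatically dualizable with $\mc H_{\ovl a}=\mc H_a$ and $\ev_{a,\ovl a}=\iota^*\mu$. For $(1)\Leftrightarrow(2)$ the content is that dualizability in $\RepA$ is detected one interval at a time: restricting a dual pair $(\ev,\coev)$ for $\mc H_a$ in $\RepA$ to operators $\mc H_a(I)\to\mc H_a(I')$ via the $L$- and $R$-operators of $\scr E$ (using \eqref{eq5} and Lemma \ref{lb4} to recognize the fusion localized at $I$ as Connes fusion over $\mc A(I)$) exhibits $\mc H_a$ as a dualizable $\mc A(I)$-$\mc A(I')^{\opp}$ bimodule, and conversely bimodule (co)evaluations at one interval propagate to $\mc A$-module morphisms through $\scr E$; this follows from the construction of $\boxtimes$ via Connes fusion in \cite{Gui21a} together with the standard theory of dualizable objects (\cite{LR97,Yam04,BDH14}). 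The equivalence of bimodule-dualizability with finiteness of the index of $\pi_{a,I}(\mc A(I))\subseteq\pi_{a,I'}(\mc A(I'))'$ is the cited fact in the statement's footnote.

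For $(2)\Rightarrow(3)$: locality of $\mc A$ gives $\pi_{a,I}(\mc A(I))\subseteq\pi_{a,I'}(\mc A(I'))'=:\mc M$, and relative locality of $\mc B$ gives $\mc B(\wtd I)\subseteq\mc M$, so for each $\wtd I$ we have a tower $\pi_{a,I}(\mc A(I))\subseteq\mc B(\wtd I)\subseteq\mc M$ whose outer inclusion carries a finite-index conditional expectation $\mc E\colon\mc M\to\pi_{a,I}(\mc A(I))$ by $(2)$. Restricting $\mc E$ to $\mc B(\wtd I)$ yields a normal faithful conditional expectation $\mc B(\wtd I)\to\pi_{a,I}(\mc A(I))$ satisfying the Pimsner--Popa inequality \eqref{eq13} with the same constant (since $\mc B(\wtd I)\subseteq\mc M$), hence of finite index; this is $(3)$ (for every $\wtd I$, as the hypothesis holds for every $I$ because $\mc A$ is M\"obius).

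The heart of the argument is $(3)\Rightarrow(4)$. Fix $\wtd I$ and write $\mc N=\pi_{a,I}(\mc A(I))$, $\mc P=\mc B(\wtd I)$, a finite-index inclusion by $(3)$. First I would prove the lemma $\mc P\,\iota\Omega=\mc H_a(I)$. The inclusion $\mc P\,\iota\Omega\subseteq\mc H_a(I)$ holds for any extension: if $X\in\mc P$ then $X\iota$ intertwines $\mc A(I')$ (as $X$ commutes with $\pi_{a,I'}(\mc A(I'))$ and $\iota$ is $\mc A$-linear), so $X\iota\Omega\in\mc H_a(I)$; and in fact $X\iota=L(\xi,\wtd I)|_{\mc H_0}$ for $\xi=X\iota\Omega$, since both intertwine $\mc A(I')$ and agree on the cyclic vector $\Omega$. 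For the reverse inclusion one first checks $\mc H_a(I)=\mc M\iota\Omega$ (write $T=T\iota^*\iota$ for $T\in\Hom_{\mc A(I')}(\mc H_0,\mc H_a)$ and note $T\iota^*\in\mc M$) and then uses the finite index of $\mc N\subseteq\mc P$ together with the Reeh--Schlieder property to show every $\xi\in\mc M\iota\Omega$ is already of the form $X\iota\Omega$ with $X\in\mc P$. Since $\iota\Omega$ is separating for $\mc P$, this $X=:X_\xi$ is unique, and $\xi\mapsto X_\xi$ is a bijection from $\mc H_a(I)$ onto $\mc P$. Next I would \emph{define} $\mu$ on the spanning set $L(\mc H_a(I),\wtd I)\mc H_a=\mc H_a\boxtimes\mc H_a$ by $\mu\big(L(\xi,\wtd I)\eta\big)=X_\xi\eta$. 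Both well-definedness and boundedness follow from the single Pimsner--Popa estimate $[X_{\xi_k}^*X_{\xi_l}]_{k,l}\le\lambda\big[\pi_{a,I}\big(L(\xi_k,\wtd I)^*L(\xi_l,\wtd I)|_{\mc H_0}\big)\big]_{k,l}$, which is the dual of Step 1 of Part 2 of the proof of Thm.~\ref{lb6}: using $X_{\xi_l}\iota=L(\xi_l,\wtd I)|_{\mc H_0}$ and Lemma \ref{lb4} one computes that the $\omega_{\iota\Omega}$-preserving conditional expectation $\mc E\colon\mc P\to\mc N$ sends $X_{\xi_k}^*X_{\xi_l}$ exactly to $\pi_{a,I}\big(L(\xi_k,\wtd I)^*L(\xi_l,\wtd I)|_{\mc H_0}\big)$, whence the estimate from $(3)$. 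Independence of $\wtd I$ is checked as in Step 2 there; $\mu$ is $\mc A$-linear by \eqref{eq11}; $\iota$ is a unit for $\mu$ by the computation \eqref{eq12}; and associativity together with the Frobenius relation follow by transporting associativity of multiplication in $\mc P$ and the bimodule property of $\mc E$ through the fusion relations of Prop.~\ref{lb1} (the diagram chase \eqref{eq27}, now run backwards). By construction $\mu L(X\iota\Omega,\wtd I)|_{\mc H_a}=X$ for $X\in\mc P$, so \eqref{eq28} and the lemma give $\mc B_Q(\wtd I)=\{X_\xi:\xi\in\mc H_a(I)\}=\mc B(\wtd I)$, i.e.\ $\mc B=\mc B_Q$. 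Uniqueness of $Q$ is then Remark \ref{lb27}: any such $\mu$ must send $L(X\iota\Omega,\wtd I)\eta$ to $X\eta$.

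I expect the main obstacle to lie inside $(3)\Rightarrow(4)$, in two tightly linked points: the lemma $\mc P\,\iota\Omega=\mc H_a(I)$, and — more delicate — the existence of the \emph{right} conditional expectation $\mc E\colon\mc B(\wtd I)\to\pi_{a,I}(\mc A(I))$, namely the one preserving the vacuum state $\langle\,\cdot\,\iota\Omega|\iota\Omega\rangle$; equivalently, the statement that $\pi_{a,I}(\mc A(I))$ is globally invariant under the modular group of $(\mc B(\wtd I),\iota\Omega)$ (and that $J_{\mc B(\wtd I)}$ carries $\pi_{a,I}(\mc A(I))$ onto $\pi_{a,I'}(\mc A(I'))$, which is what identifies $\mc M$ with the basic construction and yields the lemma). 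Because $\mc B$ is not assumed M\"obius covariant, none of this can be read off from Bisognano--Wichmann, and it must instead be extracted from finiteness of the index together with the Reeh--Schlieder property; this is precisely the net-theoretic analogue of the classical construction of the canonical $Q$-system of a finite-index subfactor, and it is where the finiteness hypothesis is genuinely used.
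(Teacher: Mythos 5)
Your plan follows the paper's: the same cycle of implications, with all the real content in $(3)\Rightarrow(4)$, where you construct $\mu$ as the bounded extension of $L(X\iota\Omega,\wtd I)\eta\mapsto X\eta$ via a Pimsner--Popa estimate for the vacuum-preserving expectation, verify the unit, associativity and Frobenius properties by the fusion relations, and read off uniqueness from Remark \ref{lb27}. But there is a genuine gap, and you flag it yourself: you never establish the existence of the conditional expectation $\mc E_{\wtd I}:\mc B(\wtd I)\rightarrow\pi_{a,I}(\mc A(I))$ preserving $\bk{\,\cdot\;\iota\Omega|\iota\Omega}$, and you assert that it must be extracted from finiteness of the index via invariance of $\pi_{a,I}(\mc A(I))$ under the modular group of $(\mc B(\wtd I),\iota\Omega)$, resp.\ via the identification $J_{\mc B(\wtd I)}\pi_{a,I}(\mc A(I))J_{\mc B(\wtd I)}=\pi_{a,I'}(\mc A(I'))$ of $\pi_{a,I'}(\mc A(I'))'$ with the basic construction --- neither of which you prove. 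In the paper none of this is needed: the expectation is produced directly by compression with $\iota\iota^*$. For $X\in\mc B(\wtd I)$, relative locality and the fact that $\iota$ is an $\mc A$-morphism show that $\iota^* X\iota$ commutes with $\mc A(I')$ on $\mc H_0$, hence lies in $\mc A(I)$ by Haag duality for $\mc A$; setting $\mc E_{\wtd I}(X)=\pi_{a,I}(\iota^*X\iota)$ gives a normal conditional expectation with $\iota\iota^*X\iota\iota^*=\mc E_{\wtd I}(X)\iota\iota^*$, faithful because $\iota\Omega$ is separating for $\mc B(\wtd I)$. That this particular $\mc E_{\wtd I}$ satisfies Pimsner--Popa is then not a new modular-theoretic fact but the standard statement recalled before the theorem (via \cite[Prop. 7.20]{BDH14}, \cite{Lon89}, \cite{Kos98}) that finite index for one faithful normal expectation onto the type III factor $\pi_{a,I}(\mc A(I))$ implies it for all; with that, your estimate and the rest of your construction proceed exactly as in the paper.

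Relatedly, your preliminary lemma $\mc B(\wtd I)\iota\Omega=\mc H_a(I)$ is both unproved (its proposed derivation rests on the same unestablished basic-construction identification) and unnecessary: the paper only uses the inclusion $\mc B(\wtd I)\iota\Omega\subset\mc H_a(I)$ together with density of $\mc B(\wtd I)\iota\Omega$ in $\mc H_a$, which already makes the vectors $L(X\iota\Omega,\wtd I)\eta=R(\eta,\wtd I')X\iota\Omega$ total in $\mc H_a\boxtimes\mc H_a$, so that $\mu$ can be defined and bounded there; the equality you want is a consequence of $\mc B=\mc B_Q$ (via \eqref{eq32}) rather than an input to it. A smaller point: your claimed converse $(2)\Rightarrow(1)$ is dubious as stated, since $\RepA$ sits as a non-necessarily full subcategory of the $\mc A(I)$--$\mc A(I')^{\opp}$ bimodule category, so single-interval duality data need not consist of $\mc A$-module morphisms; fortunately this direction is redundant, since $(2)\Rightarrow(3)\Rightarrow(4)\Rightarrow(1)$ closes the cycle, and the remaining implications you give ($(1)\Rightarrow(2)$, $(2)\Rightarrow(3)$ by restricting the expectation to the intermediate algebra, $(4)\Rightarrow(1)$) agree with the paper.
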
	

We call any $\mc B$ satisfying one of the above equivalent conditions a \textbf{(non-local) finite index extension} of $\mc A$.

\begin{proof}
(1)$\Rightarrow$(2) since $\RepA$ is naturally a (non-necessarily full)\footnote{Note that $\RepfA$ is actually a full subcategory, due to \cite[Thm. 2.3]{GL96}. But we will not use this fact here.}  $C^*$-tensor subcategory of the category of $\mc A(I)-\mc A(I')^\opp$ bimodules. (2)$\Rightarrow$(3) is obvious. (4)$\Rightarrow$(1) since, as mentioned, any object with a $C^*$-Frobenius algebra structure is dualizable. So we only need to prove (3)$\Rightarrow$(4). Let us assume (3).

Step 1. Since $\mc A$ is irreducible, by scaling $\iota$, we may assume $\iota$ is an isometry. For each $\wtd I\in\Jtd$, by the Reeh-Schlieder property for $\mc B$,  $\mc B(\wtd I)\iota\Omega$ is dense in $\mc H_a$. Moreover, by the relative locality, for each $X\in\mc B(\wtd I)$,  $X\iota$ intertwines the actions of $\mc A(I')$. Hence $\mc B(\wtd I)\iota\Omega\subset\mc H_a(I)$.   Therefore, by the density of fusion product, all $L(X\iota\Omega)\eta=R(\eta)X\iota\Omega$ (where $X\in\mc B(\wtd I),\eta\in\mc H_a(I')$) form a dense subspace of $\mc H_a\boxtimes\mc H_a$. 

Note that $\iota\iota^*$ is the projection of $\mc H_a$ onto $\iota(\mc H_0)$. Thus it determines a faithful normal conditional expectation $\mc E_{\wtd I}:\mc B(\wtd I)\rightarrow\pi_{a,I}(\mc A(I))$ satisfying 
\begin{align*}
\iota\iota^* X\iota\iota^*=\mc E_{\wtd I}(X)\iota\iota^*	
\end{align*}
for each $X\in\mc B(\wtd I)$. Note that $X\iota=L(X\iota\Omega,\wtd I)$ since both sides intertwine $\mc A(I')$ and send $\Omega$ to $X\iota\Omega$. Thus, for each $N>0$ and $X_1,\dots,X_N\in\mc B(\wtd I)$, we have
\begin{align*}
\iota\iota^* X_k^*X_l\iota\iota^*=\iota L(X_k\iota\Omega,\wtd I)^* L(X_l\iota\Omega,\wtd I)\iota^*=\pi_{a,I}(L(X_k\iota\Omega,\wtd I)^* L(X_l\iota\Omega,\wtd I)|_{\mc H_0})\iota\iota^*.
\end{align*}
Thus
\begin{align}
\mc E_{\wtd I}(X_k^*X_l)=\pi_{a,I}(L(X_k\iota\Omega,\wtd I)^* L(X_l\iota\Omega,\wtd I)|_{\mc H_0})\xlongequal{\eqref{eq8}}L(X_k\iota\Omega,\wtd I)^* L(X_l\iota\Omega,\wtd I)|_{\mc H_a}.	
\end{align}
Choose $\lambda>0$ satisfying \eqref{eq13} for each $N$. Then
\begin{align}
\big[X_k^*X_l\big]_{k,l}\leq \lambda \big[L(X_k\iota\Omega,\wtd I)^* L(X_l\iota\Omega,\wtd I)\big|_{\mc H_a}\big]_{k,l}	
\end{align}
for the elements of $\mc B(\wtd I)\otimes\End(\Cbb^N)$. We conclude that there is a unique bounded linear operator 
\begin{align}
\mu_{\wtd I}:\mc H_a\boxtimes\mc H_a\rightarrow\mc H_a,\qquad 	L(X\iota\Omega,\wtd I)\eta\mapsto X\eta\label{eq15}
\end{align}
for each $X\in\mc B(\wtd I),\eta\in\mc H_a$. \\

Step 2. Similar to the proof of Thm. \ref{lb6}, $\mu_{\wtd I}$ is independent of $\wtd I$. So we write it as $\mu$. For each $x\in\mc A(I)$, $\mu xL(X\iota\Omega,\wtd I)\eta=\mu L(\pi_{a,I}(x) X\iota\Omega,\wtd I)\eta=\pi_{a,I}(x) X\eta=\pi_{a,I}(x)\mu L(X\iota\Omega,\wtd I)\eta$. Thus $\mu$ is an $\mc A$-module morphism. To check that $\mu$ satisfies the unit property, we choose any $\eta\in\mc H_a$ and compute
\begin{align*}
\mu(\iota\boxtimes\id_a)\eta\xlongequal{\eqref{eq5}}\mu(\iota\boxtimes\id_a)L(\Omega,\wtd I)\eta=\mu L(\id \iota\Omega,\wtd I)\eta=\id\eta=\eta,	
\end{align*}
and choose any $X\in\mc B(\wtd I)$ and compute
\begin{align*}
\mu (\id_a\boxtimes\iota)X\iota\Omega\xlongequal{\eqref{eq14}}\mu (\id_a\boxtimes\iota)L(X\iota\Omega,\wtd I)\Omega=\mu L(X\iota\Omega,\wtd I)\iota\Omega=X\iota\Omega.
\end{align*}

For each $X_1,X_2\in\mc B(\wtd I)$ and $\eta\in\mc H_a$, $X_1X_2\eta$ equals both
\begin{align*}
&\mu L(X_1X_2\iota\Omega,\wtd I)\eta=\mu L(\mu L(X_1\iota\Omega,\wtd I)X_2\iota\Omega,\wtd I)\eta\\
=&\mu(\mu\boxtimes\id_a) L(L(X_1\iota\Omega,\wtd I)X_2\iota\Omega,\wtd I)\eta=\mu(\mu\boxtimes\id_a) L(X_1\iota\Omega,\wtd I)L(X_2\iota\Omega,\wtd I)\eta	
\end{align*}
and
\begin{align*}
\mu L(X_1\iota\Omega,\wtd I)\mu L(X_2\iota\Omega,\wtd I)\eta	=\mu(\id_a\boxtimes\mu) L(X_1\iota\Omega,\wtd I)L(X_2\iota\Omega,\wtd I)\eta.
\end{align*}
This proves the associativity of $\mu$. The Frobenius relation can be proved in the same way as the last paragraph of the proof of Thm. \ref{lb6}. 

We have constructed a $C^*$-Frobenius algebra $Q=(\mc H_a,\mu,\iota)$. Then $\mc B_Q(\wtd I)$ consists of all $\mu L(\xi,\wtd I)|_{\mc H_a}$ where $\xi\in\mc H_a(I)$. Thus, it is generated by all $\mu L(X\iota\Omega,\wtd I)=X$ where $X\in\mc B(\wtd I)$. Therefore $\mc B=\mc B_Q$. Thus the map $Q\mapsto\mc B_Q$ is surjective. By Rem. \ref{lb27}, it is injective.
\end{proof}

\begin{rem}\label{lb14}
In this article, we construct extensions from $Q$ using $L$ operators. One can also use $R$ operators. Then one obtains a finite index extension $(\mc H_a,\mc B'_Q,\iota)$ where
\begin{align}
\mc B_Q'(\wtd I)=	\{\mu R(\eta,\wtd I)|_{\mc H_a}:\eta\in\mc H_a(I)\}.
\end{align}
(Note that $\mc B_Q'(\wtd I')=\mc B_Q(\wtd I)'$ by \eqref{eq20}.) Then similar results as Thm. \ref{lb6}, \ref{lb11}, and Main Thm.  \ref{lb12} hold for such extensions. For instance, the $C^*$-category of right $Q$-modules is canonically isomorphic to the $C^*$-category of $\mc B_Q'$-modules.
\end{rem}

\section{Isomorphisms of $C^*$-Frobenius algebras and extensions}

\begin{df}\label{lb8}
Let $Q^a=(\mc H_a,\mu^a,\iota^a)$ and $Q^b=(\mc H_b,\mu^b,\iota^b)$ be $C^*$-Frobenius algebras in $\RepA$. A \textbf{left isomorphism} $V:Q^a\rightarrow Q^b$ denotes  $V\in\Hom_{\mc A}(\mc H_a,\mc H_b)$ with bounded inverse $V^{-1}\in\Hom_{\mc A}(\mc H_b,\mc H_a)$ satisfying
\begin{gather}\label{eq18}
	\begin{gathered}
V\iota^a=\iota^b,\\
V\mu^a=\mu^b(V\boxtimes V),\\
V^*\mu^b(V\boxtimes \id_b)=\mu^a(\id_a\boxtimes V^*).
	\end{gathered}
\end{gather}
If $V$ is unitary, we say that $V$ is a \textbf{unitary isomorphism}.
\end{df}

\begin{rem}\label{lb9}
It is a routine check that $V^{-1}:Q^b\rightarrow Q^a$ is a left isomorphism, and that if $W:Q^b\rightarrow Q^c$ is a left isomorphism, then so is $WV:Q^a\rightarrow Q^c$. 
\end{rem}

\begin{rem}
Assuming the second equation of \eqref{eq18}, it is easy to see that the third one of \eqref{eq18} is equivalent to $V^*V\in\EndL_Q(\mc H_a)$, namely,
\begin{align}
V^*V\mu ^a=\mu^a(\id_a\boxtimes V^*V).	
\end{align}
Thus, our definition is in line with \cite[Def. 2.4]{NY18}.

From this observation, we see that if $\mc H_a,\mc H_b$  are irreducible $Q$-modules, then a left isomorphism $V:Q^a\rightarrow Q^b$ is unitary up  to scalar multiplication. It is unitary if we also have $(\iota^a)^*\iota^a=(\iota^b)^*\iota^b$. Thus, all left isomorphisms of irreducible normalized $C^*$-Frobenius algebras are unitary. 

More generally, by polar-decomposing $V$, we see that the  positive operators $H\in\EndL_Q(\mc H_a)$ with bounded inverse correspond surjectively to the unitary isomorphism classes of $C^*$-Frobenius algebras in $\Rep(\mc A)$ that are left isomorphic to $Q$. The correspondence is given by
\begin{align}
H\mapsto (\mc H_a,\mu^a(H^{-1}\boxtimes\id_a),H\iota^a).	
\end{align}
This relation is similar to that between the  faithful normal conditional expectations for a subfactor $\mc N\subset\mc M$ and the relative commutant $\mc N'\cap \mc M$ (cf. \cite{CD75} or \cite[Sec. A.3]{Kos98}). See Rem. \ref{lb15} for further discussions.
\end{rem}

\begin{rem}
It was shown in \cite[Thm. 2.9]{NY18} that any $C^*$-Frobenius algebra in a $C^*$-tensor category is left isomorphic to a standard special $C^*$-Frobenius algebra (i.e., a standard $Q$-system). See \cite{NY18} for the meanings of these names.
\end{rem}

\begin{df}
Let $(\mc H_a,\mc B^a,\iota^a)$ and $(\mc H_b,\mc B^b,\iota^b)$ be extensions of $\mc A$. An \textbf{isomorphism} $\varphi:\mc B^a\rightarrow\mc B^b$ (with respect to $\mc A$) is a collection of (normal) isomorphisms of von Neumann algebras 
\begin{align*}
\varphi_{\wtd I}:\mc B^a(\wtd I)\xrightarrow{\simeq}\mc B^b(\wtd I)	
\end{align*}
(for all $\wtd I\in\Jtd$) satisfying the compatibility and the extension property as in Def. \ref{lb7} (namely, $(\mc H_b,\varphi)$ is a $(\mc B^a,\mc A)$-module, or equivalently, $(\mc H_a,\varphi^{-1})$ is a $(\mc B^b,\mc A)$-module).

Such $\varphi$ is called a \textbf{unitary isomorphism} if there is a (necessarily unique) unitary operator $V:\mc H_a\rightarrow\mc H_b$ satisfying
\begin{gather}
V\iota^a=\iota^b,\\
V X=\varphi_{\wtd I}(X)V
\end{gather}
for all $\wtd I\in\Jtd,X\in\mc B^a(\wtd I)$.
\end{df}

\begin{proof}
The uniqueness of $V$ follows from the fact that $VX\iota^a\Omega=\varphi_{\wtd I}(X)\iota^b\Omega$.
\end{proof}

The composition of two isomorphisms is clearly a morphism of extensions. Thus, the extensions of $\mc A$ form a category whose objects are the isomorphisms.

\begin{lm}\label{lb10}
Let $Q^a,Q^b$ be as in Def. \ref{lb8}, and choose $V\in\Hom_{\mc A}(\mc H_a,\mc H_b)$ with bounded inverse satisfying $V\iota^a=\iota^b$. Set
\begin{align}
\mu^V=\mu^b(V\boxtimes\id_b)\quad \in\Hom_{\mc A}(\mc H_a\boxtimes\mc H_b,\mc H_b).
\end{align}
Then $V:Q^a\rightarrow Q^b$ is a left isomorphism if and only if $(\mc H_b,\mu^V)$ is a left $Q^a$-module.
\end{lm}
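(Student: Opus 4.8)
The plan is to exploit that the hypotheses already supply the first of the three defining relations \eqref{eq18} of a left isomorphism ($V\iota^a=\iota^b$) together with the invertibility of $V$, so that the whole statement reduces to an equivalence between the remaining two relations --- $V\mu^a=\mu^b(V\boxtimes V)$ and $V^*\mu^V=\mu^a(\id_a\boxtimes V^*)$ --- and the two axioms that make $(\mc H_b,\mu^V)$ a left $Q^a$-module. First I would observe that the unit axiom of $\mu^V$ is automatic: $\mu^V(\iota^a\boxtimes\id_b)=\mu^b((V\iota^a)\boxtimes\id_b)=\mu^b(\iota^b\boxtimes\id_b)=\id_b$. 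Hence ``$(\mc H_b,\mu^V)$ is a left $Q^a$-module'' means exactly that the square \eqref{eq3} (with $\mu=\mu^a$, $\mu_i=\mu^V$, $\mc H_i=\mc H_b$) commutes adjointly, i.e.\ that its commuting part $\mu^V(\mu^a\boxtimes\id_b)=\mu^V(\id_a\boxtimes\mu^V)$ and its adjoint-commuting part $(\id_a\boxtimes\mu^V)((\mu^a)^*\boxtimes\id_b)=(\mu^V)^*\mu^V$ both hold. The strategy is then: (i) show the commuting part is equivalent to $V\mu^a=\mu^b(V\boxtimes V)$; (ii) show that, granting $V\iota^a=\iota^b$ and $V\mu^a=\mu^b(V\boxtimes V)$, the adjoint-commuting part is equivalent to $V^*\mu^V=\mu^a(\id_a\boxtimes V^*)$.

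For (i): if $V\mu^a=\mu^b(V\boxtimes V)$, then by the interchange law $\mu^V=\mu^b(V\boxtimes\id_b)=\mu^b(V\boxtimes V)(\id_a\boxtimes V^{-1})=V\mu^a(\id_a\boxtimes V^{-1})$, so the commuting part for $\mu^V$ is simply the transport along the invertible intertwiner $V$ of the associativity of $\mu^a$ (equivalently, one expands directly and uses associativity of $\mu^b$). Conversely I would precompose $\mu^V(\mu^a\boxtimes\id_b)=\mu^V(\id_a\boxtimes\mu^V)$ with $\id_a\boxtimes\id_a\boxtimes\iota^b$; since $\mu^V(\id_a\boxtimes\iota^b)=\mu^b(V\boxtimes\iota^b)=V$ by the unit of $Q^b$, the left-hand side collapses to $V\mu^a$ and the right-hand side to $\mu^V(\id_a\boxtimes V)=\mu^b(V\boxtimes V)$, giving the relation.

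For (ii): I would again use $\mu^V=V\mu^a(\id_a\boxtimes V^{-1})$. Substituting this into both sides of the adjoint-commuting equation and invoking the Frobenius relation of $Q^a$ itself, $(\id_a\boxtimes\mu^a)((\mu^a)^*\boxtimes\id_a)=(\mu^a)^*\mu^a$, both sides take the form $(\mu^a)^*\mu^a$ flanked by invertible operators of the shape $\id_a\boxtimes V^{\pm1}$ (and their adjoints); cancelling these reduces the adjoint-commuting equation to $(\id_a\boxtimes V^*V)(\mu^a)^*\mu^a=(\mu^a)^*(V^*V)\mu^a$. On the other side, by the Remark following Definition~\ref{lb8} the third relation of \eqref{eq18} is --- given the second --- equivalent to $V^*V\mu^a=\mu^a(\id_a\boxtimes V^*V)$; its adjoint post-composed with $\mu^a$ yields precisely the reduced equation, and conversely the reduced equation pre-composed with $\id_a\boxtimes\iota^a$ (using $\mu^a(\id_a\boxtimes\iota^a)=\id_a$) returns $(\id_a\boxtimes V^*V)(\mu^a)^*=(\mu^a)^*(V^*V)$, whose adjoint is $V^*V\mu^a=\mu^a(\id_a\boxtimes V^*V)$. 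Combining (i) and (ii) proves the lemma.

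No step here is deep; the proof is graphical bookkeeping, and the only real obstacle is carrying the unitors, the interchange law, and the various algebra axioms of $Q^a$ and $Q^b$ through correctly --- in particular noticing that only the unit and associativity of $Q^b$ are needed whereas all three axioms of $Q^a$ enter. The two devices that keep it clean are the identity $\mu^V=V\mu^a(\id_a\boxtimes V^{-1})$ (valid once the second relation of \eqref{eq18} holds, exhibiting $(\mc H_b,\mu^V)$ as the transport of the regular $Q^a$-module along $V$) and the trick of pre/post-composing an identity of morphisms on $\mc H_a\boxtimes\mc H_a$ with the units $\iota^a,\iota^b$ to move between ``global'' identities and ones with one tensor slot killed. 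For the ``module $\Rightarrow$ left isomorphism'' direction there is a shortcut for (ii): once $V\mu^a=\mu^b(V\boxtimes V)$ is in hand, this says exactly $V\in\HomL_{Q^a}(\mc H_a,\mc H_b)$, with $\mc H_a$ carrying its regular $Q^a$-structure and $\mc H_b$ carrying $\mu^V$; hence $V^*\in\HomL_{Q^a}(\mc H_b,\mc H_a)$ since $\HomL_{Q^a}$ is $*$-closed, i.e.\ $V^*\mu^V=\mu^a(\id_a\boxtimes V^*)$, which is the third relation of \eqref{eq18}.
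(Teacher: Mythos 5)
Your proposal is correct and takes essentially the same route as the paper: the unit axiom is automatic, the associativity of $\mu^V$ is shown equivalent to $V\mu^a=\mu^b(V\boxtimes V)$ by the same trick of slotting in $\iota^b$ and using the unit/associativity of $Q^b$, and the Frobenius axiom is reduced to the third relation of \eqref{eq18} via the identity $\mu^V=V\mu^a(\id_a\boxtimes V^{-1})$ together with the Frobenius relation of $Q^a$. The only cosmetic differences are that the paper rewrites the Frobenius condition through $\ev_a=(\iota^a)^*\mu^a$ while you manipulate the adjoint-commuting square directly and pass through the equivalent condition $V^*V\in\EndL_{Q^a}(\mc H_a)$ (and your $*$-closedness shortcut for one direction), which are minor variations of the same bookkeeping.
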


\begin{proof}
The unit property of $(\mc H_b,\mu^V)$ is automatic. The associativity 
\begin{align*}
\mu^V(\id_a\boxtimes\mu^V)=\mu^V(\mu^a\boxtimes\id_b)	
\end{align*}
of $(\mc H_b,\mu^V)$ means
\begin{align*}
\mu^b(\id_b\boxtimes\mu^b)(V\boxtimes V\boxtimes\id_b)=	\mu^b(V\mu^a\boxtimes\id_b).
\end{align*}
By the associativity of $\mu^b$, this is equivalent to
\begin{align*}
	\mu^b(\mu^b(V\boxtimes V)\boxtimes\id_b)=	\mu^b(V\mu^a\boxtimes\id_b).
\end{align*}
By applying $\id_a\boxtimes\id_a\boxtimes\iota^b$ to this relation and using the unit property of $\mu^b$, we see that the above relation (and hence the associativity of $\mu^V$) is equivalent to $V\mu^a=\mu^b(V\boxtimes V)$.

Set $\ev_a=(\iota^a)^*\mu^a\in\Hom_{\mc A}(\mc H_a\boxtimes\mc H_a,\mc H_0)$. Then the Frobenius relation of $\mu^V$ is equivalent to
\begin{align*}
\mu^V=(\ev_a\boxtimes\id_b)(\id_a\boxtimes(\mu^V)^*).
\end{align*}
Since $\mu^V=\mu^b(V\boxtimes\id_b)=V\mu^a(\id_a\boxtimes V^{-1})$, the above relation means
\begin{align*}
\mu^b(V\boxtimes\id_b)=(V^{-1})^*(\ev_a\boxtimes\id_a)(\id_a\boxtimes(\mu^a)^*)(\id_a\boxtimes V^*).
\end{align*}
By the Frobenius relation of $\mu^a$, we have $(\ev_a\boxtimes\id_a)(\id_a\boxtimes(\mu^a)^*)=\mu^a$. So the above relation is equivalent to
\begin{align*}
\mu^b(V\boxtimes\id_b)=(V^{-1})^*\mu^a(\id_a\boxtimes V^*).
\end{align*}
This is equivalent to the third of \eqref{eq18}.
\end{proof}

In the following theorem, we let $Q^a=(\mc H_a,\mu^a,\iota^a)$, $Q^b=(\mc H_b,\mu^b,\iota^b)$, etc. be $C^*$-Frobenius algebras in $\RepA$.

\begin{thm}\label{lb13}
The following are true.
\begin{enumerate}[label=(\alph*)]
\item If $V:Q^a\rightarrow Q^b$ is a left isomorphism of $C^*$-Frobenius algebras, then there is a unique isomorphism $\varphi^V:\mc B_{Q^a}\rightarrow\mc B_{Q^b}$ (with respect to $\mc A$) satisfying
\begin{align}
VX=\varphi_{\wtd I}^V(X)V	\label{eq19}
\end{align}
for all $\wtd I\in\Jtd,X\in\mc B_{Q^a}(\wtd I)$. Moreover, $V$ is unitary if and only if $\varphi^V$ is so. 
\item If $W:Q^b\rightarrow Q^c$ is also a left isomorphism, then $\varphi^{WV}=\varphi^W\circ\varphi^V$.
\item If $(\mc H_b,\mc B^b,\iota^b)$ is an extension of $\mc A$, and if $\varphi:\mc B_{Q^a}\rightarrow\mc B^b$ is an isomorphism (with respect to $\mc A$), then $\mc B^b=\mc B_{Q^b}$ for a unique $C^*$-Frobenius algebra $Q^b$, and $\varphi=\varphi^V$ for a unique left isomorphism $V:Q^a\rightarrow Q^b$.
\end{enumerate}
\end{thm}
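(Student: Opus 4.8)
The plan is to obtain part~(a) by feeding Lemma~\ref{lb10} into Theorem~\ref{lb6}, part~(b) from the multiplicativity of conjugation, and part~(c) by combining Theorem~\ref{lb11} with the converse half of Theorem~\ref{lb6}, extracting the intertwiner from the $L$-operators.

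For part~(a): given a left isomorphism $V\colon Q^a\to Q^b$, Lemma~\ref{lb10} says that $(\mc H_b,\mu^V)$ with $\mu^V=\mu^b(V\boxtimes\id_b)$ is a left $Q^a$-module, so Theorem~\ref{lb6} equips the $\mc A$-module $\mc H_b$ with a $(\mc B_{Q^a},\mc A)$-module structure $\pi^V$ characterized by $\pi^V_{\wtd I}(\mu^a L(\xi,\wtd I)|_{\mc H_a})=\mu^V L(\xi,\wtd I)|_{\mc H_b}$ for $\xi\in\mc H_a(I)$. I would put $\varphi^V:=\pi^V$; then for $X=\mu^a L(\xi,\wtd I)|_{\mc H_a}$ a short computation — push $V^{-1}$ through $L(\xi,\wtd I)$ by the functoriality~\eqref{eq2}, apply $V\mu^a=\mu^b(V\boxtimes V)$, pull $V$ back in by~\eqref{eq2} — gives $VXV^{-1}=\mu^b L(V\xi,\wtd I)|_{\mc H_b}=\pi^V_{\wtd I}(X)$, which is both~\eqref{eq19} and, since $V\xi\in\mc H_b(I)$, a proof that $\varphi^V_{\wtd I}$ maps $\mc B_{Q^a}(\wtd I)$ \emph{onto} $\mc B_{Q^b}(\wtd I)$; being a $(\mc B_{Q^a},\mc A)$-module structure, $\varphi^V$ automatically consists of normal $*$-isomorphisms satisfying compatibility and the extension property (the $*$-preservation is where the third relation of~\eqref{eq18} enters, via Lemma~\ref{lb10}). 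Uniqueness of $\varphi^V$ subject to~\eqref{eq19} is forced by invertibility of $V$. Finally, $V$ unitary trivially makes $\varphi^V$ unitary; conversely, if $\varphi^V$ is unitary with implementing unitary $W$, then $W$ and $V$ both satisfy~\eqref{eq19} and $W\iota^a=V\iota^a=\iota^b$, so both send $X\iota^a\Omega$ to $\varphi^V_{\wtd I}(X)\iota^b\Omega$; as $\mc B_{Q^a}(\wtd I)\iota^a\Omega=\mc H_a(I)$ is dense, $V=W$ is unitary. Part~(b) is then immediate: $\varphi^{WV}_{\wtd I}(X)=(WV)X(WV)^{-1}=W(VXV^{-1})W^{-1}=\varphi^W_{\wtd I}(\varphi^V_{\wtd I}(X))$, with $WV$ a left isomorphism by Remark~\ref{lb9}.

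For part~(c): I would first check that $\mc B^b$ is a finite index extension, so Theorem~\ref{lb11} applies. The isomorphism $\varphi_{\wtd I}\colon\mc B_{Q^a}(\wtd I)\to\mc B^b(\wtd I)$ is a normal $*$-isomorphism carrying $\pi_{a,I}(\mc A(I))$ onto $\pi_{b,I}(\mc A(I))$ (extension property of $\varphi$), so the inclusion $\pi_{b,I}(\mc A(I))\subset\mc B^b(\wtd I)$ is isomorphic to $\pi_{a,I}(\mc A(I))\subset\mc B_{Q^a}(\wtd I)$, which has finite index because $\mc H_a$ is dualizable, and the Pimsner--Popa inequality~\eqref{eq13} transports along the isomorphism; hence $\mc B^b=\mc B_{Q^b}$ for a unique $C^*$-Frobenius algebra $Q^b=(\mc H_b,\mu^b,\iota^b)$. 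Next, $(\mc H_b,\varphi)$ is a $(\mc B_{Q^a},\mc A)$-module by the definition of an isomorphism of extensions, so the converse half of Theorem~\ref{lb6} produces a left $Q^a$-module $(\mc H_b,\mu_0)$ with $\varphi_{\wtd I}(\mu^a L(\xi,\wtd I)|_{\mc H_a})=\mu_0 L(\xi,\wtd I)|_{\mc H_b}$. Define $V:=\mu_0(\id_a\boxtimes\iota^b)$ composed with the unitor $\mc H_a\cong\mc H_a\boxtimes\mc H_0$; it is manifestly a bounded element of $\Hom_{\mc A}(\mc H_a,\mc H_b)$, it acts on the dense subspace $\mc H_a(I)$ by $\xi\mapsto\mu_0 L(\xi,\wtd I)|_{\mc H_b}\iota^b\Omega$ (functoriality and the state--field relation~\eqref{eq14}), and $V\iota^a=\iota^b$ by the unit axiom for $\mu_0$ together with~\eqref{eq5}. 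Writing $\varphi_{\wtd I}(\mu^a L(\xi,\wtd I)|_{\mc H_a})\in\mc B_{Q^b}(\wtd I)$ in the canonical form of Remark~\ref{lb27} for $Q^b$, its generating vector is $V\xi$, so $\varphi_{\wtd I}(\mu^a L(\xi,\wtd I)|_{\mc H_a})=\mu^b L(V\xi,\wtd I)|_{\mc H_b}=\mu^b(V\boxtimes\id_b)L(\xi,\wtd I)|_{\mc H_b}$; comparing with $\mu_0 L(\xi,\wtd I)|_{\mc H_b}$ and using the density of fusion products yields $\mu_0=\mu^b(V\boxtimes\id_b)=\mu^V$. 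Running the same construction for $\varphi^{-1}$ gives $V'\colon\mc H_b\to\mc H_a$ with $V'V=\id_a$ and $VV'=\id_b$ (compose the two identifications and use Remark~\ref{lb27} once more), so $V$ is invertible; Lemma~\ref{lb10} then makes $V$ a left isomorphism $Q^a\to Q^b$, part~(a) gives $\varphi=\varphi^V$ (both equal $\mu^b L(V\xi,\wtd I)|_{\mc H_b}$ on $\mu^a L(\xi,\wtd I)|_{\mc H_a}$), uniqueness of $V$ follows exactly as in the unitary argument of~(a), and uniqueness of $Q^b$ is part of Theorem~\ref{lb11}.

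I expect the main obstacle to be part~(c): manufacturing a bounded, boundedly invertible intertwiner $V$ out of the purely von Neumann algebraic datum $\varphi$. The obvious candidate $X\iota^a\Omega\mapsto\varphi_{\wtd I}(X)\iota^b\Omega$ is not visibly bounded, since $\varphi_{\wtd I}$ need not preserve the vacuum states on $\mc B_{Q^a}(\wtd I)$ and $\mc B^b(\wtd I)$; the remedy is to route through Theorem~\ref{lb6}, which first turns $\varphi$ into the honest morphism $\mu_0$, out of which $V$ is built by composition, so that boundedness is free and only invertibility (handled via $\varphi^{-1}$) needs an argument. A secondary annoyance is the coherence/unitor bookkeeping around $V=\mu_0(\id_a\boxtimes\iota^b)$ and the identity $V\iota^a=\iota^b$; working consistently through the $L$-operators rather than manipulating unitors directly keeps it under control.
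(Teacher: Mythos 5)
Your proof is correct, and parts (a) and (b) coincide with the paper's argument (same use of Lemma \ref{lb10} and Theorem \ref{lb6}, same computation giving \eqref{eq19}, same density argument $\mc B_{Q^a}(\wtd I)\iota^a\Omega=\mc H_a(I)$ for the unitarity statement). Part (c), however, takes a genuinely different route at the two delicate points. The paper does \emph{not} invoke Theorem \ref{lb11} for $\mc B^b$ at the outset: it constructs $\wht\mu$ and $V=\wht\mu(\id_a\boxtimes\iota^b)$ exactly as you do, but then proves that $V$ has bounded inverse directly, using that $V^*V\in\End_{\mc A}(\mc H_a)$ lies in a finite-dimensional algebra (dualizability of $\mc H_a$), so it suffices to exclude a kernel, which is done via Reeh--Schlieder for $\mc B^b$, faithfulness of $\varphi_{\wtd I}$, and Remark \ref{lb27}; only afterwards is $\mu^b:=\wht\mu(V^{-1}\boxtimes\id_b)$ shown to be a $C^*$-Frobenius structure by repeating Step 2 of the proof of Theorem \ref{lb11}. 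You instead obtain $Q^b$ first, by transporting the Pimsner--Popa inequality \eqref{eq13} along the normal isomorphisms $\varphi_{\wtd I}$ (which carry $\pi_{a,I}(\mc A(I))$ onto $\pi_{b,I}(\mc A(I))$ by the extension property) and citing Theorem \ref{lb11}(3)$\Rightarrow$(4); with $Q^b$ in hand, invertibility of $V$ comes almost for free by symmetry, since the same construction applied to $\varphi^{-1}$ yields $V'$ with $V'V=\id_a$ and $VV'=\id_b$ on the dense subspaces $\mc H_a(I)$, $\mc H_b(I)$ via Remark \ref{lb27}. Both steps check out (no circularity: Theorem \ref{lb11} precedes Theorem \ref{lb13}), and your identification $\mu_0=\mu^b(V\boxtimes\id_b)$ and the uniqueness arguments match the paper's. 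The trade-off is worth noting: as the author points out in Remark \ref{lb15}, the paper's proof of Theorem \ref{lb13} deliberately avoids the subfactor fact that finite index for one faithful normal conditional expectation implies it for all, whereas your route reintroduces dependence on it through Theorem \ref{lb11} (whose proof uses the expectation determined by $\iota^b\iota^{b*}$); in exchange you dispense with the spectral/finite-dimensionality argument for $V^*V$ and get invertibility by a cleaner two-sided construction.
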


\begin{proof}
(a) The uniqueness of $\varphi^V$ follows from the surjectivity of $V$. As for the existence, by Thm. \ref{lb6} (applied to $\mu^V$) and Lemma \ref{lb10}, we have a $(\mc B_{Q^a},\mc A)$-module $(\mc H_b,\varphi^V)$ such that for all $\wtd I\in\Jtd$ and $\xi\in\mc H_a(I)$,
\begin{align}
\varphi^V_{\wtd I}\big(\mu^aL(\xi,\wtd I)|_{\mc H_a}\big)=\mu^b L(V\xi,\wtd I)|_{\mc H_b}.	
\end{align}
Clearly $\varphi^V_{\wtd I}(\mc B_{Q^a}(\wtd I))=\mc B_{Q^b}(\wtd I)$. Since we have a similar homomorphism $\varphi_{\wtd I}^{V^{-1}}$ sending each $\mu^b L(V\xi,\wtd I)|_{\mc H_b}$ to $\mu^aL(\xi,\wtd I)|_{\mc H_a}$, $\varphi^V_{\wtd I}$ must be an isomorphism of von Neumann algebras. Thus, $\varphi^V$ is an isomorphism from $\mc B_{Q^a}$ to $\mc B_{Q^b}$. \eqref{eq19} holds since
\begin{align*}
V\mu^a L(\xi,\wtd I)|_{\mc H_a}=\mu^b(V\boxtimes V)L(\xi,\wtd I)|_{\mc H_a}=\mu^b L(V\xi,\wtd I)|_{\mc H_b}\cdot V.	
\end{align*}

If $V$ is unitary, then $\varphi^V$ is clearly so. Conversely, if $\varphi^V$ is unitary, then there is a unitary  $W:\mc H_a\rightarrow\mc H_b$ satisfying $W\iota^a=\iota^b$ and $W(\cdot )W^{-1}=\varphi_{\wtd I}$. Therefore, for each $X\in\mc B_{Q^a}(\wtd I)$, both $W$ and $V$ send $X\iota^a\Omega$ to $\varphi_{\wtd I}(X)\iota^b\Omega$. Therefore $V=W$, and hence $V$ is unitary. 

(b) is obvious.

(c) Let $\varphi:\mc B_{Q^a}\rightarrow\mc B^b$ be an isomorphism. Since $(\mc H_b,\varphi)$ is a $(\mc B_{Q^a},\mc A)$-module, by Thm. \ref{lb6}, we have a left $Q^a$-module $(\mc H_b,\wht\mu)$ (where $\wht\mu\in\Hom_{\mc A}(\mc H_a\boxtimes\mc H_b,\mc H_b)$) such that for each $\wtd I\in\Jtd,\xi\in\mc H_a(I)$,
\begin{align*}
\varphi_{\wtd I}\big(\mu^aL(\xi,\wtd I)|_{\mc H_a}\big)=\wht\mu L(\xi,\wtd I)|_{\mc H_b}.	
\end{align*}
In particular, each $Y\in\mc B^b(\wtd I)$ is of the form $Y=\wht\mu L(\xi,\wtd I)|_{\mc H_b}$. Set
\begin{align}
V=\wht\mu(\id_a\boxtimes\iota^b)\quad\in\Hom_{\mc A}(\mc H_a,\mc H_b).	\label{eq21}
\end{align}
Then
\begin{align*}
Y\iota^b\Omega=	\wht\mu L(\xi,\wtd I)\iota^b\Omega=\wht\mu(\id_a\boxtimes\iota^b)L(\xi,\wtd I)\Omega=V\xi.
\end{align*}
Thus, $V$ has dense range since $\mc B^b(\wtd I)\iota^b\Omega$ is dense.

Let us show that $V$ has bounded inverse. Note that $V$ and $V^*V$ have the same null space. Also, since $V^*V\in\End_{\mc A}(\mc H_a)$ where $\End_{\mc A}(\mc H_a)$ is finite-dimensional (as $\mc H_a$ is dualizable), the spectrum of $V^*V$ must be a finite set. Thus, if we can show that $V^*V$ has trivial null space, then $V^*V$ and hence $(V^*V)^{\frac 12}$ must have bounded inverse. Then by polar decomposition, $V$ would have bounded inverse.

Suppose the null space of $V^*V$ is non-trivial. Since it is an $\mc A$-submodule of $\mc H_a$,  it must contain a non-zero element $\xi\in\mc H_a(I)$. Thus, as $V\xi=0$, we have $Y\iota^b\Omega=0$ where $Y=\wht\mu L(\xi,\wtd I)|_{\mc H_b}$. By the Reeh-Schlieder property for $\mc B^b$, we have $Y=0$. So $\varphi_{\wtd I}\big(\mu^aL(\xi,\wtd I)|_{\mc H_a}\big)=0$. As $\varphi_{\wtd I}$ is faithful, $\mu^a L(\xi,\wtd I)|_{\mc H_a}=0$. So $\xi=0$ by Rem. \ref{lb27}. This is a contradiction. Thus, we have finished proving that $V$ has bounded inverse.

Now, for each $\eta\in\mc H_b$ and $Y=\wht\mu L(\xi,\wtd I)|_{\mc H_b}$,
\begin{align*}
L(Y\iota^b\Omega,\wtd I)\eta=L(V\xi,\wtd I)\eta=(V\boxtimes\id_b)L(\xi,\wtd I)\eta,
\end{align*}
and
\begin{align*}
Y\eta=\wht\mu L(\xi,\wtd I)\eta.	
\end{align*}
Thus we have a bounded map, namely $\wht\mu(V^{-1}\boxtimes\id_b)$, from $\mc H_b\boxtimes\mc H_b$ to $\mc H_b$ sending
\begin{align*}
L(Y\iota^b\Omega,\wtd I)\eta\mapsto Y\eta
\end{align*}
for each $\wtd I\in\Jtd$ and $Y\in\mc B^b(\wtd I)$. Thus, as argued in Step 2 of the proof of Thm. \ref{lb11}, $\mu^b:=\wht\mu(V^{-1}\boxtimes\id_a)$ defines a $C^*$-Frobenius algebra structure $Q^b=(\mc H_a,\mu^b,\iota^b)$, and $\mc B^b=\mc B_{Q^b}$. By \eqref{eq21} and the unit property of $\wht\mu$, we have $V\iota^a=\iota^b$. As $\wht\mu=\mu^b(V\boxtimes\id_a)$ equals the $\mu^V$ in Lemma \ref{lb10}, by that lemma, $V$ is a left isomorphism from $Q^a$ to $Q^b$. 

Finally, the uniqueness of isomorphisms $V:Q^a\rightarrow Q^b$ satisfying $\varphi=\varphi^V$ (namely, satisfying $VX=\varphi_{\wtd I}(X)V$ for all $X\in\mc B_{Q^a}(\wtd I)$) is due to
\begin{align}
VX\iota^a\Omega=\varphi_{\wtd I}(X)\iota^b\Omega.
\end{align}
\end{proof}

\begin{rem}
In Def. \ref{lb8}, if we replace the third relation of \eqref{eq18} by
\begin{align}
V^*\mu^b(\id_b\boxtimes V)=\mu^a(V^*\boxtimes\id_a)	
\end{align}
(this is equivalent to $V^*V\mu_a=\mu_a(V^*V\boxtimes\id_a)$), then such $V:Q^a\rightarrow Q^b$ is called a \textbf{right isomorphism}. In Thm. \ref{lb13}, if we replace $\mc B_{Q^a},\mc B_{Q^a}$  by $\mc B'_{Q^a},\mc B'_{Q^b}$   (cf. Rem. \ref{lb14}) and replace the left isomorphisms by the right isomorphisms, then the theorem still holds.
\end{rem}

Similar to Main Thm. \ref{lb12}, we formulate (part of) Thm. \ref{lb13} in terms of isomorphism of categories. Let $\mathrm{Frob}^{\mathrm L}(\mc A)$ be the category of $C^*$-Frobenius algebras in $\RepA$ whose morphisms are the left isomorphisms. Let $\mathrm{Ext}^{\mathrm d}(\mc A)$ be the category of finite index extensions of $\mc A$ whose morphisms are the isomorphisms (with respect to $\mc A$).

\begin{Mthm}\label{lb26}
The functor $\fk G:\mathrm{Frob}^{\mathrm L}(\mc A)\rightarrow\mathrm{Ext}^{\mathrm d}(\mc A)$ sending each $Q$ to $\mc B_Q$ and each left isomorphism $V$ to $\varphi^V$ is an isomorphism of categories.
\end{Mthm}

\begin{rem}\label{lb15}
Let us translate the results of these two sections to the language of subfactors and von Neumann bimodules. We fix a von Neumann factor $\mc N$ together with a faithful normal state $\omega$. We call $\mc H_0:=L^2(\mc N,\omega)$ the vacuum $\mc N-\mc N$ bimodule. The element $1$ in $L^2(\mc N,\omega)$ is denoted by $\Omega$. An \textbf{abstract extension} of $\mc N$ is an inclusion $\mc N\subset\mc M$ (where $\mc M$ is a von Neumann algebra) such that there exists a faithful normal conditional expectation $\mc E:\mc M\rightarrow\mc N$. (We only assume the existence of $\mc E$ but do not include it as part of the data of an abstract extension.) A \textbf{concrete extension} of $\mc N$ is $(\mc H_a,\mc N\subset\mc M,\iota)$, where $\mc M$ is a von Neumann algebra containing $\mc N$, $\mc H_a$ is an $\mc M-\mc N$ bimodule, $\iota:\mc H_0\rightarrow\mc H_a$ is an isometric homomorphism of $\mc N-\mc N$ bimodules, and $\iota\Omega$ is cyclic and separating under the left action of $\mc M$.

A concrete extension $(\mc H_a,\mc N\subset\mc M,\iota)$ determines a faithful normal conditional expectation $\mc E:\mc M\rightarrow\pi_a(\mc N)$ (where $\pi_a$ is the left representation of $\mc N$ on $\mc H_a$) satisfying
\begin{align*}
\iota\iota^* X\iota\iota^*=\mc E(X)\iota\iota^*.	
\end{align*}
We say this extension has finite index if $\mc E$ has finite index (in the sense of Pimsner-Popa condition). Conversely, any abstract extension $\mc N\subset \mc M$ with a chosen faithful normal conditional expectation $\mc E$ determines a concrete extension $(L^2(\mc M,\omega\circ\mc E),\mc N\subset\mc M,\iota)$ where $\iota$ is the canonical embedding $L^2(\mc N,\omega)\rightarrow L^2(\mc M,\omega\circ\mc E)$. Thus, concrete extensions are roughly the same as abstract extensions with chosen conditional expectations. 

A (resp. finite index) concrete extension of $\mc N$ is analogous to a (resp. finite index) extension of conformal net $\mc A$.  On each side, a finite index extensions is described uniquely by a $C^*$-Frobenius algebra $Q$. The analogous result of Main Thm. \ref{lb12} for a  concrete extension $(\mc H_a,\mc N\subset\mc M,\iota)$ determined by $Q$ (in the $C^*$-tensor category of $\mc N-\mc N$ bimodules) is the well known fact that the $C^*$-category of left $Q$-modules is canonically isomorphic to the $C^*$-category of $\mc M-\mc N$ bimodules.

We may define isomorphisms of concrete extensions $(\mc H_a,\mc N\subset\mc M^a,\iota^a),(\mc H_b,\mc N\subset\mc M^b,\iota^b)$ to be the isomorphisms of von Neumann algebras $\mc M^a\rightarrow\mc M^b$ that restrict to the identity map on $\mc N$. Unitary isomorphisms are those implemented by unitary maps $V:\mc H_a\rightarrow\mc H_b$ such that $V\iota^a=\iota^b$. Then the isomorphism classes of concreted extensions are roughly the same as abstract extensions. The analogous result of Thm. \ref{lb13} is the following:
\begin{enumerate}[label=(\alph*)]
	\item If $(\mc H_a,\mc N\subset\mc M^a,\iota^a),(\mc H_b,\mc N\subset\mc M^b,\iota^b)$ are isomorphic, and if the first concrete extension is realized by a $C^*$-Frobenius algebra $Q^a$, then the second one is realized by some $Q^b$.
	\item Left isomorphisms from $Q^a$ to $Q^b$ correspond bijectively to isomorphisms from the first concrete extension to the second one. A left isomorphism is unitary iff the corresponding isomorphism of concrete extensions is unitary.
\end{enumerate}
We do not prove these statements in this article. But they can be proved using  arguments similar to  those in this article.

In the more familiar subfactor language, (a) says the well known fact that if $\mc N\subset\mc M$  has a finite index faithful normal conditional expectation $\mc E_1$, then any other faithful normal  conditional expectation $\mc E_2:\mc M\rightarrow\mc N$ also has finite index (i.e., can be realized by $C^*$-Frobenius algebras). We have used this result in the proof of Thm. \ref{lb11}. But we have avoided using this when proving Thm. \ref{lb13}. (b) says that two extensions $(\mc N\subset\mc M^a,\mc E^a)$ and $(\mc N\subset\mc M^b,\mc E^b)$ with the data of finite index faithful normal conditional expectations, together with an isomorphism of von Neumann algebras $\mc M^a\rightarrow\mc M^b$ fixing $\mc N$, corresponds  to a left isomorphism of $C^*$-Frobenius algebras. If the isomorphism $\mc M^a\rightarrow\mc M^b$ intertwines $\mc E^a$ and $\mc E^b$, then it corresponds to a unitary  isomorphism of $C^*$-Frobenius algebras.

In particular, by choosing $\mc M^a=\mc M^b=\mc M$ and choosing the isomorphism to be $\id$, we see that  an ordered pair of two finite index faithful normal conditional expectations $(\mc E_1,\mc E_2)$ for $\mc N\subset\mc M$ corresponds to a (non-necessarily unitary) left isomorphism of $C^*$-Frobenius algebras. Therefore, the study of the faithful normal conditional expectations for a given abstract finite index extension $\mc N\subset\mc M$ can be transformed  to that of  the left isomorphisms of $C^*$-Frobenius algebras. 

We remark that the relation between  concrete extensions and  extensions with fixed conditional expectations is similar to that between Connes fusion and the theory of (superselection) sectors: the former ``Schr\"odinger picture" focuses  on  representations of von Neumann algebras on Hilbert spaces, and the latter ``Heisenberg picture" focuses on maps (morphisms and conditional expectations) between von Neumann algebras.
\end{rem}

\section{An example}

In this section, we choose a dualizable $\mc A$-module $\mc H_i$ with dual object $\mc H_{\ovl i}$ and $\mc A$-module morphisms $\ev_{\ovl i,i}:\mc H_{\ovl i}\boxtimes\mc H_i\rightarrow\mc H_0,\ev_{i,\ovl i}:\mc H_i\boxtimes\mc H_{\ovl i}\rightarrow\mc H_0$ satisfying the conjugate equations \eqref{eq22}. Then we have a net of Jones-Wassermann subfactors $I\mapsto \pi_{i,I}(\mc A(I))\subset \pi_{i,I'}(\mc A(I'))'$. This is not an extension in our sense (cf. Def. \ref{lb16}). But we shall see that it is a module of a finite index extension $\mc B_Q$ of $\mc A$.

Set $\coev_{i,\ovl i}=(\ev_{i,\ovl i})^*$ and $\coev_{\ovl i,i}=(\ev_{\ovl i,i})^*$. For each $\xi\in\mc H_i(I)$, set
\begin{align*}
S_{\wtd I}\xi=L(\xi,\wtd I)^*\coev_{i,\ovl i}\Omega.
\end{align*}
Then, as $L(\xi,\wtd I)^*\coev_{i,\ovl i}\in\Hom_{\mc A}(\mc H_0,\mc H_{\ovl i})$, we have $S_{\wtd I}\xi\in\mc H_{\ovl i}(I)$. We need a special case of \cite[Cor. 5.6]{Gui21b}:
\begin{align}
\ev_{\ovl i,i}L(S_{\wtd I}\xi,\wtd I)|_{\mc H_i}=L(\xi,\wtd I)^*|_{\mc H_i}.	\label{eq23}
\end{align}
We give a proof here since it is straightforward: we have
\begin{align*}
&\ev_{\ovl i,i}L(S_{\wtd I}\xi,\wtd I)|_{\mc H_i}=\ev_{\ovl i,i}L(L(\xi,\wtd I)^*\coev_{i,\ovl i}\Omega,\wtd I)|_{\mc H_i}\\	
=&\ev_{\ovl i,i}L(\xi,\wtd I)^*(\coev_{i,\ovl i}\boxtimes\id_i)L(\Omega,\wtd I)|_{\mc H_i}=L(\xi,\wtd I)^*(\id_i\boxtimes\ev_{\ovl i,i})(\coev_{i,\ovl i}\boxtimes\id_i)|_{\mc H_i},
\end{align*}
which equals $L(\xi,\wtd I)^*$ by the conjugate equations \eqref{eq22}. 

We define a $C^*$-Frobenius algebra $Q=(\mc H_i\boxtimes\mc H_{\ovl i},\mu,\coev_{i,\ovl i})$ where
\begin{align*}
\mu=\id_i\boxtimes\ev_{\ovl i,i}\boxtimes\id_{\ovl i}\quad\in\Hom_{\mc A}(\mc H_i\boxtimes\mc H_{\ovl i}\boxtimes\mc H_i\boxtimes\mc H_{\ovl i},\mc H_i\boxtimes\mc H_{\ovl i}).	
\end{align*}
Then $(\mc H_i,\mu_i)$ is a left $Q$-module where
\begin{align*}
	\mu_i=\id_i\boxtimes\ev_{\ovl i,i}\quad\in\Hom_{\mc A}(\mc H_i\boxtimes\mc H_{\ovl i}\boxtimes\mc H_i,\mc H_i).	
\end{align*}
Let $(\mc H_i,\pi_i)$ be the corresponding $(\mc B_Q,\mc A)$-module defined by Thm. \ref{lb6}.

\begin{pp}
For each $\wtd I\in\Jtd$, we have
\begin{align*}
\pi_{i,\wtd I}(\mc B_Q(\wtd I))=\pi_{i,I'}(\mc A(I'))'.	
\end{align*}
\end{pp}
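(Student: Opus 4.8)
The plan is to combine the explicit description of $\mc B_Q(\wtd I)$ from Theorem \ref{lb5} with the defining relation \eqref{eq4} for $\pi_i$, and then to reduce the nontrivial inclusion to the identity \eqref{eq23}.

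By Theorem \ref{lb5}, every element of $\mc B_Q(\wtd I)$ equals $\mu L(\xi,\wtd I)|_{\mc H_a}$ for some $\xi\in\mc H_a(I)=(\mc H_i\boxtimes\mc H_{\ovl i})(I)$ (here $\mc H_a=\mc H_i\boxtimes\mc H_{\ovl i}$), so \eqref{eq4} gives
\[
\pi_{i,\wtd I}\big(\mc B_Q(\wtd I)\big)=\big\{\mu_i L(\xi,\wtd I)|_{\mc H_i}:\xi\in(\mc H_i\boxtimes\mc H_{\ovl i})(I)\big\}.
\]
The inclusion $\subseteq$ is then immediate: $L(\xi,\wtd I)|_{\mc H_i}$ lies in $\Hom_{\mc A(I')}\big(\mc H_i,(\mc H_i\boxtimes\mc H_{\ovl i})\boxtimes\mc H_i\big)$ by the categorical extension axioms, and $\mu_i=\id_i\boxtimes\ev_{\ovl i,i}$ is an $\mc A$-module morphism, so each such operator commutes with $\pi_{i,I'}(\mc A(I'))$.

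For the reverse inclusion I would take arbitrary $\alpha,\beta\in\mc H_i(I)$ and set $\zeta:=L(\alpha,\wtd I)S_{\wtd I}\beta$; since $S_{\wtd I}\beta\in\mc H_{\ovl i}(I)$, Proposition \ref{lb1}(a) gives $\zeta\in(\mc H_i\boxtimes\mc H_{\ovl i})(I)$ together with $L(\zeta,\wtd I)|_{\mc H_i}=L(\alpha,\wtd I)L(S_{\wtd I}\beta,\wtd I)|_{\mc H_i}$. Applying $\mu_i$, pulling $\id_i\boxtimes\ev_{\ovl i,i}$ through $L(\alpha,\wtd I)$ by the generalized functoriality \eqref{eq2} (with $F=\id_i$ and $G=\ev_{\ovl i,i}$), and then invoking \eqref{eq23}, I expect to arrive at
\[
\mu_i L(\zeta,\wtd I)|_{\mc H_i}=L(\alpha,\wtd I)|_{\mc H_0}\cdot L(\beta,\wtd I)^*|_{\mc H_i}=L(\alpha,\wtd I)|_{\mc H_0}\cdot\big(L(\beta,\wtd I)|_{\mc H_0}\big)^*,
\]
an operator on $\mc H_i$ factoring through $\mc H_0$.

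To conclude I would use two standard facts. First, as $\alpha$ ranges over $\mc H_i(I)$ the operator $L(\alpha,\wtd I)|_{\mc H_0}$ ranges over all of $\Hom_{\mc A(I')}(\mc H_0,\mc H_i)$: surjectivity follows from $\mc H_i(I)=\Hom_{\mc A(I')}(\mc H_0,\mc H_i)\Omega$, from $L(A\Omega,\wtd I)|_{\mc H_0}\Omega=A\Omega$ by \eqref{eq14}, and from the Reeh--Schlieder cyclicity of $\Omega$ for $\mc A(I')$. Second, since $\mc A(I')$ is a type III$_1$ factor there is a unitary $u\in\Hom_{\mc A(I')}(\mc H_0,\mc H_i)$, i.e.\ a unitary vector in $\mc H_i(I)$ in the sense of Lemma \ref{lb2}, say $u=L(\beta_0,\wtd I)|_{\mc H_0}$. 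Taking $\beta=\beta_0$ in the displayed identity shows $\pi_{i,\wtd I}(\mc B_Q(\wtd I))\supseteq\Hom_{\mc A(I')}(\mc H_0,\mc H_i)\,u^*$; and for any $T\in\pi_{i,I'}(\mc A(I'))'$ one has $Tu\in\Hom_{\mc A(I')}(\mc H_0,\mc H_i)$ and $T=(Tu)u^*$, so $\Hom_{\mc A(I')}(\mc H_0,\mc H_i)\,u^*=\pi_{i,I'}(\mc A(I'))'$, which gives $\supseteq$ and hence equality. The one delicate step is the chain of identities producing the displayed formula: one must keep careful track of which tensor factor each $L$-operator and each $\ev_{\ovl i,i}$ acts on, and verify at each stage that the hypotheses of Proposition \ref{lb1}(a), of \eqref{eq2}, and of \eqref{eq23} are genuinely met. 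Everything else is bookkeeping with the vacuum vector together with the type III$_1$ input.
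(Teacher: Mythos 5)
Your proposal is correct and follows essentially the same route as the paper: the easy inclusion via relative locality, and the reverse inclusion by writing an arbitrary element of $\pi_{i,I'}(\mc A(I'))'$ as $L(\alpha,\wtd I)L(\beta_0,\wtd I)^*|_{\mc H_i}$ with $\beta_0$ a unitary vector, then converting it to $\mu_i L(L(\alpha,\wtd I)S_{\wtd I}\beta_0,\wtd I)|_{\mc H_i}$ via \eqref{eq23}, Prop.\ \ref{lb1}(a) and the functoriality \eqref{eq2}, exactly as in the paper (which runs the same computation in the opposite direction). The chain of identities you flag as delicate does go through as you expect.
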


Therefore, the subfactors $\pi_{i,I}(\mc A(I))\subset \pi_{i,I'}(\mc A(I'))'$ and $\pi_{i\boxtimes\ovl i,I}(\mc A(I))\subset\mc B_Q(\wtd I)$ are isomorphic.

\begin{proof}
The $\subset$ is obvious due to the relative locality (cf. Rem. \ref{lb17}). Choose any $T\in \pi_{i,I'}(\mc A(I'))'$. We claim that $T=L(\eta,\wtd I)L(\xi,\wtd I)^*|_{\mc H_i}$ for some $\xi,\eta\in\mc H_i(I)$. Indeed, choose any unitary vector $\xi\in\mc H_i(I)$ (cf. Lemma \ref{lb2}). Since $TL(\xi,\wtd I)|_{\mc H_0}:\mc H_0\rightarrow\mc H_i$ intertwines the actions of $\mc A(I')$, $\eta:=TL(\xi,\wtd I)\Omega$ belongs to $\mc H_i(I)$. The relation $L(\eta,\wtd I)|_{\mc H_0}=TL(\xi,\wtd I)|_{\mc H_0}$ holds when acting on $\Omega$. Thus, it holds when acting on $\mc A(I')\Omega$ since both sides intertwine the actions of $\mc A(I')$. This proves $T=L(\eta,\wtd I)L(\xi,\wtd I)^*|_{\mc H_i}$ by the unitarity of $L(\xi,\wtd I)$. 

Now, by \eqref{eq23},
\begin{align*}
&T=L(\eta,\wtd I)\ev_{\ovl i,i}L(S_{\wtd I}\xi,\wtd I)|_{\mc H_i}=(\id_i\boxtimes \ev_{\ovl i,i})L(\eta,\wtd I)L(S_{\wtd I}\xi,\wtd I)|_{\mc H_i}\\
=&\mu_i L(L(\eta,\wtd I)S_{\wtd I}\xi,\wtd I)|_{\mc H_i}.
\end{align*}
By \eqref{eq4}, $T$ equals $\pi_{i,\wtd I}(X)$ where $X=\mu L(L(\eta,\wtd I)S_{\wtd I}\xi,\wtd I)|_{\mc H_i\boxtimes\mc H_{\ovl i}}$ belongs to $\mc B_Q(\wtd I)$ (cf. \eqref{eq28}).
\end{proof}

\section{Commutative $C^*$-Frobenius algebras and local extensions}\label{lb19}

In this section, we fix a $C^*$-Frobenius algebra $Q=(\mc H_a,\mu,\iota)$, and let $(\mc H_a,\mc B_Q)$ be the extension of $\mc A$ associated associated to $Q=(\mc H_a,\mu,\iota)$.

\begin{df}
$Q$ is called \textbf{commutative} if $\mu\ss_{a,a}=\mu$. (Recall that $\ss$ is the braid operator.) It is called \textbf{irreducible} if it is an irreducible left $Q$-module.
\end{df}

Note that by \cite[Rem. 2.7]{NY18}, $Q$ is irreducible if and only if $\dim\Hom_{\mc A}(\mc H_0,\mc H_a)=1$

\begin{pp}\label{lb21}
Fix any $\wtd I_0\in\Jtd$. Then the following are equivalent
\begin{enumerate}[label=(\arabic*)]
\item $\mc B_Q(\wtd I_0)'=\mc B_Q(\wtd I_0')$.
\item $\mc B_Q(\wtd I)'=\mc B_Q(\wtd I')$ for every $\wtd I\in\Jtd$.
\item $Q$ is commutative. 
\end{enumerate}
If any of these holds, we say $\mc B_Q$ is a \textbf{local extension} of $\mc A$. If $\mc B_Q$ is local, then $\mc B_Q(\wtd I)$ depends only on $I$ but not on the arg function $\arg_I$. So we may write $\mc B_Q(\wtd I)$ as $\mc B_Q(I)$.
\end{pp}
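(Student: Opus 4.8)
The plan is to establish the cycle of implications $(3)\Rightarrow(2)\Rightarrow(1)\Rightarrow(3)$. The implication $(2)\Rightarrow(1)$ is trivial (specialize $\wtd I=\wtd I_0$), so the real content is $(3)\Rightarrow(2)$ and $(1)\Rightarrow(3)$. For $(3)\Rightarrow(2)$, fix $\wtd I\in\Jtd$. By Theorem \ref{lb5} we always have the inclusion $\mc B_Q(\wtd I')\subset\mc B_Q(\wtd I)'$ (relative locality gives one containment, and \eqref{eq20} identifies $\mc B_Q(\wtd I)'$ with operators built from $R$ on the clockwise complement). So I only need the reverse inclusion $\mc B_Q(\wtd I)'\subset\mc B_Q(\wtd I')$, i.e. every operator of the form $\mu R(\eta,\wtd I')|_{\mc H_a}$ with $\eta\in\mc H_a(I')$ already lies in $\{\mu L(\zeta,\wtd J)|_{\mc H_a}:\zeta\in\mc H_a(J)\}$ for $\wtd J$ the \emph{anticlockwise} complement $\wtd I'{}'$ of $\wtd I'$ (note $(\varrho(2\pi)\wtd I')''=\wtd I'$, and more to the point $\wtd I$ and $\wtd I'$ have the same underlying interval pair as $\wtd J$ and $\wtd I'$ up to an arg shift). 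The key computational step is: for $\eta\in\mc H_a(I')$, using the braiding axiom \eqref{eq24} together with commutativity $\mu\ss_{a,a}=\mu$, one gets $\mu R(\eta,\wtd I')\zeta=\mu\ss_{a,a}L(\eta,\wtd I')\zeta=\mu L(\eta,\wtd I')\zeta$ whenever the $L$ and $R$ act on a vector in the complementary localization; more carefully, apply \eqref{eq25}/\eqref{eq24} to rewrite $R(\eta,\wtd I')$ as $\ss_{a,a}L(\eta,\wtd I')$ (as maps $\mc H_a\to\mc H_a\boxtimes\mc H_a$, postcomposed into the relevant factor) and then absorb $\ss_{a,a}$ into $\mu$ by commutativity. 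Thus $\mu R(\eta,\wtd I')|_{\mc H_a}=\mu L(\eta,\wtd I')|_{\mc H_a}$, and the latter, being built from $L$ on an arg-valued interval with underlying interval $I'$, lies in $\mc B_Q$ evaluated at the appropriate arg-valued version of $I'$; comparing with $\mc B_Q(\wtd I')$ via Isotony (Theorem \ref{lb5}, isotony of $\mc B_Q$) and the fact that $\mc A$ is additive finishes the containment. This also shows $\mc B_Q(\wtd I)$ is independent of the arg function.

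For $(1)\Rightarrow(3)$, I run the same identity backwards but now only at $\wtd I_0$. The hypothesis $\mc B_Q(\wtd I_0)'=\mc B_Q(\wtd I_0')$ says precisely that for every $\eta\in\mc H_a(I_0')$ the operator $\mu R(\eta,\wtd I_0')|_{\mc H_a}$ can also be written as $\mu L(\zeta,\wtd I_0')|_{\mc H_a}$ for some $\zeta\in\mc H_a(I_0')$; and by the separating property of $\iota\Omega$ (Remark \ref{lb27}, \eqref{eq32}) applying both sides to $\iota\Omega$ forces $\zeta=\mu R(\eta,\wtd I_0')\iota\Omega=\mu(\id_a\boxtimes\iota)R(\eta,\wtd I_0')\Omega=\eta$. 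So $\mu R(\eta,\wtd I_0')|_{\mc H_a}=\mu L(\eta,\wtd I_0')|_{\mc H_a}$ for all $\eta\in\mc H_a(I_0')$. Now evaluate both sides on a vector $\xi\in\mc H_a(I_0)$ localized in the (anticlockwise) complement: $\mu R(\eta,\wtd I_0')\xi=\mu L(\eta,\wtd I_0')\xi$. Using \eqref{eq24} the right side equals $\mu\ss_{a,a}^{-1}R(\eta,\wtd I_0')\xi$ composed appropriately — more precisely, $R(\eta,\wtd I_0')\xi=\ss_{a,a}L(\eta,\wtd I_0')\xi$, so $\mu L(\eta,\wtd I_0')\xi = \mu R(\eta,\wtd I_0')\xi = \mu\ss_{a,a}L(\eta,\wtd I_0')\xi$. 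Since vectors of the form $L(\eta,\wtd I_0')\xi$ with $\eta\in\mc H_a(I_0'),\xi\in\mc H_a(I_0)$ span a dense subspace of $\mc H_a\boxtimes\mc H_a$ (density of fusion products, Definition \ref{lb23}(d), applied on the vacuum and then extended — or directly), we conclude $\mu=\mu\ss_{a,a}$, i.e. $Q$ is commutative.

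The last sentence of the Proposition (that $\mc B_Q(\wtd I)$ depends only on $I$) follows from the argument in $(3)\Rightarrow(2)$: two arg-valued intervals with the same underlying $I$ differ by $\varrho(2\pi k)$, and commutativity makes the $L$-operators insensitive to this shift when composed with $\mu$ (this is exactly Lemma \ref{lb20} combined with $\mu\ss_{j,i}\ss_{i,j}=\mu$, which holds since $\mu\ss_{a,a}=\mu$ implies $\mu\ss_{a,a}\ss_{a,a}=\mu$). I expect the main obstacle to be bookkeeping the arg-valued intervals correctly — keeping straight the distinction between the clockwise complement $\wtd I'$, its anticlockwise complement $\wtd I''=\wtd I$, and rotations by $\varrho(2\pi)$ — and making sure the density/spanning statement used in $(1)\Rightarrow(3)$ is applied to a genuinely dense set of simple tensors $L(\eta,\wtd I_0')\xi$ rather than just on the vacuum module; the cleanest route is to first prove the operator identity $\mu R(\eta,\wtd I_0')|_{\mc H_a}=\mu L(\eta,\wtd I_0')|_{\mc H_a}$ and then invoke \eqref{eq2} (generalized functoriality) to transport it, together with Proposition \ref{lb1}, to the full fusion product.
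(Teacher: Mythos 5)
Your argument is in substance the paper's: everything rests on the identity $\mu R(\eta,\wtd I')|_{\mc H_a}=\mu\ss_{a,a}L(\eta,\wtd I')|_{\mc H_a}$ from the braiding axiom \eqref{eq24}, the descriptions \eqref{eq28} of $\mc B_Q(\wtd I')$ and \eqref{eq20} of $\mc B_Q(\wtd I)'$, evaluation at $\iota\Omega$ to pin down the localized vector, and density of fusion products to recover $\mu=\mu\ss_{a,a}$; all of these steps you carry out correctly.

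Two remarks. First, your opening claim in (3)$\Rightarrow$(2) that $\mc B_Q(\wtd I')\subset\mc B_Q(\wtd I)'$ ``always'' holds by relative locality is false: relative locality only places $\pi_{a,I'}(\mc A(I'))$ inside $\mc B_Q(\wtd I)'$, and in fact that single inclusion is already equivalent to commutativity of $Q$ --- your own evaluation-at-$\iota\Omega$ plus density argument (which is exactly the proof of (1)$\Rightarrow$(3)) shows this. The slip is not load-bearing, because the identity $\mu L(\eta,\wtd I')|_{\mc H_a}=\mu R(\eta,\wtd I')|_{\mc H_a}$ that you derive from commutativity gives both inclusions at once via \eqref{eq20} and \eqref{eq28}; also, no isotony, additivity, or detour through the anticlockwise complement is needed there, since $\mu L(\eta,\wtd I')|_{\mc H_a}$ is an element of $\mc B_Q(\wtd I')$ by definition \eqref{eq28}. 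Second, two minor points: in computing $\mu R(\eta,\wtd I_0')\iota\Omega$ the embedding goes into the left tensor factor, i.e.\ $(\iota\boxtimes\id_a)R(\eta,\wtd I_0')\Omega$ rather than $(\id_a\boxtimes\iota)R(\eta,\wtd I_0')\Omega$ (the value is $\eta$ either way, by the unit property); and for arg-independence you invoke Lemma \ref{lb20} together with $\mu\ss_{a,a}\ss_{a,a}=\mu$, which is valid (it is the mechanism behind Prop.\ \ref{lb22}), whereas the paper gets it more quickly from statement (2): $\mc B_Q(\wtd I)$ and $\mc B_Q(\varrho(-2\pi)\wtd I)$ are both commutants of $\mc B_Q(\wtd I')$, hence equal. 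Both routes work.
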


\begin{proof}
Note that 	for each $\eta\in\mc H_i(I')$, $\mu\ss_{a,a}L(\eta,\wtd I')|_{\mc H_a}=\mu R(\eta,\wtd I')|_{\mc H_a}$ by the braiding axiom \eqref{eq24}. Also (2)$\Rightarrow$(1) is obvious. 

(3)$\Rightarrow$(2): If $\mu=\mu\ss_{a,a}$, then $\mu L(\eta,\wtd I')|_{\mc H_a}=\mu R(\eta,\wtd I')|_{\mc H_a}$.  By \eqref{eq20}, $\mu R(\eta,\wtd I')|_{\mc H_a}\in\mc B_Q(\wtd I)'$, and any element of $\mc B_Q(\wtd I)'$ is of this form. Thus $\mc B_Q(\wtd I')=\mc B(\wtd I)'$.

(1)$\Rightarrow$(3): Suppose $\mc B_Q(\wtd I_0')=\mc B_Q(\wtd I_0)'$. For each $\eta\in\mc H_i(I_0')$, $Y=\mu L(\eta,\wtd I_0')|_{\mc H_a}$	is an element of $\mc B(\wtd I_0)'$, which by \eqref{eq20} is of the form $\mu R(\eta_0,\wtd I_0')|_{\mc H_a}$ for some $\eta_0\in\mc H_i(I_0')$. We have $\eta_0=\eta$ since $Y\iota\Omega=\mu L(\eta,\wtd I_0')\iota\Omega=\mu(\id_a\boxtimes\iota)L(\eta,\wtd I_0')\Omega=\eta$ and similarly $Y\iota\Omega=\eta_0$. So $\mu L(\eta,\wtd I_0')|_{\mc H_a}=\mu\ss_{a,a}L(\eta,\wtd I_0')|_{\mc H_a}$. By the density of fusion product, we get $\mu=\mu\ss_{a,a}$.

If $\mc B_Q$ is local, then both $\mc B_Q(\wtd I)$ and $\mc B_Q(\wtd I'')=\mc B_Q(\varrho(-2\pi)\wtd I)$ are the commutants of $\mc B_Q(\wtd I')$. So they are equal. This proves that $\mc B_Q(\wtd I)$ is independent of $\arg_I$.
\end{proof}

\begin{df}
A \textbf{dyslectic $Q$-module} $(\mc H_i,\mu_i)$ is by definition a left $Q$-module satisfying $\mu_i\ss_{i,a}=\mu_i\ss_{a,i}^{-1}$. Thus, $\mc H_i$ is automatically a $Q$-bimodule with left action $\mu_i$ and right action $\mu_i\ss_{i,a}$.
\end{df}

\begin{df}
When $\mc B_Q$ is local, a $(\mc B_Q,\mc A)$-module $(\mc H_i,\pi_i)$ is called \textbf{dyslectic} if $\pi_{i,\wtd I}$ depends only on $I$ but not on $\arg_I$. So we may write $\pi_{i,\wtd I}$ as $\pi_{i,I}$. A dyslectic $(\mc B_Q,\mc A)$-modules is simply called a \textbf{$\mc B_Q$-module}, which is in line with the definition of $\mc A$-modules in Sec. \ref{lb18}.
\end{df}

\begin{pp}\label{lb22}
Let $Q$ be commutative. Let $(\mc H_i,\mu_i)$ be a left $Q$-module, and let $(\mc H_i,\pi_i)$ be the corresponding $(\mc B_Q,\mc A)$-module as in Thm. \ref{lb6}. Then $(\mc H_i,\mu_i)$ is dyslectic if and only if $(\mc H_i,\pi_i)$ is so.
\end{pp}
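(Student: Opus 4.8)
The plan is to translate both the dyslecticity condition for $(\mc H_i,\mu_i)$ and for $(\mc H_i,\pi_i)$ into statements about $L$ and $R$ operators, and then to show these two statements coincide using the key relation \eqref{eq4} together with Lemma \ref{lb20}. First I would recall that, by \eqref{eq4}, for $\xi\in\mc H_a(I)$ and $\eta\in\mc H_i$ we have $\pi_{i,\wtd I}(\mu L(\xi,\wtd I)|_{\mc H_a})\eta=\mu_iL(\xi,\wtd I)\eta$, and by Rem. \ref{lb27} every element of $\mc B_Q(\wtd I)$ has this form with $\xi=X\iota\Omega$. So $(\mc H_i,\pi_i)$ is dyslectic iff, for each $\wtd I=(I,\arg_I)$, the operator $\pi_{i,\wtd I}(X)$ is unchanged when $\arg_I$ is replaced by $\arg_I+2\pi$, i.e.\ when $\wtd I$ is replaced by $\varrho(2\pi)\wtd I$ (which does not change $I$ and does not change $X\in\mc B_Q(\wtd I)$, since by Prop. \ref{lb21} local-ness makes $\mc B_Q(\wtd I)=\mc B_Q(I)$ independent of $\arg_I$). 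Applying \eqref{eq4} for both $\wtd I$ and $\varrho(2\pi)\wtd I$, this is equivalent to
\begin{align*}
\mu_iL(\xi,\varrho(2\pi)\wtd I)\eta=\mu_iL(\xi,\wtd I)\eta
\end{align*}
for all $\xi\in\mc H_a(I)$ and $\eta\in\mc H_i$.

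The second step is to invoke Lemma \ref{lb20}, which gives $L(\xi,\varrho(2\pi)\wtd I)\eta=\ss_{i,a}\ss_{a,i}L(\xi,\wtd I)\eta$ (here the roles of the indices: $\xi\in\mc H_a(I)$ so $L(\xi,\wtd I)$ maps $\mc H_i\to\mc H_a\boxtimes\mc H_i$, and Lemma \ref{lb20} reads $L(\xi,\varrho(2\pi)\wtd I)\eta=\ss_{i,a}\ss_{a,i}L(\xi,\wtd I)\eta$). Hence the displayed identity becomes $\mu_i\ss_{i,a}\ss_{a,i}L(\xi,\wtd I)\eta=\mu_iL(\xi,\wtd I)\eta$, and by the density of fusion products (Def. \ref{lb23}(d)), which says $L(\mc H_a(I),\wtd I)\mc H_i$ spans $\mc H_a\boxtimes\mc H_i$, this holds for all $\xi,\eta$ if and only if $\mu_i\ss_{i,a}\ss_{a,i}=\mu_i$ as morphisms $\mc H_a\boxtimes\mc H_i\to\mc H_i$. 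So $(\mc H_i,\pi_i)$ is dyslectic iff $\mu_i\ss_{i,a}\ss_{a,i}=\mu_i$.

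The final step is to check that the condition $\mu_i\ss_{i,a}\ss_{a,i}=\mu_i$ is equivalent to the defining dyslecticity condition $\mu_i\ss_{i,a}=\mu_i\ss_{a,i}^{-1}$ for $(\mc H_i,\mu_i)$. One direction is immediate: if $\mu_i\ss_{i,a}=\mu_i\ss_{a,i}^{-1}$ then $\mu_i\ss_{i,a}\ss_{a,i}=\mu_i\ss_{a,i}^{-1}\ss_{a,i}=\mu_i$. For the converse I would use commutativity of $Q$: the relation $\mu\ss_{a,a}=\mu$ on $\mc H_a$, combined with the associativity/Frobenius structure of the $Q$-module, propagates to a relation on $\mc H_i$. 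Concretely, composing $\mu_i\ss_{i,a}\ss_{a,i}=\mu_i$ on the right with $\ss_{a,i}^{-1}$ gives $\mu_i\ss_{i,a}=\mu_i\ss_{a,i}^{-1}$ only if $\ss_{a,i}$ is iso — which it is, being unitary — so in fact the two conditions are trivially equivalent, since $\ss_{a,i}$ is invertible: $\mu_i\ss_{i,a}\ss_{a,i}=\mu_i \iff \mu_i\ss_{i,a}=\mu_i\ss_{a,i}^{-1}$. So no appeal to commutativity of $Q$ is even needed at this last step (commutativity was used implicitly to know $\mc B_Q$ is local, via Prop. \ref{lb21}, so that the notion of dyslectic $(\mc B_Q,\mc A)$-module makes sense).

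The main obstacle I anticipate is bookkeeping the index placement in Lemma \ref{lb20}: that lemma is stated with the labeling $\xi\in\mc H_i(I),\eta\in\mc H_j$ and yields $\ss_{j,i}\ss_{i,j}$, so I must be careful to apply it with $i\rightsquigarrow a$ and $j\rightsquigarrow i$, getting the composite $\ss_{i,a}\ss_{a,i}$ acting on $\mc H_a\boxtimes\mc H_i$, and to confirm this is the same composite appearing in the dyslecticity condition (up to the trivial reindexing $\ss_{i,a}=(\ss_{a,i})^{-1}$ only under dyslecticity, not unconditionally). A secondary point requiring a line of justification is that $X\in\mc B_Q(\wtd I)$ really is the same element for $\wtd I$ and $\varrho(2\pi)\wtd I$ and that $\xi=X\iota\Omega$ correspondingly does not change — this is exactly Prop. \ref{lb21}'s statement that $\mc B_Q(\wtd I)$ is independent of $\arg_I$ when $Q$ is commutative, so it is safe to cite.
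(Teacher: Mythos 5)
Your proposal is correct and follows essentially the same route as the paper: identify $\pi_{i,\wtd I}(X)=\mu_iL(X\iota\Omega,\wtd I)|_{\mc H_i}$ via \eqref{eq4} and Rem. \ref{lb27}, compare $\wtd I$ with $\varrho(2\pi)\wtd I$ using Lemma \ref{lb20} to get $\pi_{i,\varrho(2\pi)\wtd I}(X)=\mu_i\ss_{i,a}\ss_{a,i}L(X\iota\Omega,\wtd I)|_{\mc H_i}$, and conclude by density of fusion products that dyslecticity of $\pi_i$ is equivalent to $\mu_i\ss_{i,a}\ss_{a,i}=\mu_i$, which is trivially equivalent to $\mu_i\ss_{i,a}=\mu_i\ss_{a,i}^{-1}$. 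Your extra remarks (that commutativity enters only through Prop. \ref{lb21} to make $\mc B_Q(\wtd I)$ and hence $X$, $\xi=X\iota\Omega$ independent of $\arg_I$, and the careful index placement in Lemma \ref{lb20}) are consistent with, and slightly more explicit than, the paper's argument.
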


\begin{proof}
For each $\wtd I\in\Jtd$, let $\wtd I_1=\varrho(2\pi)\wtd I$. For each  $X=\mu L(\xi,\wtd I)|_{\mc H_a}=\mu L(\xi_1,\wtd I_1)|_{\mc H_a}$ in $\mc B_Q(I)$,  $\xi$ and $\xi_1$ must be equal since both equal $X\iota\Omega$ by Rem. \ref{lb27}. By Lemma \ref{lb20}, we have
\begin{gather*}
\pi_{i,\wtd I}(X)=\mu_i L(\xi,\wtd I)|_{\mc H_a},\qquad \pi_{i,\wtd I_1}(X)=\mu_i\ss_{i,a}\ss_{a,i}  L(\xi,\wtd I)|_{\mc H_a}.	
\end{gather*}
Thus, by the density of fusion product, we have $\pi_{i,\wtd I}=\pi_{i,\wtd I_1}$ iff $\mu_i=\mu_i\ss_{i,a}\ss_{a,i}$.
\end{proof}

\begin{df}
We say that $\mc B_Q$ is a \textbf{local M\"obius extension} of $\mc A$ if $\mc B_Q$ is a (local) M\"obius net, and if the representation of $\PSU$ on $\mc H_0$ extends to that of $\mc H_a$. Similarly, when $\mc A$ is a conformal net, we say that $\mc B_Q$ is a \textbf{local conformal extension} of $\mc A$ if $\mc B_Q$ is a conformal net, and if the projective representation of $\Diffp(\Sbb^1)$ on $\mc H_0$ extends to that of $\mc H_a$. In either case, we let $\iota\Omega$ be the vacuum vector for $\mc B_Q$.
\end{df}

%If $\mc B_Q$ is a local M\"obius  or conformal extension, then $(\mc B_Q,\mc A)$-modules mean twisted $(\mc B_Q,\mc A)$-modules as defined in Def. \ref{lb7}. Untwisted $(\mc B_Q,\mc A)$-modules are those defined as in Sec. \ref{lb18} for $\mc A$.

Since any rigid braided $C^*$-tensor category has a canonical ribbon structure \cite{Mug00}, we have a unitary \textbf{twist operator} $\vartheta_a\in\End_{\mc A}(\mc H_a)$ which commutes with $\End_{\mc A}(\mc H_a)$. (Note that $\mc H_a$ is automatically dualizable.) When $\mc H_a$ is M\"obius covariant, by the conformal spin-statistics theorem (\cite[Thm. 3.13]{GL96}, \cite[Sec. 4.1]{Jor96}, or \cite[Thm. 6.8]{Gui21b}), $\vartheta_a$ equals the action of $\varrho(2\pi)\in\UPSU$ (the rotation by $2\pi$).

\begin{thm}\label{lb25}
Let $Q=(\mc H_a,\mu,\iota)$ be a $C^*$-Frobenius algebra in $\RepA$. Then $\mc B_Q$ is a local M\"obius extension of $\mc A$ with vacuum vector $\iota\Omega$ if and only if the following are true:
\begin{enumerate}[label=(\arabic*)]
\item $\mc H_a$ is a M\"obius covariant $\mc A$-module.
\item $Q$ is commutative and the twist $\vartheta_a=\id_a$.
\end{enumerate}
Moreover, $\mc B_Q$ is irreducible if and only if $Q$ is irreducible. If $\mc A$ is a conformal net, then (1) automatically holds, and (2) is equivalent to that $\mc B_Q$ is a local conformal extension.
\end{thm}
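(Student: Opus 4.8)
The plan is to prove the biconditional by matching, on the algebraic side, ``$Q$ is commutative and $\vartheta_a=\id_a$'' with, on the net side, ``the (unique) $\UPSU$-covariance of the dualizable $\mc A$-module $\mc H_a$ descends through $\varrho(2\pi)$ to a $\PSU$-covariance of a \emph{local} net''; the conformal spin--statistics theorem (applicable since $\mc H_a$ is automatically dualizable) makes this precise by identifying $\vartheta_a$ with the action of $\varrho(2\pi)\in\UPSU$. Since rescaling $\iota$ by a positive scalar and $\mu$ by its inverse changes neither $\mc B_Q$ (as $\mc B_Q(\wtd I)$ is a linear space) nor conditions (1),(2), I would first reduce to $Q$ normalized, whence $\iota\Omega$ is a unit vector.

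Assume (1) and (2). Condition (1) supplies, via \cite[Prop. 2.2]{GL96}, the unique strongly continuous positive-energy $\UPSU$-representation $U_a$ on $\mc H_a$, and Thm.~\ref{lb5} then gives $g\,\mc B_Q(\wtd I)\,g^{-1}=\mc B_Q(g\wtd I)$ for $g\in\UPSU$. Commutativity of $Q$ gives, by Prop.~\ref{lb21}, the duality $\mc B_Q(\wtd I)'=\mc B_Q(\wtd I')$ and the independence of $\mc B_Q(\wtd I)$ from the arg function; hence $I\mapsto\mc B_Q(I)$ is isotonous and local (for disjoint $I_1,I_2$ pick $\wtd I_1$ and $\wtd I_2\subset\wtd I_1'$, so $\mc B_Q(I_2)\subset\mc B_Q(\wtd I_1')=\mc B_Q(I_1)'$). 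Next, $\vartheta_a=\id_a$ together with spin--statistics forces $U_a(\varrho(2\pi))=\id$, so $U_a$ descends to a strongly continuous $\PSU$-representation on $\mc H_a$, and arg-independence upgrades the $\UPSU$-covariance to genuine $\PSU$-covariance of the net. Finally $\iota\Omega$ is a $\PSU$-invariant unit vector cyclic for $\bigvee_I\mc B_Q(I)$ by Reeh--Schlieder for the extension, and the intertwining relation $U_a\,\iota=\iota\,U^{\mc A}$ (morphisms of dualizable Möbius covariant $\mc A$-modules intertwine $\UPSU$) shows the $\PSU$-representation of $\mc A$ on $\mc H_0$ extends to $\mc H_a$. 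Thus $\mc B_Q$ is a local Möbius extension with vacuum $\iota\Omega$.

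Conversely, assume $\mc B_Q$ is a local Möbius extension with vacuum $\iota\Omega$. Haag duality for the Möbius net $\mc B_Q$ gives $\mc B_Q(\wtd I)'=\mc B_Q(\wtd I')$, so $Q$ is commutative by Prop.~\ref{lb21}. Let $\tilde U$ be the pullback to $\UPSU$ of the $\PSU$-representation of the net on $\mc H_a$; I claim $\tilde U$ makes $\mc H_a$ a Möbius covariant $\mc A$-module. It fixes the vacuum $\iota\Omega$ and carries $\mc B_Q(\wtd I)$ onto $\mc B_Q(g\wtd I)$; and since $\mc A$'s vacuum representation extends, $\tilde U(g)\iota=\iota\,U^{\mc A}(\bar g)$, so for $x\in\mc A(I)$ the operator $\tilde U(g)\pi_{a,I}(x)\tilde U(g)^*$ sends $\iota\Omega$ to $\iota\big((\bar g\!\cdot\!x)\Omega\big)$, and comparing with \eqref{eq7} and Rem.~\ref{lb27} identifies it with $\pi_{a,gI}(\bar g\!\cdot\!x)$ --- i.e. (1) holds. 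Since $\mc H_a$ is dualizable, $\tilde U$ must be the canonical representation of \cite[Prop. 2.2]{GL96}, so spin--statistics gives $\vartheta_a=\tilde U(\varrho(2\pi))=\id_a$, which is (2).

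For the irreducibility clause I would note that for the vacuum $(\mc B_Q,\mc A)$-module $\End_{\mc B_Q}(\mc H_a)=\bigcap_{\wtd I}\mc B_Q(\wtd I)'$, which by Main Thm.~\ref{lb12} equals $\EndL_Q(\mc H_a)$ for the corresponding left $Q$-module, namely $(\mc H_a,\mu)=Q$ itself; hence $\mc B_Q$ is irreducible as a net iff $\EndL_Q(\mc H_a)=\Cbb\id_a$, i.e. iff $Q$ is an irreducible left $Q$-module, equivalently $\dim\Hom_{\mc A}(\mc H_0,\mc H_a)=1$ by \cite[Rem. 2.7]{NY18}. When $\mc A$ is conformal every $\mc A$-module is Möbius covariant, so (1) is automatic; a local conformal extension restricts (via $\PSU\subset\Diffp(\Sbb^1)$) to a local Möbius extension, so (2) follows from the main equivalence, and conversely (2) gives a local Möbius extension while the conformal covariance of $\scr E$ in \eqref{eq17} together with Thm.~\ref{lb5} supplies the $\Gc$-covariance of $\mc B_Q$; $\vartheta_a=\id_a$ (spin--statistics again, for a lift of $\varrho(2\pi)$ in $\Gc$) makes this $\Gc$-representation descend to a projective $\Diffp(\Sbb^1)$-representation on $\mc H_a$, with \eqref{eq1} placing the generators supported in $I$ inside $\pi_{a,I}(\mc A(I))\subset\mc B_Q(\wtd I)$, exhibiting $\mc B_Q$ as a local conformal extension. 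I expect the main obstacle to be the bookkeeping in the converse direction --- verifying that the pulled-back net representation $\tilde U$ genuinely realizes an $\mc A$-module Möbius covariance, and, in the conformal case, that the $\Gc$-action descends cleanly past the central $\Sbb^1$ and the $\varrho(2\pi)$-torsion, with the twist $\vartheta_a$ being the sole obstruction to that descent.
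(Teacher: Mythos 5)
Your proposal is correct and follows essentially the same route as the paper's proof: locality of $\mc B_Q$ $\Leftrightarrow$ commutativity of $Q$ via Prop.~\ref{lb21}, covariance of $\mc B_Q$ from Thm.~\ref{lb5}, descent of the $\UPSU$-action on $\mc H_a$ to $\PSU$ exactly when $\varrho(2\pi)$ acts trivially, identified with $\vartheta_a=\id_a$ by the spin--statistics theorem, and the irreducibility clause from Main Thm.~\ref{lb12}. The differences are only in bookkeeping (you spell out the ``automatic'' M\"obius covariance of $\mc H_a$ in the converse and the vacuum/Reeh--Schlieder checks) and in attribution of positivity of the energy, which you credit to \cite[Prop.~2.2]{GL96} while the paper cites \cite[Cor.~4.4]{BCL98} (resp.\ \cite{Wei06} in the conformal case) --- immaterial here since $\mc H_a$ is dualizable.
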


Note that when $Q$ is commutative and irreducible, it is automatic that $\vartheta_a=\id_a$. See \cite[Prop. 2.22]{CGGH23}.

\begin{proof}

$\mc H_a$ is automatically a M\"obius  covariant $\mc A$-module if $\mc B_Q$ is a local M\"obius extension. So we may always assume (1) in the rest of  the proof. Then by \cite[Cor. 4.4]{BCL98} or (in the case that $\mc A$ is a conformal net) \cite{Wei06}, the generator $L_0$ of the rotation subgroup $\varrho$ has positive spectrum when acting on $\mc H_a$. The representation of $\UPSU$ descends to a true representation of $\PSU$ iff $\varrho(2\pi)$ acts as $\id_a$. Therefore, the equivalence of (2) and that $\mc B_Q$ is a local M\"obius resp. conformal extension follows from Prop. \ref{lb21}  and the covariance property of $\mc B_Q$ stated in Thm. \ref{lb5}. The equivalence of the two irreducible conditions follows from Main Thm. \ref{lb12}.
\end{proof}

We now assume $Q$ is commutative and irreducible. Let $\Rep^0(Q)$ be the full $C^*$-subcategory of $\RepL(Q)$ consisting of  dyslectic $Q$-modules. By considering each object of $\Rep^0(Q)$ as $Q-Q$ bimodules, $\Rep^0(Q)$ becomes naturally a braided $C^*$-tensor category: For each dyslectic $Q$-modules $(\mc H_i,\mu_i),(\mc H_j,\mu_j)$, we choose an dyslectic $Q$-module $\mc H_{ij}$ and a surjective left $Q$-module morphisms 
\begin{align}
\mu_{i,j}: \mc H_i\boxtimes\mc H_j\rightarrow \mc H_{ij}\equiv\mc H_i\boxtimes_Q\mc H_j	\label{eq26}
\end{align}
satisfying 
\begin{gather}
\mu_{i,j}(\mu_i\boxtimes\id_j)=\mu_{i,j}(\id_i\boxtimes\mu_j)(\ss_{a,i}\boxtimes\id_j),\label{eq29}\\
\mu_{i,j}^*\mu_{i,j}=(\mu_i\ss_{i,a}\boxtimes\id_j)(\id_i\boxtimes\mu_j^*)\equiv(\id_i\boxtimes\mu_j)((\mu_i\ss_{i,a})^*\boxtimes\id_j).\label{eq30}
\end{gather}
For instance, we may  take $\mc H_i\boxtimes_Q\mc H_j$ to be the range of the right hand side of \eqref{eq30}. The fusion $F\boxtimes_Q G$ of dyslectic $Q$-module morphisms  $F:\mc H_i\rightarrow\mc H_{i'},G:\mc H_j\rightarrow\mc H_{j'}$ is determined by
\begin{align}
\mu_{i,j}(F\boxtimes G)=(F\boxtimes_Q G)\mu_{i,j}.	
\end{align}
The unitors of $\Rep^0(Q)$ are determined by the fact that after identifying the three dyslectic $Q$-modules $\mc H_a\boxtimes_Q \mc H_i,\mc H_i,\mc H_i\boxtimes_Q\mc H_a$ using the unitors, then $\mu_i$ equals $\mu_{a,i}$ (as described in \eqref{eq26}) and $\mu_i\ss_{i,a}$ equals $\mu_{i,a}$. The associativity isomorphisms are determined by the fact that, after suppressing these isomorphisms, for each dyslectic $Q$-modules $\mc H_i,\mc H_j,\mc H_k$, the following diagram commutes adjointly
\begin{equation}
	\begin{tikzcd}
		\quad \mc H_i\boxtimes\mc H_k\boxtimes\mc H_j\quad \arrow[rr,"{\id_i\boxtimes\mu_{k,j}}"] \arrow[d, "{\mu_{i,k}\boxtimes\id_j}"'] &&\quad \mc H_i\boxtimes(\mc H_k\boxtimes_Q\mc H_j)\quad \arrow[d, "{\mu_{i,kj}}"]\\
		(\mc H_i\boxtimes_Q\mc H_k)\boxtimes\mc H_j\arrow[rr,"{\mu_{ik,j}}"] &&\mc H_i\boxtimes_Q\mc H_k\boxtimes_Q\mc H_j
	\end{tikzcd}
\end{equation}
where each arrow denotes the corresponding fusion product morphism as in \eqref{eq26}. See \cite[Sec. 3.2, 3.4]{Gui19}, especially (3.10) and (3.11) for the above diagram. Finally, the braiding $\ss_{i,j}^Q\in\HomL_Q(\mc H_i\boxtimes_Q\mc H_j,\mc H_j\boxtimes_Q\mc H_i)$ is determined by
\begin{align}
\mu_{j,i}\ss_{i,j}=\ss^Q_{i,j}\mu_{i,j}.	
\end{align}

Assume now that $\mc B:=\mc B_Q$ is an irreducible local M\"obius extension of $\mc A$. Let $\Rep(\mc B)$ be the $C^*$-category of $\mc B$-modules (i.e., dyslectic $(\mc B,\mc A)$-modules), which is a full $C^*$-subcategory of $\RepL(\mc B)$. Then one can use Connes fusion to make $\Rep(\mc B)$ a braided $C^*$-tensor category $(\Rep(\mc B),\boxtimes_{\mc B},\ss^{\mc B})$  as in Sec. \ref{lb18} for $\RepA$. In particular, this braided $C^*$-tensor structure is determined by the existence of a categorical extension $\scr E^{\mc B}$ on the $C^*$-category $\Rep(\mc B)$. We let $L^{\mc B},R^{\mc B}$ denote the $L$ and $R$ operators of $\scr E^{\mc B}$. As in the previous sections, we do not write superscripts for the $L$ and $R$ operators of the categorical extension of $\mc A$ over $\RepA$.

Given a $\mc B$-module $\mc H_i$, recall that $\mc H_i(I)=\Hom_{\mc A}(\mc H_i,\mc H_0)\Omega$. Similarly, we define $\mc H_i^{\mc B}(I)=\Hom_{\mc B}(\mc H_i,\mc H_a)\iota\Omega$.
\begin{lm}
We have $\mc H_i(I)=\mc H_i^{\mc B}(I)$.
\end{lm}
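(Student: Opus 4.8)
The plan is to prove the two inclusions $\mc H_i^{\mc B}(I) \subseteq \mc H_i(I)$ and $\mc H_i(I) \subseteq \mc H_i^{\mc B}(I)$ separately, using Haag duality for $\mc A$ and the relative locality of $(\mc B,\mc A)$-modules (Rem.~\ref{lb17}), together with the fact that $\iota\Omega$ is cyclic and separating for $\mc B(\wtd I)$.

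For the inclusion $\mc H_i^{\mc B}(I) \subseteq \mc H_i(I)$, I would take $T \in \Hom_{\mc B}(\mc H_i, \mc H_a)$ and observe that, via the extension property \eqref{eq6}, $T$ intertwines the $\mc A(I')$-actions on $\mc H_i$ and $\mc H_a$; hence $T \in \Hom_{\mc A(I')}(\mc H_i,\mc H_a)$. The point is then to pass from the vector $T\iota\Omega \in \mc H_a$ to a vector in $\mc H_i$. Since $\iota \in \Hom_{\mc A}(\mc H_0,\mc H_a)$, the adjoint $\iota^*$ intertwines all of $\mc A$, so $\iota^* T \in \Hom_{\mc A(I')}(\mc H_i, \mc H_0)$, and $\iota^* T\iota\Omega = T^*\iota^*\iota\,\Omega$ — here I want $(T\iota\Omega$ expressed as $S\Omega$ for $S \in \Hom_{\mc A(I')}(\mc H_0,\mc H_i)^*$. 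Concretely, $S := (\iota^* \circ (\text{adjoint of }T))$ won't quite land in $\mc H_i$; the cleaner route is: $T^*: \mc H_a \to \mc H_i$ lies in $\Hom_{\mc A(I')}(\mc H_a,\mc H_i)$, hence $T^*\iota \in \Hom_{\mc A(I')}(\mc H_0,\mc H_i)$, and $T^*\iota\,\Omega \in \mc H_i(I)$ by definition. So I should first check that $\mc H_i^{\mc B}(I)$ can be rewritten using adjoints — i.e.\ that $\Hom_{\mc B}(\mc H_i,\mc H_a)\iota\Omega = \{ T^*\iota\Omega : T \in \Hom_{\mc B}(\mc H_a,\mc H_i)\}$, which holds because $\Hom_{\mc B}$ is $*$-closed and $\iota\Omega$ is fixed. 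Then $T^*\iota \in \Hom_{\mc A(I')}(\mc H_0,\mc H_i)$ gives $T^*\iota\Omega \in \mc H_i(I)$ immediately.

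For the reverse inclusion $\mc H_i(I) \subseteq \mc H_i^{\mc B}(I)$, this is the main obstacle: given $\xi = S\Omega$ with $S \in \Hom_{\mc A(I')}(\mc H_0,\mc H_i)$, I need to produce $T \in \Hom_{\mc B}(\mc H_a,\mc H_i)$ with $T\iota\Omega = \xi$. The natural candidate is $T := \mu_i L(\xi,\wtd I)|_{\mc H_a}$ composed appropriately, or better, to use the categorical structure: by Prop.~\ref{lb1} and the identification of $\mc B_Q$-module structure with left $Q$-module structure (Thm.~\ref{lb6}), the operator $\mu_i L(\xi,\wtd I)(\id_a \boxtimes \iota)|_{\mc H_a}$, i.e.\ essentially $\mu_i L(\xi,\wtd I)\iota$, should be an intertwiner. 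The cleanest approach: since $\xi \in \mc H_i(I)$, by the fusion relations we have $L(\xi,\wtd I)|_{\mc H_a} \in \Hom_{\mc A(I')}(\mc H_a, \mc H_i \boxtimes \mc H_a)$; composing with $\mu_i$ (which is an $\mc A$-module map, hence intertwines $\mc B_Q(\wtd I)$ by the argument in the proof of Main Thm.~\ref{lb12}) yields an operator that intertwines both $\mc A(I')$ and $\mc B_Q(\wtd I)$, hence lies in $\Hom_{\mc B}(\mc H_a,\mc H_i)$ by the Remark after Main Thm.~\ref{lb12} (which identifies $\Hom_{\mc B}$ with operators intertwining $\mc A(I')$ and $\mc B(\wtd I)$ — but this needs $\mc H_a,\mc H_i$ dualizable, which holds here). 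Finally I check $\mu_i L(\xi,\wtd I)\iota\Omega = \mu_i L(\xi,\wtd I)(\id_a\boxtimes\iota)\Omega = \mu_i(\id_a\boxtimes\iota)L(\xi,\wtd I)\Omega$; but $L(\xi,\wtd I)\Omega = \xi$ viewed in $\mc H_i\boxtimes\mc H_0$ — I want this to land back as $\xi \in \mc H_i$ after applying $\mu_i(\id_a\boxtimes\iota)$, which should follow from the unit property $\mu_i(\iota\boxtimes\id_i) = \id_i$ after juggling the unitors; I need to verify that $\mu_i(\id_a\boxtimes\iota)$ restricted appropriately is the identity, or adjust by composing with a unitor. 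This final identity-check is where I expect to spend the most care.

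In summary, both inclusions reduce to the interplay between $\Hom_{\mc A(I')}$-intertwiners (from Haag duality and relative locality) and $\Hom_{\mc B}(\wtd I)$-intertwiners, and the bridge is the characterization of $\Hom_{\mc B}$ morphisms as joint $(\mc A(I'), \mc B(\wtd I))$-intertwiners together with the $L$-operator formulas $X\iota = L(X\iota\Omega,\wtd I)$ from the proof of Thm.~\ref{lb11}. The hardest part is the second inclusion — specifically, confirming that the operator built from $\mu_i L(\xi,\wtd I)$ genuinely intertwines the full algebra $\mc B_Q(\wtd I)$ (not merely $\mc A$) and lands the vacuum vector $\iota\Omega$ back on $\xi$.
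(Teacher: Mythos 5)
Your first inclusion is fine and is essentially the paper's one-line argument: composing a $\mc B$-intertwiner with $\iota$ (or taking adjoints, since the morphism spaces are $*$-closed) gives an $\mc A(I')$-intertwiner $\mc H_0\rightarrow\mc H_i$, so $\mc H_i^{\mc B}(I)\subset\mc H_i(I)$. But before the hard inclusion, note what $\mc H_i^{\mc B}(I)$ must mean: in analogy with $\mc H_i(I)=\Hom_{\mc A(I')}(\mc H_0,\mc H_i)\Omega$, it is the set of vectors $T\iota\Omega$ where $T:\mc H_a\rightarrow\mc H_i$ intertwines only the single algebra $\mc B(I')$. If you read it as intertwiners of the \emph{whole} net $\mc B$, as your second inclusion does, the statement becomes false in general: for an irreducible $\mc B$-module $\mc H_i$ not isomorphic to $\mc H_a$ one has $\Hom_{\mc B}(\mc H_a,\mc H_i)=0$, while $\mc H_i(I)$ is dense. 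So your plan of producing, for each $\xi\in\mc H_i(I)$, a global $\mc B$-morphism cannot work and is also more than what is needed.

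In the hard inclusion there are two genuine gaps. First, your candidate $\mu_i L(\xi,\wtd I)|_{\mc H_a}$ does not typecheck: $L(\xi,\wtd I)|_{\mc H_a}$ lands in $\mc H_i\boxtimes\mc H_a$, whereas $\mu_i$ is defined on $\mc H_a\boxtimes\mc H_i$. The correct operator is $T=\mu_i R(\xi,\wtd I)|_{\mc H_a}$ (equivalently $\mu_i\ss_{i,a}L(\xi,\wtd I)|_{\mc H_a}$), for which $T\iota\Omega=\mu_i(\iota\boxtimes\id_i)R(\xi,\wtd I)\Omega=\xi$ by functoriality, state--field correspondence and the unit property $\mu_i(\iota\boxtimes\id_i)=\id_i$; with your $L$-based version the unit property is not applicable, which is exactly the difficulty you flagged at the end. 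Second, and more seriously, the crucial fact that $T$ intertwines the actions of $\mc B(\wtd K)$, where $\wtd K$ is the anticlockwise complement of $\wtd I$ (so $K=I'$), is never verified. Your appeal to the Remark after Main Thm.~\ref{lb12} is not available here: the identification of $\Hom_{\mc B_Q}$ with joint $(\mc B_Q(\wtd I),\mc A(I'))$-intertwiners there requires $\mc H_i$ to be dualizable and M\"obius covariant as an $\mc A$-module, neither of which is assumed for a general $\mc B$-module (the paper deliberately avoids these hypotheses); and even if it applied, it would be aiming at global intertwining, which as noted above fails. What actually does the work is the adjoint-commuting diagram \eqref{eq27} (locality of the categorical extension together with associativity/Frobenius for $\mu_i$): for $\zeta\in\mc H_a(K)$ it gives $\mu_i R(\xi,\wtd I)\,\mu L(\zeta,\wtd K)|_{\mc H_a}=\mu_i L(\zeta,\wtd K)\,\mu_i R(\xi,\wtd I)|_{\mc H_a}=\pi_{i,\wtd K}\big(\mu L(\zeta,\wtd K)|_{\mc H_a}\big)\,\mu_i R(\xi,\wtd I)|_{\mc H_a}$, i.e.\ exactly the local intertwining needed to conclude $\xi\in\mc H_i^{\mc B}(I)$. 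Without this step your argument for the second inclusion does not go through.
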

\begin{proof}
$\mc H_i(I)\supset\mc H_i^{\mc B}(I)$ since $\Hom_{\mc A}(\mc H_i,\mc H_0)\supset\Hom_{\mc B}(\mc H_i,\mc H_a)\iota$. Now choose any $\eta\in\mc H_i(I)$. We have $\mu_i R(\eta,\wtd I)\iota\Omega=\mu_i(\iota\boxtimes\id_i)R(\eta,\wtd I)\Omega=\eta$. Moreover, let $\wtd K$ be the anticlockwise complement of $\wtd J$. Then $\mu_i R(\eta,\wtd I)|_{\mc H_a}$ intertwines the actions of $\mc B(\wtd K)$ since for each $\xi\in\mc H_a(K)$, 
\begin{align*}
	\mu_i R(\eta,\wtd I)\mu L(\xi,\wtd K)|_{\mc H_a}=\mu_i L(\xi,\wtd K)\mu_i R(\eta,\wtd I)|_{\mc H_a}	
\end{align*}
by the diagram \eqref{eq27}. So $\eta\in \mc H_i^{\mc B}(I)$.
\end{proof}

By Prop. \ref{lb22}, the $*$-functor $\fk F:\RepL(Q)\rightarrow\Rep(\mc B,\mc A)$ in Main Thm. \ref{lb12} reduces to an isomorphism of $C^*$-categories $\fk F:\Rep^0(Q)\rightarrow\Rep(\mc B)$. Moreover:

\begin{Mthm}\label{lb24}
Assume that $\mc B:=\mc B_Q$ is an irreducible local M\"obius extension of $\mc A$. Then the $*$-functor $\fk F$ can be extended to a braided $*$-functor $(\fk F,\Phi):\Rep^0(Q)\rightarrow\Rep(\mc B)$ implementing an isomorphism of braided $C^*$-tensor categories. More precisely: We have an operation $\Phi$ associating to any dyslectic $Q$-modules (equivalently, $\mc B$-modules) $\mc H_i,\mc H_j$ a unitary $\mc A$-module morphism
\begin{align}
\Phi_{i,j}:	\mc H_i\boxtimes_{\mc B}\mc H_j\rightarrow\mc H_i\boxtimes_Q\mc H_j
\end{align}
satisfying that for any $\wtd I\in\Jtd,\xi\in\mc H_i(I),\eta\in\mc H_j$,
\begin{gather}
\begin{gathered}
\Phi_{i,j}L^{\mc B}(\xi,\wtd I)\eta=\mu_{i,j}L(\xi,\wtd I)\eta,\\
\Phi_{j,i}R^{\mc B}(\xi,\wtd I)\eta=\mu_{j,i}R(\xi,\wtd I)\eta.
\end{gathered}	
\end{gather}
$\Phi$ is natural, namely, for morphisms of dyslectic $Q$-modules (equivalently, $\mc B$-modules) $F:\mc H_i\rightarrow\mc H_{i'},G:\mc H_j\rightarrow\mc H_{j'}$, we have $\Phi_{i,j}(F\boxtimes_{\mc B}G)=(F\boxtimes_QG)\Phi_{i,j}$. Then  the unitary equivalence of the associators, unitors, and braid operators of the two categories are implemented by $\Phi$.\footnote{The precise statement can be found in \cite{Gui21a} Thm. 3.10-(a,b,c); the $\boxtimes$ and $\boxdot$ in that theorem correspond respectively to the $\boxtimes_{\mc B}$ and $\boxtimes_Q$ here.}
\end{Mthm}

\begin{proof}
We identify $\Rep^0(Q)$ with $\Rep(\mc B)$ via $\fk F$ so that they can be viewed as the same $C^*$-category. By \cite[Thm. 3.10]{Gui21a}, it suffices to show the existence of a categorical extension $(\mc B,\Rep^0(Q),\boxtimes_Q,\mc H)$ over $\Rep^0(Q)$. For each $\mc B$-module $\mc H_i$, $\wtd I\in\Jtd$, and $\xi\in\mc H_i(I)$, we define the $L$ and $R$ operators acting on each dyslectic $\mc H_k$ to be
\begin{gather*}
L^Q(\xi,\wtd I)=\mu_{i,k} L(\xi,\wtd I):\mc H_k\rightarrow\mc H_i\boxtimes_Q\mc H_k,\\
R^Q(\xi,\wtd I)=\mu_{k,i}R(\xi,\wtd I):\mc H_k\rightarrow\mc H_k\boxtimes_Q\mc H_i.
\end{gather*}
We need to check that they satisfy the axioms in the definition of categorical extensions (cf. Def. \ref{lb23}). The locality axiom follows from the adjoint commutativity of the diagram
\begin{equation}
	\begin{tikzcd}
\mc H_k\arrow[rr,"{R(\eta,\wtd J)}"]\arrow[d,"{L(\xi,\wtd I)}"'] &&\mc H_k\boxtimes\mc H_j\arrow[rr,"\mu_{k,j}"]\arrow[d,"{L(\xi,\wtd I)}"']&&\mc H_k\boxtimes_Q\mc H_j\arrow[d,"{L(\xi,\wtd I)}"']\\
		\mc H_i\boxtimes\mc H_k\arrow[rr,"{R(\eta,\wtd J)}"]\arrow[d,"\mu_{i,k}"']&&\mc H_i\boxtimes\mc H_k\boxtimes\mc H_j\arrow[rr,"\id_i\boxtimes\mu_{k,j}"]\arrow[d,"\mu_{i,k}\boxtimes\id_j"']&&\mc H_i\boxtimes(\mc H_k\boxtimes_Q\mc H_j)\arrow[d,"\mu_{i,kj}"']\\
		\mc H_i\boxtimes_Q\mc H_k\arrow[rr,"{R(\eta,\wtd J)}"]&&\mc(	\mc H_i\boxtimes_Q\mc H_k)\boxtimes\mc H_j\arrow[rr,"\mu_{ik,j}"]&&\mc H_i\boxtimes_Q\mc H_k\boxtimes_Q\mc H_j 
	\end{tikzcd}
\end{equation}
in which $\xi\in\mc H_i(I),\eta\in\mc H_j(J)$ and $\wtd J$ is clockwise to $\wtd I$. In particular, by letting $\mc H_i$ or $\mc H_j$ be $\mc H_a$ and noticing that $\mu_{a,i}=\mu_i,\mu_{i,a}=\mu_i\ss_{i,a}$, we see that both $L^Q(\xi,\wtd I)$ and $R^Q(\xi,\wtd I)$ intertwine the actions of $\mc B(I')$. We now check the other axioms. Isotony is obvious.

Functoriality: For any $G\in\Hom_{\mc B}(\mc H_j,\mc H_{j'})$ and $\xi\in\mc H_i(I),\eta\in\mc H_j$, $(\id_i\boxtimes_Q G)\mu_{i,j}L(\xi,\wtd I)\eta=\mu_{i,j}(\id_i\boxtimes G)L(\xi,\wtd I)\eta=\mu_{i,j}L(\xi,\wtd I)G\eta$. A similar relation holds for the $R^Q$ operators.

State-field correspondence: $\mu_{i,a}L(\xi,\wtd I)\iota\Omega=\mu_i\ss_{i,a}L(\xi,\wtd I)\iota\Omega=\mu_i R(\xi,\wtd I)\iota\Omega=\mu_i(\iota\boxtimes\id_i)R(\xi,\wtd I)\Omega=\xi$, and similarly $\mu_{a,i}R(\xi,\wtd I)\iota \Omega=\xi$.

Density of fusion products: Because $\mu_{i,j}$ is surjective.

Braiding: $\ss^Q_{i,j}\mu_{i,j}L(\xi,\wtd I)\eta=\mu_{j,i}\ss_{i,j}L(\xi,\wtd I)\eta=\mu_{j,i}R(\xi,\wtd I)\eta$.
\end{proof}

\begin{rem}
In Main Thm. \ref{lb24}, it is clear that a dyslectic $Q$-module $\mc H_i$ is dualizable in $\Rep(\mc B)$ iff it is dualizable in $\Rep^0(Q)$. Using induced representations, it is easy to see that the latter is equivalent to that $\mc H_i$ is dualizable in $\RepA$. (See for instance \cite{KO02}, \cite{NY16}, or \cite[Thm. 3.18]{Gui19}.) Therefore, the braided $*$-functor $(\fk F,\Phi)$ in Main Thm. \ref{lb24} restricts to an isomorphism of braided $C^*$-tensor categories
\begin{align*}
(\fk F,\Phi):\Rep^{0,\mathrm d}(Q)\xrightarrow{\simeq}\Rep^{\mathrm d}(\mc B)
\end{align*}
where $\Rep^{0,\mathrm d}(Q)$ is category of all dyslectic $Q$-modules $\mc H_i$ that are dualizable as $\mc A$-modules, and  $\Rep^{\mathrm d}(\mc B)$ is the  category of all  dualizable $\mc B$-modules. Moreover, from \eqref{eq31} and the construction of $(\mc B,\mc A)$-modules from left $Q$-modules,  we see that $\mc H_i\in\Obj(\Rep^0(Q))$ is M\"obius covariant as an $\mc A$-module if and only if it is so as a $\mc B$-module. 
\end{rem}

	%%%%%%%%%%%%%%%%%%%%%%%%%%%%%%%%%%%%%%%%%%%%%%%%%%%%%%%%%
	
	%\newpage
	%$~$
	%\renewcommand\contentsname{} % the empty name
	
	%\begingroup
	%\let\clearpage\relax
	%\vspace{-2cm} % the removed space. Set as appropriate

	% Remove header of table of contents
	
	%%%%%%%%%%%%%%%%%%%%%%%%%%%%%%%%%%%%%%%%%%%%%%%%%%%%%%%

%\printindex	

\noindent {\small \sc Yau Mathematical Sciences Center, Tsinghua University, Beijing, China.}

\noindent {\textit{E-mail}}: binguimath@gmail.com\qquad bingui@tsinghua.edu.cn
\end{document}